\title{Product and Coproduct on Fixed Point Floer Homology of Positive Dehn Twists}
\author{Yuan Yao, Ziwen Zhao}
\date{}
\DeclareMathOperator{\CZ}{CZ}
\DeclareMathOperator{\ind}{ind}
\DeclareMathOperator{\Ver}{Ver}
\DeclareMathOperator{\proj}{proj}
\DeclareMathOperator{\diag}{diag}
\DeclareMathOperator{\Ker}{Ker}
\theoremstyle{plain}
\newtheorem{theorem}{Theorem}[section]
\newtheorem{lemma}[theorem]{Lemma}
\newtheorem{cor}[theorem]{Corollary}
\newtheorem{proposition}[theorem]{Proposition}
\newtheorem{definition}[theorem]{Definition}
\theoremstyle{remark}
\newtheorem{remark}[theorem]{Remark}
\newcommand{\R}{\mathbb{R}}
\newcommand{\Z}{\mathbb{Z}}
\begin{document}

\maketitle
\begin{abstract}
    We compute the product and coproduct structures on the fixed point Floer homology of iterations on the single Dehn twist, subject to some mild topological restrictions. We show that the resulting product and coproduct structures are determined by the product and coproduct on Morse homology of the complement of the twist region, together with certain sectors of product and coproduct structures on the symplectic homology of $T^*S^1$. The computation is done via a direct enumeration of $J-$holomorphic sections: we use a local energy inequality to show that some of the putative holomorphic sections do not exist, and we use a gluing construction plus some Morse-Bott theory to construct the sections we could not rule out.
\end{abstract}
\tableofcontents

\section{Introduction}
In this paper, we calculate the product and coproduct structures on fixed point Floer homologies of iterations of a single positive Dehn twist on a surface.

\subsection{Fixed point Floer homology}
In this section, we briefly review the definition of the fixed point Floer homology of a symplectomorphism on a compact surface. Let $(\Sigma,\omega_0)$ be a compact symplectic surface (possibly with boundary) and $\phi:\Sigma\to \Sigma$ a symplectomorphism. If $\partial \Sigma$ is nonempty, we further assume that near each boundary component of $\partial \Sigma$, we can identify an open neighborhood with\footnote{Here and throughout the paper, we use notations like $S^1_y$ and $[0,1]_t$ to indicate the coordinates we choose. For example $S^1_y$ denotes the circle $S^1$ with coordinates written as $y\in S^1$. Later on we will use the notation $[S^1_y]$ to denote the homology class generated by this circle whose coordinates we have chosen to be $y \in S^1$.} $((-\epsilon_i,0]_{x_i}\times S^1_{y_i}, dx_i\wedge dy_i)$, such that $\phi$ is the time-1 map of the Hamiltonian $H_i(x_i, y_i) = \theta_i x_i$ for a small irrational number $\theta_i$. Notice that under such assumptions, there are no fixed points of $\phi$ near $\partial \Sigma$. We assume that 
$\phi$ is nondegenerate, that is, for every fixed point $x$ of $\phi$, the linearization $d\phi_x$ does not have $1$ as an eigenvalue. The fixed point Floer homology is the homology of the chain complex ($CF_*(\Sigma, \phi)$, $\partial$), whose underlying module is generated over $\Z/2$ by all fixed points of $\phi$. 

The differential of $CF_*(\Sigma, \phi)$ is defined by counting $J-$holomorphic cylinders in the symplectization of the mapping torus of $\phi$. More precisely, for any symplectomorphism $\phi:\Sigma\to \Sigma$, the mapping torus $Y_\phi$ is defined as
\begin{equation}
    Y_\phi = [0,1]_t\times \Sigma/((1,p)\sim(0,\phi(p))).
\end{equation}

The mapping torus comes with a projection $\pi: Y_\phi\to S^1_t$, and the symplectic form $\omega_0$ induces a closed 2-form $\omega_\phi\in\Omega^2(Y_\phi)$ which restricts to $\omega_0$ on each fiber (to be more precise, $\omega_0$ pulls back to a closed 2-form on the product $[0,1]\times \Sigma$, and $\omega_\phi$ is the induced 2-form on the quotient space $Y_\phi$). The vector field $\partial_t$ on $[0,1]_t\times \Sigma$ descends to a vector field on $Y_\phi$, which we still denote by $\partial_t$. Notice that there is a one-to-one correspondence between fixed points of $\phi$ and closed orbits of $\partial_t$ that cover $S^1_t$ once. We will denote by $\gamma_x$ the closed orbit associated to a fixed point $x$ .

The projection $\pi$ now extends to $\R\times Y_\phi\to \R\times S^1$, and the fiberwise symplectic form $\omega_\phi$ extends to the symplectization as well. The symplectization of $Y_\phi$ is the 4-manifold $\R_s \times Y_\phi$ together with the symplectic form $ds\wedge dt+\omega_\phi$. An almost complex structure $J$ on $\R\times Y_\phi$ is called \emph{$\phi$-compatible}, if it is invariant under the natural $\R$-action, sends $\partial_s$ to $\partial_t$, sends $\text{ker } d\pi$ to itself, and that $\omega_\phi(\cdot, J\cdot)$ is a Riemannian metric on $\text{ker } d\pi$. Given 2 fixed points $x$, $y$ of $\phi$, we define the moduli space $\mathcal{M}^J_{x,y}$ to be
\begin{equation}
    \mathcal{M}^J_{x,y}:=\{u: \R_s \times S^1_t \to \R\times Y_\phi|\partial_s u+J\partial_t u=0; \lim_{s\to \infty} u(s,\cdot) =\gamma_x,\lim_{s\to -\infty} u(s,\cdot) =\gamma_y \}.
\end{equation}

Now if $\phi$ is monotone (we will clarify this notion later in Definition \ref{monotone def}), the differential on $CF_*(\Sigma, \phi)$ is defined by

\begin{equation}\label{count}
    \langle \partial x,y \rangle := \#_{\Z/2} (\mathcal{M}_{x,y}/\R).
\end{equation}

Here $\mathcal{M}_{x,y}/\R$ is the natural quotient of $\mathcal{M}_{x,y}$ induced by the $\R$-translation on $\R\times Y_\phi$. Recall that every $J$-holomorphic section $u$ has a \emph{Fredholm index}:
\begin{equation}
    \ind(u)=2c^\tau_1(u) + CZ_\tau(\gamma_x)- CZ_\tau(\gamma_y)
\end{equation}
where $\tau$ is a trivialization of $\ker d\pi$ over the periodic orbits $\gamma_x$ and $\gamma_y$, and $c_1^\tau(u)$ is the relative first Chern number\footnote{The relative first Chern number $c_1^\tau(u)$ is defined as follows: $u^*(\ker d\pi)$ is a symplectic bundle over the domain of $u$. One chooses a generic section $\xi$ of this bundle which on each end is nonvanishing and constant with respect to the trivialization $\tau$. $c_1^\tau(u)$ is defined to be the algebraic count of zeroes of $\xi$.}. For a generic almost complex structure $J$, the set of Fredholm index one $J$-holomorphic sections modulo the natural $\R$ action is a compact $0$-dimensional manifold, and $\#_{\Z/2} (\mathcal{M}^J_{x,y}/\R)$ denotes the mod 2 count of points in the moduli space. If $J$ is generic then $\partial^2=0$, and we will denote by $HF_*(\Sigma, \phi)$ the homology of $(CF_*(\Sigma, \phi),\partial)$. The homology $HF_*(\Sigma,\phi)$ is invariant under symplectic isotopies of $\phi$, see for example \cite{seidel2002symplectic}.

Fixed point Floer homology for a symplectomorphism of a surface has been computed in various cases, see for example \cite{dostoglou1994self,pozniak1994floer,seidel1996symplectic,gautschi2002floer,eftekhary2002floer,cotton2009symplectic,pedrotti2022fixed}.Fixed point Floer homology can also be viewed as a special case of periodic Floer homology, which was calculated for iterations of a Dehn twist in \cite{hutchings2005periodic}.

\subsection{Product and coproduct structures}
Under suitable monotonicity assumptions\footnote{Or with the use of Novikov rings.} (see Definition \ref{def: bundle monotone}), fixed point Floer homology is functorial,  in  the  sense  that  fiberwise  symplectic cobordisms with cylindrical ends induce morphisms between fixed point Floer homologies. In this paper, we focus on (completed) fiberwise symplectic cobordisms coming from the composition of 2 symplectomorphisms. We begin with a review of the concept of symplectic fiber bundles.

\begin{definition}
(\cite{seidel1997floer} Definition 7.1) Let $B$ be a smooth manifold. A symplectic fiber bundle $(E,\pi,\omega)$ over $B$ is a smooth proper submersion $\pi: E\to B$ together with a closed 2-form $\omega\in\Omega^2(E)$ such that the restriction of $\omega$ to any fiber is nondegenerate.
\end{definition}

The mapping torus $Y_\phi$, together with the natural projection $\pi: Y_\phi\to S^1$ and the 2-form $\omega_\phi$, is an example of a symplectic fiber bundle. If $\phi,\psi:\Sigma\to\Sigma$ are two symplectomorphisms, then there is a symplectic fiber bundle $(X,\pi_X, \omega_X)$ over the thrice punctured sphere $B_0$, which, near the three punctures, is symplectomorphic to $[0,\infty)\times Y_\phi$, $[0,\infty)\times Y_\psi$ and $(-\infty, 0]\times Y_{\psi\circ\phi}$ respectively. A more precise description will appear in section \ref{setup}. For now, let us observe that (under assumptions on monotonicity, see Definition \ref{def: bundle monotone}), such a bundle induces a morphism
\begin{equation}
    \bullet: HF_*(\Sigma, \phi)\otimes HF_*(\Sigma, \psi)\longrightarrow HF_*(\Sigma, \psi\circ\phi)
\end{equation}
This is what we call the \emph{product structure} of the fixed-point Floer homology. In particular, if we assume that $\psi$ is isotopic to the identity, then the product structure gives a $H_*(\Sigma;\Z_2)$ module structure on $HF_*(\Sigma,\phi)$. For computations of this module structure, see \cite{seidel1996symplectic,eftekhary2002floer,gautschi2002floer,cotton2009symplectic}. Similarly, one can define, under suitable conditions, the \emph{coproduct structure}:
\begin{equation}
    \Delta: HF_*(\Sigma, \psi\circ\phi)\longrightarrow HF_*(\Sigma, \phi)\otimes HF_*(\Sigma, \psi).
\end{equation}

Like the definition of the differential, the product and coproduct structures have geometric descriptions, this time by counting rigid pseudo holomorphic sections of the bundle $X\to B_0$ with appropriate asymptotes. Namely, if $x$, $y$, $z$ are fixed points of $\phi$, $\psi$ and $\psi\circ\phi$ respectively, and $J$ is a tame almost complex structure (see Section \ref{setup} for the definition) then the moduli space $\mathcal{M}^J_{x,y;z}$ is defined by:
\begin{equation}
    \mathcal{M}^J_{x,y;z} =\left\lbrace u: B_0\to X \;\middle|\;
  \begin{tabular}{@{}l@{}}
    $\pi_X\circ u$=id,\ $u$ is $J$-holomorphic, and\\
    $u$ is asymptotic to $\gamma_x$, $\gamma_y$ and $\gamma_z$ over the\\ three appropriate punctures.
   \end{tabular}
  \right\rbrace
\end{equation}
The product (under suitable monotonicity assumptions, see Definition \ref{def: bundle monotone}) on the chain level is now defined as:
\begin{equation}\label{cobordismcount}
    \langle x\bullet y, z\rangle = \#_{\Z/2}\mathcal{M}^J_{x,y;z}
\end{equation}
where $\#_{\Z/2}\mathcal{M}^J_{x,y;z}$ denotes the mod 2 count of Fredholm index $0$ sections for a generic almost complex structure. The coproduct structure is defined in a similar way.

\subsection{Main results}
In this paper, we calculate the product and coproduct structures of the fixed-point Floer homology of iterations of a single positive Dehn twist. We fix a symplectic surface (possibly with boundary) $(\Sigma,\omega_0)$ and a homologically nontrivial simple closed curve $\gamma\subset\Sigma$. We assume that $\gamma$ does not intersect $\partial \Sigma$ and choose a tubular neighborhood $N$ of $\gamma$ with coordinates $x\in (-\epsilon,1+\epsilon)$ and $y\in S^1=\mathbb{R}/\mathbb{Z}$, where $\omega_0=dx\wedge dy$. A (unperturbed) \emph{positive Dehn twist} along $\gamma$ is a symplectomorphism of $\Sigma$, which has the form
\begin{equation}
\phi_0: (x,y)\mapsto (x,y-x)
\end{equation}
inside $N$, and is the time-1 map of a Morse function $H_0$ outside of $N'=[\epsilon,1-\epsilon]\times S^1\subset N$. To break the degeneracy of the fixed points of $\phi_0$, we perturb $\phi_0$ near each of the $S^1$-family of orbits, breaking the Morse-Bott degeneracy, and denote by $\phi$ the perturbed twist. We further assume, as before, that near each boundary component of $\Sigma$, there are tubular coordinates $x_i\in(-\epsilon_i,0]$, $y_i\in S^1$ and a small real number $\theta_i$ such that $H_0(x_i,y_i)=\theta_i x_i$ (later we will impose some other conditions on $H_0$, see Section \ref{setup}).

Let us assume for now that $\partial \Sigma \ne \emptyset$ or the genus of $\Sigma$ is at least 2. Let $\Sigma_0$ denote $\Sigma-N'$. It was shown, for example in \cite{seidel1996symplectic} or \cite{hutchings2005periodic}, that for all positive integers $m$,
\begin{equation}\label{decomposition}
HF_*(\phi^m)\cong H_*(\Sigma_0;\Z_2)\oplus (\oplus_{i=1}^{m-1} H_*(S^1)).
\end{equation}

The isomorphism can be understood as follows. With our assumption on $\phi$, the chain complex $CF(\phi^m)$ is generated by two types of fixed points: those corresponding to the critical points of $H_0$ on $\Sigma_0$ and those from the breaking of the Morse-Bott $S^1$-family of the fixed points of $\phi_0^m$ inside $N'$. As it turns out, the two types of fixed points generate subcomplexes of $CF(\phi^m)$, whose homologies correspond to the summands of the right-hand side of equation (\ref{decomposition}). Let $m$ and $n$ be two positive integers. With the previously described isomorphism understood, let $\proj$ denote the projection map $HF_*(\phi^m) \to H_*(\Sigma_0;\Z_2)$, let $\cap$ denote the intersection product on $H_*(\Sigma_0;\Z_2)$, and let $\iota$ denote the inclusion map $H_*(\Sigma_0;\Z_2)\to HF_*(\phi^{m+n})$. Our first result is the following:

\begin{theorem}\label{product for multiple twists}
Suppose
\begin{itemize}
    \item If $\gamma$ is non-separating, then $\partial \Sigma \ne \emptyset$ or $\Sigma$ is closed with genus at least $2$;
    \item If $\gamma$ is separating, then each component of $\Sigma-\gamma$ either contains a component of $\partial\Sigma$ or has genus at least 2.
\end{itemize}
Then the product of $HF(\phi^m)$ and $HF(\phi^n)$ is the composition of
\begin{equation}
HF_*(\phi^m)\otimes HF_*(\phi^n)\xrightarrow{\proj\otimes\proj}H_*(\Sigma_0;\Z_2)\otimes H_*(\Sigma_0;\Z_2)\xrightarrow{\quad \cap\quad}H_*(\Sigma_0;\Z_2)\xrightarrow{\quad\iota\quad}HF_*(\phi^{m+n}).
\end{equation}
\end{theorem}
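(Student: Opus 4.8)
The plan is to compute the count $\langle x \bullet y, z\rangle = \#_{\Z/2}\mathcal{M}^J_{x,y;z}$ by a careful case analysis on the types of the three fixed points $x$ (for $\phi^m$), $y$ (for $\phi^n$), $z$ (for $\phi^{m+n}$), exploiting that each fixed point is either of \emph{exterior type} (a critical point of $H_0$ on $\Sigma_0$) or of \emph{interior type} (coming from the perturbed Morse--Bott $S^1$-families inside $N'$). First I would set up the geometry of the bundle $X\to B_0$ with fiber $\Sigma$, choosing the almost complex structure $J$ so that it respects the decomposition $\Sigma = \Sigma_0 \cup N'$: over the exterior region $\Sigma_0$ the section equation should reduce essentially to a Morse-theoretic gradient-flow problem for $H_0$ on a pair-of-pants domain, so that the contribution to the product from exterior generators is exactly the Morse-theoretic intersection/cup product on $H_*(\Sigma_0;\Z_2)$ (this is where $\cap$, $\proj$, and $\iota$ come from). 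The bulk of the work is to show that all other contributions — those where at least one input or the output is an interior generator, and those where a holomorphic section wanders substantially into the twist region $N'$ — either vanish or cancel.

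The key mechanism for the vanishing statements is the \emph{local energy inequality} advertised in the abstract: I would analyze $J$-holomorphic sections restricted to the twist region, where the geometry is the standard linear twist $(x,y)\mapsto(x,y-x)$ on an annulus, and its mapping torus over a pair of pants is a very explicit model. The idea is that a holomorphic section passing through $N'$ must pick up a definite amount of energy governed by the winding/linking of its asymptotic orbits around the core $\gamma$, and if the index is forced to be $0$ while the asymptotics demand positive ``twisting energy,'' no such section can exist. Concretely I would: (i) compute the Fredholm index formula $\ind(u) = 2c_1^\tau(u) + CZ$-terms for each combination of asymptotic orbit types, identifying exactly which combinations are index $0$; (ii) for the index-$0$ combinations that involve interior orbits, use the energy inequality to rule them out, except possibly for a thin collar of cases near the boundary of $N'$; (iii) for the cases not ruled out, invoke the gluing construction plus Morse--Bott analysis (degenerating the perturbation of $\phi_0$ back to the $S^1$-Morse--Bott family and using the known structure of sections of the mapping torus of the linear twist, i.e.\ the $T^*S^1$ model) to count them, and check that in the iterated-twist range $m,n\geq 1$ these counts conspire so that no interior generator appears in the image of the product and no interior generator maps nontrivially. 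The hypothesis on $\gamma$ (non-separating with $\partial\Sigma\neq\emptyset$ or genus $\geq 2$, or the separating analogue) enters precisely to guarantee $\gamma$ is homologically nontrivial in a strong enough sense that the relevant relative homology/winding bookkeeping — and the identification \eqref{decomposition} itself — holds, and to rule out spurious sphere or disk bubbling; I would also need the monotonicity hypotheses from Definition~\ref{def: bundle monotone} to guarantee compactness of the index-$0$ moduli spaces and well-definedness of the count.

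I expect the main obstacle to be step (iii): constructing and correctly counting the $J$-holomorphic sections that the energy inequality cannot exclude. The difficulty is that the perturbation breaking the Morse--Bott degeneracy interacts subtly with the pair-of-pants geometry, so one must run a Morse--Bott gluing argument in which the relevant moduli space of broken configurations consists of a genuine holomorphic section of the ``linear'' model bundle (living over $T^*S^1$, whose symplectic-homology product/coproduct sectors are the ``certain sectors'' referenced in the abstract) together with Morse trajectories on the critical $S^1$-families, and one must show the glued count is what is claimed. A secondary obstacle is ensuring the compactness/transversality package is clean: one must rule out breaking into configurations with intermediate orbits, and must choose $J$ generically while keeping it of the split form needed for the Morse-theoretic identification over $\Sigma_0$ — these two demands are in tension and reconciling them (likely via a domain-dependent or region-dependent choice of $J$, or by a neck-stretching argument along $\partial N'$) is where the technical care is concentrated. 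Once these are in place, assembling the three maps $\iota \circ \cap \circ (\proj\otimes\proj)$ is a bookkeeping exercise.
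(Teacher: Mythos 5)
Your overall geometric framework is on the right track (split the fiber into $\Sigma_0$ and the twist annulus; use a local energy inequality; identify the exterior contribution with the intersection product on $H_*(\Sigma_0;\Z_2)$ via PSS), but the heart of the matter is handled in a way that would not close up, and a couple of ingredients are misattributed.

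First, the paper's organizing result is not a case analysis by orbit type but a structural \emph{no-crossing dichotomy}: for index-$0$ sections, the wrapping number vanishes (Theorem \ref{eta=0 lemma}), and then the local energy inequality together with a homological bookkeeping argument (Lemma \ref{no crossing lemma}, upgraded to general $J$ by SFT compactness in Theorem \ref{classification for general J}) shows that every such section lies \emph{entirely} in $X_H$ or \emph{entirely} in $X_D$. The genus/boundary hypothesis on $\gamma$ enters precisely at this step: via the relative-Chern-number formula it forces $\eta(C)=0$ when $\ind(C)=0$, and separately it gives weak monotonicity (Lemma \ref{cobordism monotone}) for compactness. It is not primarily about ruling out sphere/disk bubbling (that is handled by $\pi_2=0$), nor about the validity of the decomposition \eqref{decomposition}.

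Second, and more seriously, your steps (ii)–(iii) have a genuine gap. The local energy inequality does \emph{not} rule out index-$0$ sections with interior asymptotes: such sections do exist, confined by Lemma \ref{more in twist} to a thin $\delta$-neighborhood of a single slice $F_{i/d}$. You then propose to count them by a Morse--Bott gluing argument and ``check that the counts conspire'' to give zero — but you give no reason why they would, and in fact the Morse--Bott/gluing enumeration you sketch is exactly the machinery the paper deploys for the \emph{coproduct}, where those twist-region contributions are genuinely nonzero. For the \emph{product}, the paper instead uses a short ``translation trick'' (equation \eqref{eq: translation trick}): the fiberwise diffeomorphism $x' = x - i/d$, $y' = y + i\, g'_{m,n}(p)$ pulls $\omega_{X,0}$ back to itself and identifies the twist-region moduli problem with the pair-of-pants product for a small Hamiltonian $\tfrac{1}{2}x'^2$ on the open annulus $(-\delta,\delta)\times S^1$. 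By PSS this is the intersection product on $H_*((-\delta,\delta)\times S^1;\Z_2)$, which vanishes identically since the annulus is non-compact. Without this (or some substitute argument that actually forces the twist-region count to be zero), your proof does not terminate; with it, the Morse--Bott gluing you anticipate as the ``main obstacle'' is not needed at all for this theorem.

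Finally, a smaller point: you suggest reconciling generic $J$ with a split/Morse-theoretic $J$ over $\Sigma_0$ via a region-dependent almost complex structure or neck-stretching along $\partial N'$. The paper's route is different: work with $J$ that is merely $C^\infty$-close to a fibration-compatible one and prove no-crossing for such $J$ by a limiting (SFT compactness) argument, rather than by stretching the neck. Both are reasonable, but in the paper's setup the no-crossing statement already comes for free after the limiting argument, so no separate neck-stretching is needed.
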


The counterpart for the coproduct structure is more involved. Let us first make the following remark on the decomposition (\ref{decomposition}). The $i$-th component of $\oplus_{i=1}^{m-1} H_*(S^1)$ has an explicit description as follows: let $e^m_i$ (resp. $h^m_i$) be the elliptic (resp. hyperbolic) orbit of $\phi^m$ over the tubular coordinate $x=\frac{i}{m}$ ($i=0,1,\cdots, m$) that arises from perturbing the Morse-Bott degenerate $\phi$ to $\phi_0$ (see section \ref{setup}). Then $e_i^m$ and $h_i^m$ are cycles and the $i$-th component of $\oplus_{i=1}^{m-1} H_*(S^1)$ is spanned by the two homology classes $[e^m_i]$ and $[h^m_i]$. Let us denote by $[e^n_j]$, $[h^n_j]$, $[e^{m+n}_k]$, $[h^{m+n}_k]$ the homology classes appearing in the similar decomposition for $HF_*(\phi^n)$ and $HF_*(\phi^{m+n})$ respectively. Finally, let us recall that for any space $M$, a coproduct structure $\Delta_0$ on $H_*(M;\Z/2)$ is defined as the composition of $\diag_*:H_*(M;\Z/2)\to H_*(M\times M;\Z/2)$ and $H_*(M\times M;\Z/2)\cong H_*(M;\Z/2)\otimes H_*(M;\Z/2)$. Our second main result is the following:
\begin{theorem}\label{coproduct for multiple twists}
Suppose
\begin{itemize}
    \item If $\gamma$ is non-separating, then $\partial \Sigma \ne \emptyset$ or $\Sigma$ is closed with genus at least $2$;
    \item If $\gamma$ is separating, then each component of $\Sigma-\gamma$ either contains a component of $\partial\Sigma$ or has genus at least 2.
\end{itemize}
Then the coproduct $\Delta:HF_*(\phi^{m+n})\to HF_*(\phi^{m})\otimes HF_*(\phi^{n})$ described in the previous section is completely determined by the following:
\begin{enumerate}
    \item When restricted to $H_*(\Sigma_0;\Z_2)\subset HF_*(\phi^{m+n})$, $\Delta$ is equal to
    \begin{equation}\label{eq: coproduct1}
        H_*(\Sigma_0;\Z_2)\xrightarrow{\quad\Delta_0\quad}H_*(\Sigma_0;\Z_2)\otimes H_*(\Sigma_0;\Z_2)\longrightarrow HF_*(\phi^{m})\otimes HF_*(\phi^{n}).
    \end{equation}

    \item For each $[e^{m+n}_{k}]\in \oplus_{i=1}^{m+n-1}H_*(S^1)$, \begin{equation}\label{eq: coproduct2}
        \Delta([e^{m+n}_k])=\sum_{i\in\{0,1,\cdots,m\},\ k-i\in\{0,1,\cdots,n\}} [e^m_{i}]\otimes [e^n_{k-i}].
    \end{equation}
    \item For each $[h^{m+n}_{k}]\in \oplus_{i=1}^{m+n-1}H_*(S^1)$,
    \begin{equation}
        \Delta([h^{m+n}_{k}])=\sum_{i\in\{0,1,\cdots,m\},\ k-i\in\{0,1,\cdots,n\}} [e^m_{i}]\otimes [h^n_{k-i}]+[h^m_{i}]\otimes [e^n_{k-i}].
    \end{equation}
\end{enumerate}
\end{theorem}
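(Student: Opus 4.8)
The plan is to compute $\Delta$ by a direct enumeration of the rigid ($\ind=0$) $J$-holomorphic sections of $X\to B_0$ with one positive puncture asymptotic to an orbit of $\phi^{m+n}$ and two negative punctures asymptotic to orbits of $\phi^m$ and $\phi^n$. The first and organizing step is a \emph{localization} statement: every such section lies either entirely in the part of $X$ lying over $\Sigma_0$, where the bundle is (symplectically) essentially the trivial $\Sigma_0$-bundle and the sections are governed by the Morse theory of $H_0$, or entirely in the part of $X$ lying over the twist region $N'$. The tools are the fiberwise $J$-holomorphic hypersurfaces $\{x=c\}$ inside $N'$ (holomorphic for a tame $J$ respecting the product structure in the $x$-direction), positivity of intersections of $u$ with these hypersurfaces, and the local energy inequality, which says that crossing such a hypersurface costs a definite amount of energy. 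Concretely this should reduce to a maximum principle for the harmonic function $x\circ u$ on $B_0$: its asymptotic boundary values are the fixed $x$-coordinates of the three asymptotic orbits, so $x\circ u$ cannot wander outside the closed interval they span, and in particular cannot reach into $N'$ unless forced to by the asymptotics.

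For part (1): suppose the incoming orbit is of $\Sigma_0$-type, i.e.\ a critical point of $H_0$, hence of ``winding number zero'' around the core $\gamma$. The winding number around $\gamma$ is additive across the pair of pants, since a loop around the incoming puncture is homotopic in $B_0$ to the concatenation of loops around the two outgoing punctures; the $y$-monodromies of $\phi_0^{m+n},\phi_0^{m},\phi_0^{n}$ at the asymptotic $x$-values then force the relation $k=i+j$ among the winding numbers, so both outgoing orbits also have winding zero, i.e.\ are of $\Sigma_0$-type. By the localization step the section lies over $\Sigma_0$, and the count of rigid such sections is the standard Morse-theoretic pair-of-pants count, which computes the diagonal coproduct $\Delta_0$ on $H_*(\Sigma_0;\Z_2)$. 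Postcomposing with the inclusions of the $\Sigma_0$-type subcomplexes into $CF(\phi^m)$ and $CF(\phi^n)$ — the maps appearing implicitly in the decomposition \eqref{decomposition} — yields exactly \eqref{eq: coproduct1}.

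For parts (2) and (3): now the incoming orbit is $e^{m+n}_k$ or $h^{m+n}_k$, sitting at $x=k/(m+n)$, so by localization the section lives in the twist region. The same winding additivity forces the outgoing orbits to sit at $x=i/m$ and $x=j/n$ with $i+j=k$ (consistently, $k/(m+n)$ is the mediant of $i/m$ and $j/n$, so this also obeys the maximum principle for $x\circ u$), and $\ind(u)=0$ together with the Conley--Zehnder indices of elliptic versus hyperbolic orbits and the relative Chern number contributed by the twist then forces the elliptic/hyperbolic pattern in \eqref{eq: coproduct2} and in (3). It remains to show that each such compatible triple is realized by exactly one section modulo the relevant reparametrization, cut out transversally. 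I would establish this by first solving the linear model over $N'\cong T^*S^1$: before perturbing $\phi_0$ the orbits come in Morse--Bott $S^1$-families, and the holomorphic sections of the linear-twist bundle over $B_0$ form explicit finite-dimensional families whose count is the Chas--Sullivan-type loop coproduct on $H_*(LS^1)=\bigoplus_d H_*(S^1)$, which on each winding sector is the diagonal coproduct on $H_*(S^1)$ — this is the ``sector of $SH_*(T^*S^1)$'' input of the abstract. Perturbing $\phi_0$ to $\phi$ and running the standard Morse--Bott gluing analysis, each family over an $S^1$-family of orbits contributes according to the Morse theory of that $S^1$, so that $e$ plays the role of a point class and $h$ the role of the fundamental class, producing exactly the coefficients in \eqref{eq: coproduct2} and (3); a final gluing of these twist-region sections to the unique (trivial) section over $\Sigma_0$, together with a regularity check and surjectivity of the gluing onto the not-yet-excluded part of the moduli space, completes the argument.

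The main obstacle is this last step: the enumeration of holomorphic sections in the twist region and the gluing. The exclusion steps are soft — winding numbers, the maximum principle for $x\circ u$, and index parity — but producing precisely the claimed sections, and checking that their mod-$2$ count is $1$ rather than some even number that would vanish, is the delicate part. It requires either solving the $T^*S^1$ model problem by hand and identifying the answer with the loop-space coproduct, or controlling energy via the local energy inequality so that SFT-type compactness and Morse--Bott gluing apply and then matching coefficients; I expect the careful bookkeeping behind the single term $e^m_i\otimes e^n_{k-i}$ in $\Delta(e^{m+n}_k)$ and the two terms in $\Delta(h^{m+n}_k)$ to be where essentially all of the work lies. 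Consistency checks — that the resulting $\Delta$ is a coassociative chain map and is compatible with the product of Theorem~\ref{product for multiple twists} — can be used as a sanity check on the coefficients.
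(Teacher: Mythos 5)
Your high-level outline (localize, identify the $X_H$ piece with the Morse coproduct, enumerate twist-region sections, pass through a Morse--Bott correspondence) matches the structure of the actual proof, but several of the load-bearing steps are either stated incorrectly or left unaddressed.

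\textbf{Localization is not a maximum principle, and the genus hypothesis is never used.} You assert that localization follows from a maximum principle for a ``harmonic'' $x\circ u$. This is problematic on several counts. First, $x\circ u$ is not harmonic for the relevant $J$: for the fibration-compatible $J$ with $J\partial_x=\partial_y$ one finds from Lemma~\ref{dictionary to 2d} that $\Delta(x\circ u)= (x\circ u)_s G-(x\circ u)_t F$, a first-order perturbation (this still obeys a maximum principle, but the point is it is $J$-specific and breaks for the generic tame $J$ needed for transversality). Second, a maximum principle on the range of $x\circ u$ says nothing for sections whose asymptotics are $\Sigma_0$-type, where $x$ is not a coordinate. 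Third, and most importantly, nowhere in your argument do the hypotheses on genus or $\partial\Sigma$ enter. They are essential: the paper's localization is driven by showing the \emph{wrapping number} vanishes, and that vanishing comes from combining the Fredholm index formula, the identity $\langle c_1^\tau(\Ver),[C]\rangle = (2-2g)\eta(C)$ (or its separating analogue), the bound $\CZ_\tau\in\{-1,0,1\}$, and $g\ge 2$ (Theorem~\ref{eta=0 lemma} / Theorem~\ref{cobordism eta=0}). Once the wrapping number vanishes, the actual no-crossing step (Lemma~\ref{coproduct no crossing lemma}) is a homological bookkeeping argument in $H_1(X_D)$ using the local energy inequality, not a pointwise maximum principle; and passing from fibration-compatible $J$ to the generic tame $J$ required for transversality uses an SFT-compactness argument (Theorem~\ref{classification for general J}) that you have not mentioned.

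\textbf{There is no gluing of twist-region sections to $\Sigma_0$-sections.} Your ``final gluing of these twist-region sections to the unique (trivial) section over $\Sigma_0$'' does not reflect what happens. The localization is a dichotomy: every rigid section lies \emph{entirely} in $X_D$ or \emph{entirely} in $X_H$. The two classes of sections are counted separately and their contributions added; nothing is glued across the interface.

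\textbf{The twist-region enumeration is a genuinely different route, and it is left as a sketch exactly where the work is.} You propose to identify the twist-region contribution with the Chas--Sullivan loop coproduct on $H_*(LS^1)$ via $SH_*(T^*S^1)$. The paper explicitly acknowledges this as a plausible alternative (Remark in the introduction citing \cite{abbondandolo2010floer,cieliebak2009role}) but deliberately does \emph{not} take it, because no ready-made statement identifies the relevant sector of the pair-of-pants coproduct of a cotangent bundle with the coproduct on a fixed-point Floer twist sector in this asymptotic setup. Instead the paper determines the Morse--Bott moduli space directly: a translation trick (Lemma~\ref{shifting lemma}), automatic transversality so that all $\ind=1$ sections are regular (Lemma~\ref{automatic transversality}), a deformation of $\omega_{X,0}$ through admissible $1$-forms together with an energy bound (Lemma~\ref{vertical energy bound}) to show the moduli is cobordism-invariant (Corollary~\ref{same moduli}), an $S^1$-symmetry (Corollary~\ref{S1 symmetry}) and a uniqueness argument via the linear PDE (Lemma~\ref{uniqueness}) to show each moduli is empty or $S^1$, and an explicit branched-cover construction to show it is nonempty (Proposition~\ref{morse-bott moduli space}). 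Your proposal replaces all of this with ``solving the linear model by hand'' and ``matching with the loop coproduct,'' which is precisely the part you acknowledge is the hard part --- but it is not fleshed out, and you have not established the mod-$2$ count is $1$. Your elliptic/hyperbolic index bookkeeping alone cannot rule out an even (hence vanishing) count.

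So: the skeleton is right, but you have (i) an incorrect localization argument that never touches the genus hypothesis or the generic-$J$ issue, (ii) a spurious gluing step, and (iii) a twist-region enumeration that, while pointing toward a legitimate alternative strategy, is not carried out and does not pin down the actual coefficients.
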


\begin{remark}
It was asked in \cite{cotton2009symplectic} how one could get an understanding of the ring structure of $\oplus_{n=0}^\infty HF_*(\Sigma,\phi^n)$ for general $\phi$. Our results compute the algebra and co-algebra structure of $\oplus_{n>0}^\infty HF_*(\Sigma,\phi^n)$ when $\phi$ is the positive Dehn twist. One can use the computation in \cite{seidel1996symplectic} to calculate the case for $n=0$. 

The coproduct structure \footnote{We thank Tim Perutz for pointing out this fact to us.} on $ \oplus_{n\geq 0} HF(\phi^n)$ can be interpreted as the dual of the product structure on the ring $\oplus_{n\leq 0} HF(\phi^n)$ associated with the negative Dehn twist. Likewise, the product structure on $ \oplus_{n\geq 0} HF(\phi^n)$ can be interpreted as the dual of the coproduct structure on $\oplus_{n\leq 0} HF(\phi^n)$.
\end{remark}
\begin{remark}
The proofs in Section \ref{the product} and Section \ref{the coproduct} can be extended to the case where $\phi$ is composed of positive Dehn twists around $C_1\cup C_2\cdots\cup C_n$ where $C_i$ are disjoint and pairwise non parallel, and subject to the condition that each component of $\Sigma \setminus C_1 \cup C_2\cdots\cup C_n$ contains either a component of $\partial \Sigma$ or has genus at least 2. In that case for the product we should replace $\Sigma_0$ with $\Sigma \setminus C_1\cup\cdots\cup C_n$, and for the coproduct, in addition to the above replacement, the formulas (\ref{eq: coproduct1}) and ($\ref{eq: coproduct2}$) of $\Delta$ should be applied separately for each twist region corresponding to $C_i$. 
\end{remark}

\subsection{Strategy of the proof}
In this subsection, we summarize the key ideas behind the proof. As will be explained in more details in Section \ref{setup}, the symplectic fiber bundle $X$ computing the product or the coproduct can be decomposed into two pieces, which we will call $X_D$ and $X_H$. Roughly speaking, $X_D$ is the union of fibers where iterations of the Dehn twist take place, and $X_H$ is (an open neighborhood of) the complement. The first key observation is that, under mild assumptions on the almost complex structure $J$, any $J$-holomorphic section of $X$ that has wrapping number (see Section \ref{section:no_crossing}) $0$ must be completely contained in either $X_D$ or $X_H$. A key technical lemma used in the argument is the ``local energy inequality'' (Lemma \ref{local energy lemma}) that was inspired by Lemma 3.11 of \cite{hutchings2005periodic}, which is reproved in our setting in Section \ref{basic case}. We next observe that for $J$-holomorphic sections, Fredholm index being $0$ implies that the wrapping number is $0$. Thus we only need to focus on sections contained in one of the two pieces.

$J$-holomorphic sections that are contained in $X_H$ are relatively easy to understand: by results of \cite{PSS,frauenfelder2007hamiltonian,lanzat2016hamiltonian} (also known as the PSS isomorphism), these sections contribute to the intersection product or the coproduct of $H_*(\Sigma_0;\Z_2)$. 

Sections that are contained in $X_D$ are more interesting. When computing the product structure, we are again able to rule out most of them using the local energy inequality. For the remaining sections, we use a translation trick (equation (\ref{eq: translation trick})) together with the PSS isomorphism to conclude that they contribute zero. For the coproduct structure, sections contained in $X_D$ do make contributions. To understand the contributions of these sections, we give a concrete description of the (unperturbed) moduli spaces, see Proposition \ref{morse-bott moduli space}, whose proof involves a deformation argument and a concrete construction. Finally, a Morse-Bott correspondence theorem (explained in Section \ref{subsection: MB}) finishes the calculation.

\begin{remark}
For the computation of the coproduct structure, we expect that sections contained in the twist region $X_D$ could also be understood by results of \cite{abbondandolo2010floer,cieliebak2009role}. More precisely, we think that sections contained in $X_D$ could be viewed as part of curves counted in the product of the Floer homology of $T^*S^1$. By the results in
\cite{abbondandolo2010floer,cieliebak2009role}, the Chas-Sullivan loop product on the singular homology of the loop space of $S^1$ is expected to give us directly the coefficients in the coproduct structure. However, this method is not used in the current paper. Instead, we explicitly determine the Morse-Bott moduli spaces, with the expectation that such constructions will allow us to calculate similar cobordism maps for the periodic Floer homology in the future.
\end{remark}

\subsection{Directions for future work}
\textbf{General pairs of symplectomorphisms.} While in this paper we computed the product and coproduct structures for iterations of a single Dehn twist, it is interesting to investigate the same question for $\phi$, $\psi$ Dehn twists along different circles. In general we expect this to be hard, probably requiring techniques beyond this paper.

For a different direction, one could also investigate the situation where $\phi$, $\psi$ are of finite type. The fixed point Floer homology for finite type maps was computed in \cite{gautschi2002floer}. It might be possible that the separating results in the current paper still holds under certain conditions, yielding a computation for the cobordism maps.

\textbf{Periodic Floer homology.} Fixed point Floer homology can be viewed as the degree $d=1$ case of the periodic Floer homology (PFH), which is a more complicated Floer theory one can associate to a symplectomorphism of a surface. While the general construction of cobordism maps in PFH relies on Seiberg-Witten theory, special cases in which cobordism maps can be defined via counts of holomorphic curves or buildings have been worked out. For works in this direction, see for example \cite{chen2017cobordism} and \cite{rooney2018cobordism}. To generalize the results in the current paper, one could try to calculate the product and coproduct structures of PFH of a single Dehn twist over a surface. While getting a complete answer could be difficult, one could start by understanding the (multi-)sections contained in the twist region. We hope that the detailed analysis presented in the current paper could give a hint on what the moduli spaces should look like.

\subsection{Organization of the paper}
The rest of the paper is organized as follows. In Section \ref{setup} we review the relevant geometric setup in detail. In particular, we give an explicit description of the symplectic fiber bundles $X_{m,n}$ and $X^{m,n}$ that are used to define the cobordism maps. In Section \ref{basic case}, we follow an idea of \cite{hutchings2005periodic} to establish ``no crossing'' results for a special almost complex structure, Lemma \ref{no crossing lemma} and Lemma \ref{coproduct no crossing lemma}. In Section \ref{general no crossing}, we use an SFT compactness argument to prove Theorem \ref{classification for general J}, which is a generalization of the ``no crossing'' results in Section \ref{basic case} for a general almost complex structure $J$. Finally, in Section \ref{the product} and Section \ref{the coproduct}, we prove our main results, Theorem \ref{product for multiple twists} and Theorem \ref{coproduct for multiple twists}.

\subsection{Acknowledgements}
First and foremost we would like to thank our advisor Michael Hutchings for his consistent support and guidance. We would like to also thank Kiran Luecke and Yixuan Li for clarifications on the loop product. We would also like to thank Jo Nelson and Tim Perutz for helpful comments.

Yuan Yao would like to acknowledge the support of the Natural Sciences and Engineering Research Council of Canada (NSERC), PGSD3-532405-2019.
Cette recherche a été financée par le Conseil de recherches en sciences naturelles et en génie du Canada (CRSNG), PGSD3-532405-2019. Ziwen Zhao was partially supported by NSF grant DMS-2005437.

\section{The setup}\label{setup}
We fix a symplectic surface, possibly with boundary, $(\Sigma,\omega_0)$. Let $\phi$ be any symplectomorphism. 
Let $Y_\phi$ be the mapping torus of $\phi$, let $\pi$ be the projection $Y_\phi\to S^1$, and let $\omega_\phi$ be the induced closed two-form. The mapping torus has the structure of a stable Hamiltonian manifold, where the 1-form is $dt$, the 2-form is $\omega_\phi$ and the associated Reeb vector field is $R=\partial_t$. Closed integral curves of the Reeb vector field are called \emph{Reeb orbits}, and they are called \emph{nondegenerate} if the linearized first return map doesn't have 1 as an eigenvalue. We call a nondegenerate Reeb orbit \emph{hyperbolic} if the eigenvalues are real, and \emph{elliptic} otherwise. In this paper, we will be mainly interested in Reeb orbits of degree 1, that is, those who cover once under the projection map $\pi$.

We fix a homologically nontrivial simple closed curve $\gamma\subset\Sigma$. As mentioned in the introduction, we choose a tubular neighborhood $N$ of $\gamma$ with coordinates $x\in (-\epsilon,1+\epsilon)$ and $y\in S^1=\mathbb{R}/\mathbb{Z}$, and $\omega_0=dx\wedge dy$. The \textbf{ (unperturbed) positive Dehn twist} along $\gamma$ is a symplectomorphism of $\Sigma$, which has the form
\begin{equation}
\phi_0: (x,y)\mapsto (x,y-x)
\end{equation}
inside $N$, and is the time-1 map of a Hamiltonian $H_0$ outside of $N'=[\epsilon,1-\epsilon]\times S^1\subset N$.
We require $H_0$ takes the following form on $N \setminus N'$:
\begin{enumerate}
\item $H_0(x,y)=\frac{1}{2}x^2$ in $(-\epsilon,\epsilon)\times S^1\subset N$, and
\item $H_0(x,y)=\frac{1}{2}(x-1)^2$ in $(1-\epsilon,1+\epsilon)\times S^1\subset N$.
\end{enumerate}
We assume on $\Sigma \setminus N$, the function $H_0$ is a $C^2$ small Morse function, so that the associated time-1 map is non-degenerate.
We further assume that near each boundary component of $\Sigma$, there are tubular coordinates $x_i\in(-\epsilon_i,0]$, $y_i\in S^1$ and a small real number $\theta_i$ such that $H_0(x_i,y_i)=\theta_i x_i$. 

We note the unperturbed positive Dehn twist is non-degenerate, except for the Morse-Bott $S^1$ family of periodic orbits corresponding to $x=0$ and $x=1$. 
We shall later consider iterations of $\phi_0$, which we denote by $\phi_0^n$. By the above, on $N$, the map $\phi_0^n$ takes
\[
(x,y) \mapsto (x,y-nx)
\]
on $N$ and looks like $nH_0$ outside of $N'$. We assume both $nH_0$ (in $C^2$ norm) and $n\theta_i$ are small.

We note that in order to define the fixed-point symplectic homology, we need the symplectomorphisms to be nondegenerate (equivalently, that the Reeb orbits are cut out transversely). Since the symplectomorphism $\phi_0^n$ on $\Sigma-N$ is the time-1 map of a Hamiltonian $n H_0$, this is achieved outside of $N$ by requiring that $H_0$ be a $C^2$-small Morse function. Inside the tubular region $N$, Reeb orbits come in Morse-Bott $S^1$ families. Following \cite{hutchings2005periodic}, We overcome this technical difficulty by perturbing $\phi_0$ (in a small neighborhood of finitely many values of $x$ over which Reeb orbits exist) in a Hamiltonian way, which amounts to adding a Hamiltonian perturbation term. For example, near $x=0$, we can modify $H_0$ to be $(\frac{1}{2}x^2+\lambda(x) h(y)))$, where $\lambda(x)$ is a cutoff function supported in $(-\delta,\delta)_x$ with $\lambda(0)=1$ as a non-degenerate local max, and $h:S^1_y\to \mathbb{R}$ is a small perfect Morse function. We perform this kind of perturbation for each $S^1$ family of fixed points in $N$. We always assume that the Hamiltonian perturbation only takes place in the union of all intervals $(x_i-\delta,x_i+\delta)$ (where $x_i$'s are the $x$-coordinates for all possible Morse-Bott $S^1$-families) for some positive real number $\delta$ much smaller than $\epsilon$ (we'll later call the complement of these intervals the \emph{unperturbed range}). Once this is done, viewed from the perspectice of the mapping torus $Y_{\phi_0^n}$, the $S^1$ family of fixed points at $x=\frac{i}{n}$ become perturbed to a pair of Reeb orbits (one elliptic and one hyperbolic).

With the above \textbf{perturbed positive Dehn twist}, which we denote by $\phi^n$, we can define its fixed point Floer homology $HF(\Sigma,\phi^n)$ after we pick a generic $\phi^n$ compatible almost complex structure $J$ on $Y_{\phi^n}\times \mathbb{R}$.  We next describe the symplectic fiber bundle  that allows us to define product and coproduct structures on $HF(\Sigma,\phi^n)$. We first describe the construction for the unperturbed positive Dehn twists, then perturb to break the Morse-Bott degeneracy. The reason we describe the Morse-Bott situation in detail is because for the coproduct computation we will be enumerating $J$-holomorphic sections in the Morse-Bott setting, then we will use Morse-Bott theory to convert that to enumerations of $J$-holomorphic sections in the nondegenerate setting.

Recall that, given two symplectomorphisms, there is a symplectic fiber bundle $(X,\pi_X, \omega_X)$ over the thrice punctured sphere $B_0$, which is modelled by the symplectizations of mapping tori over the punctures. We now describe in more details what the bundle $X_{m,n}$ used in computing the product structure
\begin{equation}
  \bullet: HF_*(\phi^m)\otimes HF_*(\phi^n)\longrightarrow HF_*(\phi^{m+n})
\end{equation}
looks like. The description for the bundle $X^{m,n}$ used to compute the coproduct structure is almost identical, and we will mention at the end of this section what changes need to be made.

We designate two of the punctures of $B_0$ as ``positive'', and the other as ``negative''. Choose local conformal coordinates $s_i\in[0,\infty)$ and $t_i\in S^1 (i=1,2)$ near the 2 positive punctures of $B_0$, and local conformal coordinates $s_{-\infty}\in(-\infty,0]$, $t_{-\infty}\in S^1$ near the negative puncture. Fix also a smooth function $g_{m,n}:B_0\to S^1$ such that $dg_{m,n}=md t_1$ near the first positive puncture, $dg_{m,n}=nd t_2$ near the second positive puncture and $dg_{m,n}=(m+n)dt_{-\infty}$ near the negative puncture. Define the closed one-form $\beta_{m,n}=dg_{m,n}$.

Let $\Sigma_0=\Sigma-N'$. We now describe the fiberwise symplectic cobordism $X_{m,n}$ as the union of two fiberwise symplectic cobordisms $X_D$ and $X_H$ as follows. Topologically, $X_D=B_0\times N$ and $X_H=B_0\times \Sigma_0$. In order to describe $\phi_0$ as the time-1 map of the Hamiltonian $H_0$ near the two ends of the tubular region $N$ we choose coordinates $(p,x_L,y_L)\in B_0\times(-\epsilon,\epsilon)\times S^1$ and $(p,x_R,y_R)\in B_0\times(1-\epsilon,1+\epsilon)\times S^1$ for the two ends of $B_0\times (N-N')\subset X_H$ and impose that
\begin{enumerate}
\item $\omega_0=dx_L\wedge dy_L$ or $dx_R\wedge dy_R$ in the two components of $N-N'$,
\item $H_0(x_L,y_L)=\frac{1}{2}x_L^2$ in $(-\epsilon,\epsilon)\times S^1\subset N$, and
\item $H_0(x_R,y_R)=\frac{1}{2}(x_R-1)^2$ in $(1-\epsilon,1+\epsilon)\times S^1\subset N$.
\end{enumerate}

Topologically, the 4-manifold $X$ is defined to be $X=X_H\bigcup X_D/\sim$, where we identify points $(p,x,y)\in B_0\times(-\epsilon,\epsilon)_x\times S^1_y\subset X_D$ with $(p,x,y)\in B_0\times(-\epsilon,\epsilon)_{x_L}\times S^1_{y_L}\subset X_H$, and $(p,x,y)\in B_0\times(1-\epsilon,1+\epsilon)_x\times S^1_y\subset X_D$ with $(p,x,y+g_{m,n}(p))\in B_0\times(1-\epsilon,1+\epsilon)_{x_R}\times S^1_{y_R}\subset X_H$.

Now we define the fiberwise symplectic 2-form $\omega_{X,0}$ to be $dx\wedge dy+d(\frac{1}{2}x^2\beta_{m,n})$ in $X_D$, and $\omega_0+d(H_0\beta_{m,n})$ in $X_H$. It is easy to see that the two definitions agree in $X_D\supset B_0\times (-\epsilon,\epsilon)_x\times S^1_y=B_0\times (-\epsilon,\epsilon)_{x_L}\times S^1_{y_L}\subset X_H$. To see that the two definitions agree in $X_D\supset B_0\times (1-\epsilon,1+\epsilon)_x\times S^1_y=B_0\times (1-\epsilon,1+\epsilon)_{x_R}\times S^1_{y_R}\subset X_H$, we calculate
\begin{align}
\begin{split}
dx_R\wedge dy_R+ d(\frac{1}{2}(x_R-1)^2\beta_{m,n}) &= dx\wedge(dy+\beta_{m,n})+d(\frac{1}{2}(x-1)^2\beta_{m,n})\\
 &= dx\wedge dy+dx\wedge\beta_{m,n}+(x-1)dx\wedge\beta_{m,n}\\&= dx\wedge dy+d(\frac{1}{2}x^2\beta_{m,n}).
 \end{split}
\end{align}
We remark that over the positive punctures, the symplectic fiber bundle defined above are isomorphic to $[0,\infty)$ times the mapping tori $Y_{\phi_0^m}$, $Y_{\phi_0^n}$, and over the negative puncture, the above fiber bundle is modelled by $(-\infty, 0]$ times the mapping torus $Y_{\phi_0^{m+n}}$.

The above fiber bundle have Morse-Bott degeneracies in its Reeb orbits at each of its punctures. To arrive at the definition of $X_{m,n}$, we perturb the Reeb orbits to be non-degenerate as before. Since we are working in the language of a symplectic fiber bundle, we achieve this by adding a Hamiltonian perturbation term to $\omega_{X,0}=dx\wedge dy+d(\frac{1}{2}x^2\beta_{m,n})$.

As before, near $x=0$, we can modify $\omega_{X,0}$ to be $\omega_X=dx\wedge dy+d((\frac{1}{2}x^2+\lambda(x) h(y))\beta_{m,n})$, where $\lambda(x)$ is a cutoff function supported in $(-\delta,\delta)_x$ with $\lambda(0)=1$ as a nondegenerate local max, and $h:S^1_y\to \mathbb{R}$ is a small perfect Morse function. We assume as before that the Hamiltonian perturbation only takes place in the union of all intervals $(x_i-\delta,x_i+\delta)$ (where $x_i$'s are the $x$-coordinates for all possible Morse-Bott $S^1$-families) for some positive real number $\delta$ much smaller than $\epsilon$ (we'll later call the complement of these intervals the \emph{unperturbed range}). Once this is done, near the first (resp. the second) positive puncture, $x=\frac{i}{m}$ (resp. $x=\frac{j}{n}$) each correspond to a pair of Reeb orbits (one elliptic and one hyperbolic), and near the negative puncture $x=\frac{k}{m+n}$ each correspond to a pair of Reeb orbits (one elliptic and one hyperbolic).

This defines the fiberwise symplectic 2-form $\omega_X$, we now describe the symplectic structure on $X^{m,n}$.  It's illustrated in e.g. \cite{chen2017cobordism} that from the fiberwise symplectic cobordism $(X,\pi_X,\omega_X)$ one can construct a symplectic form $\Omega_X=\omega_X+K \pi_X^*\omega_B$ where $K$ is a large positive number, and $\omega_B$ is an area form on $B_0$. Without loss of generality, we assume from now on that $K \pi_X^*\omega_B=ds_i\wedge dt_i$ near the punctures of $B_0$. 

This concludes the definition of $X_{m,n}$, we now equip it with a tame almost complex structure $J$.

\begin{definition}
An almost complex structure on $(X_{m,n},\omega_X, \pi)$ is called tame if the following conditions are satisfied
\begin{enumerate}
    \item Near the punctures of $B_0$ where the symplectic fiber bundle is isomorphic to $Y_{\phi^n}\times [0,\infty)$ (resp. $Y_{\phi^m}\times [0,\infty)$ or $Y_{\phi^{m+n}}\times (-\infty,0]$), the almost complex structure is given by the restriction of a $\phi^n$ (resp. $\phi^m$, or $\phi^{m+n}$) compatible almost complex structure.
    \item Away from the cylindrical neighborhoods around the punctures of $B_0$, the almost complex structure $J$ is tamed by the symplectic form $\Omega_X$.
\end{enumerate}
\end{definition}

Then for generic tame $J$, if $x$, $y$, $z$ are fixed points of $\phi^n$, $\phi^m$ and $\phi^{m+n}$ respectively, the moduli space  $\mathcal{M}^J_{x,y;z}$, defined by 
\begin{equation}
    \mathcal{M}^J_{x,y;z} =\left\lbrace u: B_0\to X \;\middle|\;
  \begin{tabular}{@{}l@{}}
    $\pi_X\circ u$=id,\ $u$ is $J$- holomorphic, and\\
    $u$ is asymptotic to $\gamma_x$, $\gamma_y$ and $\gamma_z$ over the\\ three appropriate punctures.
   \end{tabular}
  \right\rbrace
\end{equation}
is a manifold whose dimension is given by the Fredholm index formula
\begin{equation}
\ind(u) = 1 + 2 \langle c_1^\tau(TX_{m,n}),[u]\rangle+  \CZ_\tau(\gamma_x)+\CZ_\tau(\gamma_y) - \CZ_\tau(\gamma_z).
\end{equation}
Here $\tau$ denotes a choice of fixed trivializations around each Reeb orbit, and $CZ_\tau$ denotes the Conley-Zehnder indices of Reeb orbits with respect to this trivialization. Similarly the relative first Chern class $c^\tau_1$ is also determined by this choice of trivialization. See Section \ref{the product} for our specific choices of $\tau$.
The product on the chain level is now defined as:
\begin{equation}\label{cobordismcount}
    \langle x\bullet y, z\rangle = \#_{\Z/2}\mathcal{M}^J_{x,y;z}
\end{equation}
where $\#_{\Z/2}\mathcal{M}^J_{x,y;z}$ denotes the mod 2 count of Fredholm index $0$ sections. See Section \ref{the product} for the details of this computation.

For the coproduct structure, the symplectic fiber bundle $X^{m,n}$ is defined almost verbatim, so we only highlight the minor changes that need to be made. We again begin with the thrice punctured sphere $B_0$, but this time choose one of the three punctures as the ``positive puncutre'' with a local conformal coordinate $(s_{\infty}, t_{\infty})\in [0,\infty)\times S^1$, and choose the other two punctures as the two ``negative'' punctures with local coordinates $(s_i,t_i)\in (-\infty, 0]\times S^1$. Fix a smooth function $g^{m,n}:B_0\to S^1$ such that $dg^{m,n}=(m+n)dt_{\infty}$ near the positive puncture and $dg^{m,n}=mdt_1$ and $ndt_2$ near the two negative punctures respectively. We again define $X^{m,n}$ by gluing two trivial fiber bundles $X_H=B_0\times \Sigma_0$ and $X_D=B_0\times N$, but this time the gluing map for the right side of $X_D$ is
\begin{equation}
B_0\times(1-\epsilon,1+\epsilon)_x\times S^1_y\ni (p,x,y)\sim (p,x,y+g^{m,n}(p))\in B_0\times(1-\epsilon,1+\epsilon)_{x_R}\times S^1_{y_R}
\end{equation}

Let $\beta^{m,n}=dg^{m,n}$. We similarly define the (unperturbed) fiberwise symplectic 2-form $\omega_{X,0}$ to be $dx\wedge dy+d(\frac{1}{2}x^2\beta_{m,n})$ in $X_D$ and $\omega_0+d(H_0\beta_{m,n})$ in $X_H$. As before, we perturb $\omega_{X,0}$ to be $\omega_X$ in order to break the Morse-Bott degeneracy, and we always assume that such perturbation is supported in a $\delta$ neighborhood of Reeb orbits inside $X_D$.

We also define the symplectic form $\Omega_X$ on $X^{m,n}$, and the notion of \emph{tame} almost complex structures with respect to $\Omega_X$. Then the coproduct is defined by considering the moduli space of $J$-holomorphic sections $\mathcal{M}^J_{z;x,y}$ where $z$ is a fixed point of $\phi^{m+n}$ we think of as the input, and $x$ and $y$ are fixed points of $\phi^n$ and $\phi^m$ we think of as outputs. For generic $J$ this moduli space is a manifold and the coproduct is defined by the mod 2 count of Fredholm index 0 $J$-holomorphic sections.

\section{The ``no crossing'' results for unperturbed \texorpdfstring{$J$}{J}} \label{section:no_crossing} \label{basic case}
Throughout this section, the symplectic fiber bundle $X$ refers to either $X_{m,n}$ or $X^{m,n}$.
As explained in the introduction, to define the cobordism map, we count $J$-holomorphic sections that asymptote to appropriate Reeb orbits. We'll prove some key properties about such $J$-holomorphic sections for some particularly nice almost complex structures. Before doing that, let us introduce some terminologies. Following \cite{mcduff2012j}, the \emph{vertical distribution} $\text{Ver}$ is the kernel of $d\pi_X:TX\to TB_0$. The \emph{horizontal distribution} $\text{Hor}$ is defined as $\text{Hor}_x:=\{u\in T_xX\ |\ \omega_X(u, v)=0 \ \forall v\in \text{Ver}_x\}$.

\begin{definition} \label{def:fibration_compatible_J}
(\cite{mcduff2012j} Definition 8.2.6) An almost complex structure on $(X,\omega_X)$ is called fibration-compatible if the following holds:
\begin{enumerate}
    \item The projection $\pi_X$ is holomorphic: $J\circ d\pi = d\pi\circ j_0$.
    \item For every $p\in B_0$, the restriction $J_p$ of $J$ to $\pi_X^{-1}(p)$ is tamed by $\omega_X|_z$.
    \item The horizontal distribution $\textup{Hor}$ is preserved by $J$.
\end{enumerate}
\end{definition}

Note that by definition, there is a one-to-one correspondence between fibration-compatible almost complex structures and $\omega_X$-tame almost complex structures on the vertical distribution.

In this section we only consider fibration-compatible almost complex structures. For a fibration-compatible $J$, all the horizontal sections are $J$-holomorphic (a section $u:B_0\to X$ is horizontal, if $du(TB_0)\subset \text{Hor}$). It's also clear from the definition that for any $J$-holomorphic section $u:B_0\to X$ and $v\in TB_0$,
\begin{equation}\label{local ineq}
u^*\omega_X(v, j_0 v)\ge 0
\end{equation}
and the equality holds if and only if $du(v)\in \text{Hor}$.

Following \cite{hutchings2005periodic} Lemma 3.11, we now establish a local energy inequality for $J$-holomorphic sections. To state the inequality, for any $x\in (-\epsilon, 1+\epsilon)$ we let $F_x$ denote the 3-manifold $B_0\times \{x\}\times S^1_y\subset X_D $. Likewise, let $F_{[x_1,x_2]}$ denote the 4- manifold $B_0\times [x_1,x_2]_x\times S^1_y\subset X_D $. The first homology group of $X_D = B_0\times(-\epsilon, 1+\epsilon)_x\times S^1_y$ is $\mathbb{Z}^3$, generated by $[S^1_{t_1}]$, $[S^1_{t_2}]$ and $[S^1_{y}]$. We identify $p[S^1_y]+q_1[S^1_{t_1}]+q_2[S^1_{t_2}]\in H_1(X_D)$ with a tuple $(p,q_1,q_2)\in \mathbb{Z}^3$.

\begin{lemma}\label{local energy lemma}
(Local energy inequality) Let $C$ be a $J$-holomorphic section $u:B_0\to X$ which is not horizontal. Assume that $C$ intersects $F_x$ transversely and that $C\cap F_x\ne \emptyset$ for some $x$ in the unperturbed range. Orient each circle in $C\cap F_x$ using the boundary orientation of $C\cap F_{[x-\epsilon',x]}$ (for a small $\epsilon'$) induced by $j_0$. Under this orientation, let  $(p,q_1,q_2)$ denote the homology class of $C\cap F_x$, then we have %\hl{x cannot be near reeb orbit}
\begin{equation}\label{homology inequality}
    p+x(mq_1+nq_2)>0
\end{equation}
\end{lemma}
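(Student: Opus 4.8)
The plan is to compute the integral of $u^*\omega_X$ over the region of the domain that $C$ maps into $F_{[x', x]}$ for $x'$ slightly to the left of $x$ (still in the unperturbed range), and to express this integral using Stokes' theorem in terms of the homology class $(p,q_1,q_2)$ of the boundary circles. Concretely, inside the unperturbed range of $X_D$ we have $\omega_X = dx\wedge dy + d(\tfrac12 x^2\beta_{m,n})$, so $\omega_X$ is exact there, equal to $d\alpha$ with $\alpha = x\,dy + \tfrac12 x^2 \beta_{m,n}$ (up to the choice of primitive; one must be careful that this primitive is globally defined on the relevant piece of $X_D$). Then $\int u^*\omega_X$ over the subsurface $S \subset B_0$ on which $u(S) \subset F_{[x',x]}$ equals $\int_{\partial S} u^*\alpha$. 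The boundary $\partial S$ consists of the circles mapping to $F_x$ (with the boundary orientation), the circles mapping to $F_{x'}$, and possibly pieces of the asymptotic ends; by shrinking $x'$ toward $x$ and using that $C$ is not horizontal, the left-hand and end contributions either vanish or have a controllable sign, and the dominant term is $\int_{C\cap F_x} u^*\alpha$.

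Next I would evaluate $\int_{C\cap F_x}\alpha$. On $F_x$ the coordinate $x$ is constant, so $\alpha|_{F_x} = x\,dy + \tfrac12 x^2\beta_{m,n}$, and $\beta_{m,n} = dg_{m,n}$ where $g_{m,n}$ restricted to the positive punctures differentiates to $m\,dt_1$ and $n\,dt_2$. Integrating $dy$ over a circle in class $(p,q_1,q_2)$ gives $p$ (the $[S^1_y]$-multiplicity), and integrating $\beta_{m,n}$ gives $m q_1 + n q_2$, since $\beta_{m,n}$ pairs with $[S^1_{t_1}]$ as $m$ and with $[S^1_{t_2}]$ as $n$ and with $[S^1_y]$ as $0$. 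Hence $\int_{C\cap F_x}\alpha = p + x(m q_1 + n q_2)$. Meanwhile, since $C$ is a non-horizontal $J$-holomorphic section, the local positivity inequality (\ref{local ineq}) gives $\int_S u^*\omega_X \ge 0$, with strict inequality because $C$ is not horizontal and the region $S$ is where $C$ genuinely meets the twist region transversally (here I would need the transversality of $C \pitchfork F_x$ and non-horizontality to guarantee that the area is actually positive, not merely nonnegative, on a set of positive measure). Combining, $p + x(m q_1 + n q_2) > 0$, which is (\ref{homology inequality}).

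The main obstacle, and the part requiring care rather than routine bookkeeping, is handling the \emph{other} boundary components of the region $S$ and making the strictness rigorous. Specifically: (i) I must argue that the primitive $\alpha$ can be chosen consistently so that the $F_{x'}$ boundary contributes $-(p + x'(mq_1+nq_2))$ with the \emph{same} $(p,q_1,q_2)$ — this uses that in the unperturbed range, between $x'$ and $x$, no Reeb orbits or perturbation data intervene, so the homology class of $C\cap F_{x'}$ agrees with that of $C\cap F_x$ (the cylinder $C\cap F_{[x',x]}$ realizes the homology); then taking $x' \to$ some reference point, or rather taking $x'$ as small as the unperturbed range allows and controlling the limiting boundary term, is what produces the strict sign. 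Actually the cleaner route, following Hutchings, is to take the region to be all of $C \cap F_{[x_{\min}, x]}$ down to the left edge of the unperturbed range where $C$ may exit into $X_H$ or to the end, and bound that leftover boundary term's sign; (ii) I must ensure the asymptotic ends of $C$ (over the punctures of $B_0$), if they lie in $F_{[x',x]}$, contribute with a favorable sign — the Reeb orbits over the punctures at heights $x=i/m$ etc. have wrapping governed by $g_{m,n}$, and the point of the unperturbed-range hypothesis and the positivity of $\omega_X$ is precisely to control these. I expect the argument to closely mirror Lemma 3.11 of \cite{hutchings2005periodic}, with the twist-region coordinates $(x,y)$ and the multiplicities $m, n$ entering through $\beta_{m,n}$ exactly as the degree enters in the periodic Floer homology setting.
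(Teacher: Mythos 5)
Your overall strategy (integrate $u^*\omega_X$ over a slab $C \cap F_{[\cdot,\cdot]}$, rewrite the slab integral via Stokes using the primitive $\alpha = x\,dy + \tfrac12 x^2\beta_{m,n}$, and invoke the strict positivity (\ref{local ineq}) for non-horizontal sections) is the same as the paper's, but two things go wrong in the details and one of them is fatal to the one-sided version you sketch.

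First, a computational error: with $\alpha = x\,dy + \tfrac12 x^2\beta_{m,n}$, integrating over a curve in $F_x$ in class $(p,q_1,q_2)$ gives $\int_{C\cap F_x}\alpha = x\,p + \tfrac12 x^2 (m q_1 + n q_2)$, not $p + x(mq_1+nq_2)$; you dropped the factor of $x$ from the $dy$ term and changed $\tfrac12 x^2$ to $x$. Second, and more seriously, the one-sided slab $F_{[x',x]}$ with $x'\to x$ does not produce the strict inequality at $x$. Stokes on the slab $[x_1,x_2]$ (both in the unperturbed range, both transverse) gives
\begin{equation*}
0 < \int_{C\cap F_{[x_1,x_2]}} u^*\omega_X = (x_2-x_1)\Bigl(p + \tfrac{x_1+x_2}{2}(mq_1+nq_2)\Bigr),
\end{equation*}
so what you actually learn is $p + \tfrac{x_1+x_2}{2}(mq_1+nq_2) > 0$: a strict inequality at the \emph{midpoint} $\tfrac{x_1+x_2}{2}$, not at $x$. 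Your proposal of taking $x' \to x$ from one side drives the midpoint $\tfrac{x'+x}{2}$ to $x$ but simultaneously drives the left-hand side to $0$, and in the limit you only recover $p + x(mq_1+nq_2) \ge 0$, which is weaker than the claim. Your alternative of pushing $x'$ all the way down to the edge of the unperturbed range is circular: you would need a sign bound on the boundary term at that edge, which is precisely the kind of inequality you are trying to prove. The paper's fix is to choose $x_1 < x < x_2$ \emph{symmetric} about $x$ (i.e.\ $x - x_1 = x_2 - x$, so the midpoint is exactly $x$), for which the Stokes computation collapses cleanly to $(x_2-x_1)(p + x(mq_1+nq_2)) > 0$ and the strict inequality lands exactly where you need it. This symmetric choice is the one idea missing from your sketch. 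Finally, your worry about asymptotic ends contributing to $\partial S$ is moot here: by definition the unperturbed range contains no Reeb orbit heights, so $C \cap F_{[x_1,x_2]}$ is compact with boundary only on $F_{x_1}\cup F_{x_2}$, and Stokes applies with no end contributions.
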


\begin{proof}[Proof of Lemma \ref{local energy lemma}]
Choose $x_1<x<x_2$ such that
\begin{enumerate}
\item $C$ intersects both $F_{x_1}$ and $F_{x_2}$ transversely,
\item $x-x_1=x_2-x$, and
\item $[x_1,x_2]$ is contained in the unperturbed range.
\end{enumerate}

We orient $C\cap F_{x_1}$ and $C\cap F_{x_1}$ in the same way as stated in the lemma, so we have $\partial(C\cap F_{[x_1,x_2]})$ = $C\cap F_{x_2}-C\cap F_{x_1}$. Also notice that with the specified orientations, $C\cap F_{x_1}$, $C\cap F_{x_2}$ and $C\cap F_{x}$ all have the same homology class in $H_1(X_D)$. Now we note that $C\cap F_{[x_1,x_2]}$ is not horizontal, otherwise $C$ has to be horizontal everywhere by unique continuation (recall that all horizontal sections are $J$-holomorphic).

Using the inequality (\ref{local ineq}), we have (in the following the one-form $\beta$ refers to either $\beta_{m,n}$ or $\beta^{m,n}$)
\begin{align}
\begin{split}
0 &< \int_{C\cap F_{[x_1,x_2]}} u^*\omega_X = \int_{C\cap F_{[x_1,x_2]}} u^*(dx\wedge dy+d(\frac{1}{2}x^2\beta))\\
  &= \int_{\partial(C\cap F_{[x_1,x_2]})} u^*(x dy+\frac{1}{2}x^2\beta)\\
  &= x_2\int_{C\cap F_{x_2}} u^*dy + \frac{1}{2}x_2^2\int_{C\cap F_{x_2}} u^*\beta- x_1\int_{C\cap F_{x_1}} u^*dy - \frac{1}{2}x_1^2\int_{C\cap F_{x_1}} u^*\beta\\
  &=x_2 p+\frac{1}{2}x_2^2 (mq_1+nq_2)-x_1 p-\frac{1}{2}x_1^2(mq_1+nq_2)\\
  &=(x_2-x_1)(p+x(mq_1+nq_2)).
\end{split}
\end{align}
\end{proof}
\begin{remark}
  It is clear from the proof that if $u$ is a horizontal section, then the equality $p+x(mq_1+nq_2)=0$ holds.
\end{remark}
Let's assume for the moment that $\Sigma_0=\Sigma-N'$ is connected. Following \cite{hutchings2005periodic}, we define the wrapping number of $J$-holomorphic sections with cylindrical asymptotes to be $\eta(C)=\# C\bigcap (B_0\times\{P_0\})$, where $P_0\in\Sigma_0$ is not a critical point of $H_0$. The algebraic intersection number does not depend on the choice of $P_0$.
We also note that $\eta(C)$ is identically zero if $\partial\Sigma\ne\emptyset$ and under the additional assumption that near each boundary component, $J$ is induced from the vertical almost complex structure that sends $\partial_{x_i}$ to $\partial_{y_i}$. The reason is that once such a $J$ is chosen, no $J$-holomorphic sections can enter the boundary region by the following maximum principle, so one can choose $P_0$ inside one of the boundary region and easily see that $C\bigcap (B_0\times\{P_0\})=\emptyset$.

\begin{lemma}\label{mp}
(Maximum principle) Let $J$ be a fibration-compatible almost complex structure on $X$ that sends $\partial_{x_i}$ to $\partial_{y_i}$ near each boundary component of $\Sigma$. Here the index $i$ labels the different boundary components of $\Sigma$.
Let $V$ denote an open subset of $B_0$ with local coordinates $(s,t)$. Let $\Tilde{u}:V\to X_H$ denote a $J$-holomorphic section, which in coordinates look like $(s,t)\mapsto (s,t,x_i(s,t),y_i(s,t))$. We further assume for $(s,t)\in V$, the pair $(x_i(s,t),y_i(s,t))$ is in a neighborhood of the $i$th boundary component of $\Sigma$. Then $x_i(s,t)$ is a harmonic function.
\end{lemma}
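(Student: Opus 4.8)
The plan is to reduce the statement to the classical fact that the Cauchy–Riemann equation forces the fiber coordinates of a holomorphic section to satisfy a second-order elliptic PDE, and then to identify that PDE with Laplace's equation using the explicit form of $J$ near the boundary. First I would fix the boundary component labelled by $i$ and recall from the setup that near it the symplectomorphism $\phi_0^n$ (and hence the fiber bundle $X_H$) is modelled on $(-\epsilon_i,0]_{x_i}\times S^1_{y_i}$ with $H_0(x_i,y_i)=\theta_i x_i$, so that the fiberwise two-form is $\omega_X = dx_i\wedge dy_i + d(\theta_i x_i\,\beta)$ and the one-form $\beta = dg$ has the property that near the relevant puncture it is a constant multiple of $dt$. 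Since $J$ is fibration-compatible, $\pi_X$ is holomorphic, so writing $u(s,t) = (s,t,x_i(s,t),y_i(s,t))$ the section equation $J\,du = du\,j_0$ decouples: the base part is automatic, and the vertical part says that the "graph" $(x_i,y_i)$ of the section is $J$-holomorphic in the appropriate sense.

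Next I would use the hypothesis that $J$ sends $\partial_{x_i}$ to $\partial_{y_i}$. On the vertical distribution over this boundary region, $J$ is then the standard complex structure in the $(x_i,y_i)$ coordinates, so a $J$-holomorphic section amounts (after accounting for the horizontal correction coming from $\beta$, which enters only through the horizontal distribution preservation and not the fiberwise complex structure) to the condition that $x_i + \sqrt{-1}\,y_i$ be a holomorphic function of $s + \sqrt{-1}\,t$, up to the affine shear induced by the $\theta_i x_i\,\beta$ term. Concretely I would compute the horizontal distribution $\mathrm{Hor}$ over the boundary region explicitly from $\omega_X = dx_i\wedge dy_i + \theta_i\,dx_i\wedge\beta$: one finds $\mathrm{Hor}$ is spanned by $\partial_s$ and $\partial_t + c_i\,\theta_i\,\partial_{y_i}$ for the appropriate constant $c_i$ coming from $\beta = c_i\,dt$, and the requirement that $J$ preserve $\mathrm{Hor}$ together with $J\partial_{x_i}=\partial_{y_i}$ pins down $J$ completely on $TX$ over this region. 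Feeding this $J$ into the equation $\partial_s u + J\,\partial_t u = 0$ gives a first-order linear system in $(x_i,y_i)$ whose real and imaginary parts are Cauchy–Riemann-type equations; eliminating $y_i$ by cross-differentiating yields $\Delta x_i = \partial_s^2 x_i + \partial_t^2 x_i = 0$, i.e. $x_i$ is harmonic.

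The main obstacle I anticipate is bookkeeping the contribution of the Hamiltonian term $d(H_0\beta)$ and of the symplectization coordinate to the horizontal distribution: one must check that the shear it introduces affects only $y_i$ (a first-order term) and not the second-order operator on $x_i$, so that harmonicity of $x_i$ survives intact even though $y_i$ itself is not harmonic in general. A clean way to handle this is to change coordinates in the fiber by $\tilde y_i = y_i - (\text{primitive of }\theta_i\beta)$, absorbing the horizontal twist so that $J$ becomes literally split standard; in these coordinates the section equation is exactly the standard $\bar\partial$, $x_i + \sqrt{-1}\,\tilde y_i$ is holomorphic, hence $x_i$ is harmonic, and since the coordinate change is $x_i$-independent this conclusion transfers back verbatim. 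I would then remark that this is exactly the input needed for the maximum-principle application mentioned just before the lemma: a harmonic $x_i$ on $V$ with values in $(-\epsilon_i,0]$ cannot attain an interior maximum, which rules out $J$-holomorphic sections entering the boundary collar and makes the wrapping number well-defined and zero in the boundary case.
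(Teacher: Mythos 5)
Your proof is correct and follows essentially the same line as the paper: compute the horizontal lifts of $\partial_s,\partial_t$, read off the inhomogeneous Cauchy--Riemann-type system for $(x_i,y_i)$, and eliminate $y_i$ using closedness of $\beta$. Two small slips worth fixing: the specialization $\beta = c_i\,dt$ is only exact on the asymptotic cylindrical ends, so for an arbitrary $V\subset B_0$ you do need the general $\beta = F\,ds + G\,dt$ and the relation $\partial_t F = \partial_s G$; and the coordinate change should read $\tilde y_i = y_i + \theta_i g$ (not $-\theta_i g$), since then $dx_i\wedge dy_i + \theta_i\,dx_i\wedge\beta$ becomes exactly $dx_i\wedge d\tilde y_i$, the horizontal distribution becomes $\mathrm{span}\{\partial_s,\partial_t\}$, and $J$ is literally the split standard structure --- a clean way to see at a glance the cross-differentiation that closes the paper's proof.
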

\begin{proof}
The setup is almost identical to that of Lemma \ref{dictionary to 2d}, except that the Hamiltonian function is $\theta_i x_i$ instead of $x_i^2/2$. In particular, the horizontal lifts are
\begin{equation}
    \partial_s^\#=\partial_s-\theta_i F\partial_y,\ \partial_t^\#=\partial_t-\theta_i G\partial_y
\end{equation}
and we have a similar equation:
\begin{equation}
\begin{cases}
\frac{\partial x_i}{\partial t}+\frac{\partial y_i}{\partial s}+\theta_iF=0\\
\frac{\partial y_i}{\partial t}-\frac{\partial x_i}{\partial s}+\theta_iG=0.
\end{cases}
\end{equation}
Notice that $\beta = F(s,t)ds+G(s,t)dt$ being closed tells us that $\frac{\partial F}{\partial t} = \frac{\partial G}{\partial s}$, so the conclusion follows by a simple calculation.
\end{proof}
In particular the above lemma implies that as long as $J$ is chosen in a neighborhood of each of the boundary components of $\Sigma$ to be fibration compatible and sends $\partial_{x_i}$ to $\partial_{y_i}$, no $J$-holomorphic section may approach the boundary components of $\Sigma$.
\begin{remark}
  If $\Sigma_0=\Sigma-N'$ is not connected, i.e. $\gamma$ is separating, we can define two wrapping numbers $\eta_1$ and $\eta_2$ for each of the connected components of $\Sigma_0$. It is clear from the above arguments that if each connected components of $\Sigma_0$ contains part of $\partial \Sigma$, then all the wrapping numbers vanish automatically.
\end{remark}
\begin{remark} \label{remark_nonnegative_wrapping_number}
The wrapping numbers of any $J$-holomorphic section are non-negative, see \cite{hutchings2005periodic} Lemma 4.3 for a proof (the proof generalizes in our setting as well, see the proof of Lemma \ref{no crossing lemma}).
\end{remark}

The first main result of this section is the following ``no crossing'' lemma:
\begin{lemma}\label{no crossing lemma}
Assume $J$ is a fibration-compatible almost complex structure on $X_{m,n}$. If $C$ is a $J$-holomorphic section of $X_{m,n}$ such that all wrapping numbers are zero, then $C$ is either contained in $X_H$ or contained in $X_D$.
\end{lemma}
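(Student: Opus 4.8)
The plan is to argue by contradiction: suppose $C$ is a $J$-holomorphic section that is neither contained in $X_H$ nor in $X_D$. Since $X_D$ and $X_H$ overlap precisely along the two collar regions $B_0 \times (-\epsilon,\epsilon)_x \times S^1_y$ and $B_0 \times (1-\epsilon,1+\epsilon)_x \times S^1_y$, a section that is not contained in either piece must genuinely enter the ``interior'' of $X_D$ (i.e. the region $\epsilon \le x \le 1-\epsilon$) and also leave $X_D$ on at least one side, so in particular $C$ must cross one of the fibers $F_x$ for some $x$ in the unperturbed range with $\epsilon < x < 1-\epsilon$, and $C$ is not horizontal (otherwise, as in the remark after Lemma \ref{local energy lemma}, the section is determined by continuation and one checks a horizontal section cannot do this). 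The strategy is to apply the local energy inequality (Lemma \ref{local energy lemma}) at a generic such $x$, where $C$ meets $F_x$ transversally, and to separately analyze the homology class $(p,q_1,q_2)$ of $C \cap F_x$ using the vanishing of the wrapping numbers.

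First I would pin down the homology class $(p,q_1,q_2)$. Because $\pi_X \circ u = \mathrm{id}$, the projection of $C \cap F_x$ to $B_0$ is a disjoint union of embedded loops; applying Stokes/degree considerations to the closed one-form $dt_i$ pulled back from $B_0$, each component of $C \cap F_x$ has a well-defined winding in the $t_1$ and $t_2$ directions, but since these loops bound in $B_0$ (they are cross-sections of the compact region $C \cap F_{[x_1,x_2]}$ over a thrice-punctured sphere), the total $t_i$-winding is controlled by the wrapping numbers. The key point, following \cite{hutchings2005periodic} Lemma 4.3 (cited in Remark \ref{remark_nonnegative_wrapping_number}), is that the wrapping numbers being zero forces $q_1 = q_2 = 0$: the intersection of $C$ with $B_0 \times \{P_0\}$ counts exactly the ``net wrapping'' and this net wrapping equals (a combination of) the $q_i$ together with $p$. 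So when all wrapping numbers vanish, the class of $C \cap F_x$ reduces to $(p,0,0)$, and moreover $p$ is itself forced to be $0$ — it records how many times $C$, restricted near the fiber, wraps around the core $S^1_y$, which is again measured by a wrapping number. Then Lemma \ref{local energy lemma} gives $p + x(mq_1 + nq_2) = p > 0$, contradicting $p = 0$.

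Alternatively, and perhaps more cleanly, once $q_1 = q_2 = 0$ is established one can run the argument on both sides: orienting $C \cap F_x$ by the boundary orientation of $C \cap F_{[x-\epsilon',x]}$ gives $p > 0$ from the region to the left, while orienting by $C \cap F_{[x, x+\epsilon']}$ — equivalently applying the inequality to the reversed/reflected configuration, which only flips the sign of the boundary term — forces the same class to satisfy $-p > 0$, i.e. $p < 0$; these are incompatible, so no such $x$ can exist and $C$ never crosses into the interior of $X_D$, hence $C \subset X_H$, or else $C$ stays within $X_D$. One has to be a little careful that the two-sided argument is legitimate: Lemma \ref{local energy lemma} as stated uses the left boundary orientation, but its proof (integrating $u^*\omega_X$ over $C \cap F_{[x_1,x_2]}$ with $x_1 < x < x_2$) is symmetric, and shrinking $x_2 \downarrow x$ versus $x_1 \uparrow x$ yields both inequalities on the common class at $F_x$.

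The main obstacle I anticipate is the bookkeeping in the second paragraph: carefully showing that the vanishing of the (one or two) wrapping numbers forces $q_1 = q_2 = 0$ \emph{and} $p = 0$ for the homology class of $C \cap F_x$, uniformly in $x$ across the unperturbed range and respecting the collar-gluing identifications (which shear the $y$-coordinate by $g_{m,n}(p)$ on the right collar, so that the naive generators of $H_1$ of the two collars are identified with a twist). Once that identification is set up correctly, the local energy inequality does all the remaining work, and the separating case $\Sigma_0$ disconnected is handled verbatim using the pair $\eta_1, \eta_2$ as in the remark preceding this lemma.
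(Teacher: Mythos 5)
Your proof has a genuine gap at the crucial step. You claim that vanishing wrapping numbers force $q_1 = q_2 = 0$ (and also $p=0$) for the class of $C\cap F_x$. This is false. The wrapping number at a slice $F_{\pm\epsilon_1}$ in the left collar only records the $y$-winding, i.e. it pins down $p^\pm = 0$; at a slice $F_{1\pm\epsilon_2}$ in the right collar, because of the shear $y\mapsto y+g_{m,n}(p)$ in the gluing, it pins down $p + mq_1 + nq_2 = 0$. In neither case does it constrain $q_1$ or $q_2$ separately. Indeed, once $p^\pm=0$, the local energy inequality at $F_{\epsilon_1}$ and $F_{-\epsilon_1}$ reads
\begin{equation}
\epsilon_1\bigl(mq_1^+ + nq_2^+\bigr) > 0 > -\epsilon_1\bigl(mq_1^- + nq_2^-\bigr),
\end{equation}
forcing $mq_1^+ + nq_2^+ \ge 1$ and $mq_1^- + nq_2^- \le -1$ — so the $q_i$ are \emph{nonzero}, and there is no immediate contradiction. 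Your ``alternative'' two-sided argument inherits the same false premise, and in addition implicitly assumes the classes of $C\cap F_{x-\epsilon'}$ and $C\cap F_{x+\epsilon'}$ agree, which fails in general because $C$ can have asymptotic ends (punctures) in the slab between them.

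What is missing is the entire second half of the paper's argument. After getting $p^\pm = 0$ and the two inequalities on $mq_1^\pm + nq_2^\pm$, one has to look at $C\cap F_{[-\epsilon_1,\epsilon_1]}$ and count its punctures over $x=0$: each such puncture asymptotes to a Reeb orbit with a definite class in $H_1(X_D)$ ($(0,1,0)$, $(0,0,1)$, or $(0,1,1)$ depending on which puncture of $B_0$ it projects to), and the homology bookkeeping forces $d_{-\infty}=1$, i.e. $C$ must already use up its unique negative end over $x=0$. One then rules out $C$ crossing the $x=1$ slab by a second round of the same analysis, in both of the two sub-cases $C\cap F_{1-\epsilon_2}=\emptyset$ and $C\cap F_{1+\epsilon_2}=\emptyset$. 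This puncture-counting argument is the heart of the proof and is absent from your proposal. Finally, the horizontal case is not merely a remark: the paper gives a short direct computation from the horizontality equations showing $x$ is locally constant in the unperturbed region; you should include that or a precise substitute.
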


\begin{proof}[Proof of Lemma \ref{no crossing lemma}]
We first consider the case where $C$ is not horizontal. Suppose there is a nonhorizontal $J$-holomorphic section $C$ that is neither contained in $X_H$ nor in $X_D$, then since $C$ is connected, we can either find some $\epsilon_1\in (\delta,\epsilon)$ such that $C$ intersects both $F_{\epsilon_1}$ and $F_{-\epsilon_1}$ transversely and $C\bigcap F_{\pm\epsilon_1}\ne\emptyset$, or some $\epsilon_1\in (\delta,\epsilon)$ such that $C$ intersects both $F_{1+\epsilon_1}$ and $F_{1-\epsilon_1}$ transversely and $C\bigcap F_{1\pm\epsilon_1}\ne\emptyset$. Without loss of generality, let us assume the first situation happens. (If the second situation happens, the following proof works almost verbatim; the only change one needs to make is that, if we use $(p^{\pm},q_1^{\pm},q_2^{\pm})$ to denote the homology classes of $C\bigcap F_{1\pm\epsilon_1}$, then the condition $\eta=0$ translates to $p^{\pm}+m q_1^{\pm}+n q_2^{\pm}=0$.)

Let $(p^{\pm},q_1^{\pm},q_2^{\pm})$ denote the homology classes of $C\bigcap F_{\pm\epsilon_1}$. Since all the wrapping numbers vanish, we observe that $p^{\pm}=0$. To see this fact, notice that we can choose $P_0$ to be $(\pm\epsilon_1, y_0)\in\Sigma_0$ for some fixed $y_0\in S^1$, then $\# C\bigcap (B_0\times\{P_0\})$ is precisely the number of times $C\bigcap F_{\pm\epsilon_1}$ passes through $y_0$, which equals $p^{\pm}$.

Lemma \ref{local energy lemma} tells us that
\begin{equation}
\epsilon_1(mq_1^{+}+nq_2^{+})>0>\epsilon_1(mq_1^{-}+nq_2^{-})
\end{equation}
which implies that
\begin{equation}
mq_1^{+}+nq_2^{+}\ge 1,\ mq_1^{-}+nq_2^{-}\le -1.
\end{equation}
Now we consider $C_{[-\epsilon_1, \epsilon_1]}:=C\bigcap F_{[-\epsilon_1, \epsilon_1]}$ which is a surface with boundary $C\bigcap F_{\epsilon_1}-C\bigcap F_{-\epsilon_1}$, possibly with positive and negative punctures at $x=0$. Let $d_1$(resp. $d_2$, $d_{-\infty}$) $\in\{0,1\}$ denote the number of punctures of $C_{[-\epsilon_1,\epsilon_1]}$ that project to the first positive puncture (resp. the second positive puncture, the negative puncture) of $B_0$. Notice that the two Reeb orbits over $x=0$ at the first positive puncture (resp. the second puncture, the negative puncture) have the homology class $(0,1,0)$ (resp. $(0,0,1)$ and $(0,1,1)$), so we have
\begin{equation}
d_1(0,1,0)+d_2(0,0,1)+(0,q_1^+,q_2^+)=d_{-\infty}(0,1,1)+(0,q_1^-,q_2^-)
\end{equation}
and hence
\begin{equation}
2\le (mq_1^+ +nq_2^+)-(mq_1^- + nq_2^-)=(m+n)d_{-\infty}-md_1-nd_2
\end{equation}
which implies that
\begin{equation}
d_{-\infty}=1.
\end{equation}

The above equation implies that $C$ has no other negative punctures. So for any $\epsilon_2\in (\delta,\epsilon)$, the section $C$ cannot intersect both $F_{1-\epsilon_2}$ and $F_{1+\epsilon_2}$, because otherwise, the same argument as above would imply that $C$ has another negative puncture asymptotic to one of the Reeb orbits over $x=1$, a contradiction. So there are two remaining possibilities:
\begin{enumerate}
    \item $C\bigcap F_{1-\epsilon_2}=\emptyset$. Let us consider $C\bigcap F_{[\epsilon_1,1-\epsilon_2]}$. For this part of $C$, there are no negative punctures or positive punctures, so we conclude that $\partial(C\bigcap F_{[\epsilon_1,1-\epsilon_2]}) = -C\bigcap F_{\epsilon_1}$ is null homologous in $H_1(X_D)$, which contradicts the fact that $mq_1^++nq_2^+\ge1$.

    \item $C\bigcap F_{1+\epsilon_2}=\emptyset$. Let us consider $C\bigcap F_{[\epsilon_1,1+\epsilon_2]}$. For this part of $C$, let $a$ (resp. $ b$)$\in\{0,1\}$ denote the number of punctures $C$ has at $x=1$ that project to the first (resp. second) positive puncture of $B_0$. Observe that the Reeb orbits at $x=1$ near the first (resp. second) positive puncture have the homology class $(-1,1,0)$ (resp. $(-1,0,1)$) $\in H_1(X_D)$, so we conclude that
    \begin{equation}
    a(-m,1,0)+b(-n,0,1)=(0,q_1^+,q_2^+)
    \end{equation}
    which in turn implies that $a=b=0$. But then it follows that $q_1^+=q_2^+=0$, contradicting $mq_1^{-}+nq_2^{-}\le -1$.
\end{enumerate}

Finally, we consider the case where $C$ is horizontal. Suppose there exist such horizontal section $C$ that is neither contained in $X_D$ nor in $X_H$, then again WLOG we can assume that there exist some $\epsilon_1\in (\delta,\epsilon)$ such that $C$ intersects both $F_{\epsilon_1}$ and $F_{-\epsilon_1}$ transversely and $C\bigcap F_{\pm\epsilon_1}\ne\emptyset$. Recall that inside $X_D$ apart from the perturbed region, $\omega_X=dx\wedge dy+d(\frac{1}{2}x^2\beta_{m,n})$. We show in the following that outside of the perturbed region, the $x$-coordinate of the section contained in $X_D$ is locally constant, which obviously leads to a contradiction.

To see this fact, we write the one-form $\frac{1}{2}x^2\beta_{m,n}$ as
\begin{equation}
  \frac{1}{2}x^2\beta_{m,n}=fds+gdt
\end{equation}
where $(s,t)$ is the local coordinate for $B_0$. We next compute that the horizontal lifts $\partial_s^\#$, $\partial_t^\#$ of the two vector fields $\partial_s$, $\partial_t$ are:
\begin{equation}
  \partial_s^\# = \partial_s +\frac{\partial f}{\partial y}\partial_x-\frac{\partial f}{\partial x}\partial_y
\end{equation}
\begin{equation}
  \partial_t^\# = \partial_t +\frac{\partial g}{\partial y}\partial_x-\frac{\partial g}{\partial x}\partial_y.
\end{equation}
It follows that if $u$ is horizontal, then the part of $u(s,t)=(s,t, x(s,t),y(s,t))$ in $X_D$ outside of the perturbed region satisfies:
\begin{equation}
  \frac{\partial x}{\partial s}=\frac{\partial f}{\partial y},\ \frac{\partial y}{\partial s}=-\frac{\partial f}{\partial x}
\end{equation}
\begin{equation}
  \frac{\partial x}{\partial t} = \frac{\partial g}{\partial y},\ \frac{\partial y}{\partial t}=-\frac{\partial g}{\partial x}.
\end{equation}
Recall that by our assumption, away from the perturbed region inside $X_D$, we have $f_y=g_y=0$. It follows that $x$ is locally constant.

This concludes the proof of Lemma \ref{no crossing lemma}.
\end{proof}

A parallel result holds for $X^{m,n}$:
\begin{lemma}\label{coproduct no crossing lemma}
Assume $J$ is a fibration-compatible almost complex structure on $X^{m,n}$. If $C$ is a $J$-holomorphic section of $X^{m,n}$ such that all wrapping numbers are zero, then $C$ is either contained in $X_H$ or contained in $X_D$.
\end{lemma}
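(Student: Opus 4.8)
The plan is to run the proof of Lemma \ref{no crossing lemma} essentially unchanged, keeping in mind the only structural difference between the two bundles: for $X^{m,n}$ there is a single positive puncture, modelled on $Y_{\phi^{m+n}}$, and two negative punctures, modelled on $Y_{\phi^m}$ and $Y_{\phi^n}$. I would first dispose of the horizontal case exactly as before: away from the perturbed region $\omega_X = dx\wedge dy+d(\tfrac12 x^2\beta^{m,n})$, so writing $\tfrac12 x^2\beta^{m,n}=f\,ds+g\,dt$ the horizontal-lift equations make the $x$-coordinate of any horizontal section locally constant wherever $\partial f/\partial y=\partial g/\partial y=0$, which holds outside the perturbation; hence no horizontal section can pass from $X_D\setminus X_H$ to $X_H\setminus X_D$.

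For a non-horizontal section $C$ lying in neither piece, connectedness gives some $\epsilon_1\in(\delta,\epsilon)$ in the unperturbed range with $C$ meeting both $F_{\epsilon_1}$ and $F_{-\epsilon_1}$ (a crossing of the $x=0$ collar) or both $F_{1-\epsilon_1}$ and $F_{1+\epsilon_1}$ (a crossing of the $x=1$ collar); assume the former. The inputs from the product proof transfer verbatim: Lemma \ref{local energy lemma} holds for $\beta^{m,n}$, so with $(p^\pm,q_1^\pm,q_2^\pm)$ the class of $C\cap F_{\pm\epsilon_1}$ one has $mq_1^+ + nq_2^+\ge 1$ and $mq_1^- + nq_2^- \le -1$, and vanishing of the wrapping numbers forces $p^\pm=0$ at the $x=0$ collar (and $p^\pm + mq_1^\pm + nq_2^\pm=0$ at the $x=1$ collar, because of the nontrivial gluing by $g^{m,n}$). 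The one genuine change is the homological balance for $C\cap F_{[-\epsilon_1,\epsilon_1]}$: with $d_P,d_{Q_1},d_{Q_2}\in\{0,1\}$ the numbers of its ends over the positive and the two negative punctures of $B_0$, and with Reeb orbits over $x=0$ of classes $(0,1,1)$, $(0,1,0)$ and $(0,0,1)$, one finds
\begin{equation}
2\le (mq_1^+ + nq_2^+)-(mq_1^- + nq_2^-)= -(m+n)d_P + m d_{Q_1}+n d_{Q_2}\le (m+n)(1-d_P),
\end{equation}
so $d_P=0$ and $m d_{Q_1}+n d_{Q_2}\ge 2$. In words: the unique positive puncture is never asymptotic to a collar orbit, and at any collar $C$ crosses, enough of its two negative punctures are consumed to carry an energy jump of at least $2$.

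From here the case analysis follows the product proof. If $C$ crosses only the $x=0$ collar, then for a suitable $\epsilon_2\in(\delta,\epsilon)$ either $C\cap F_{1-\epsilon_2}=\emptyset$ or $C\cap F_{1+\epsilon_2}=\emptyset$; in the first case one examines $C\cap F_{[\epsilon_1,1-\epsilon_2]}$, whose only ends besides $-\,C\cap F_{\epsilon_1}$ are twist-region orbits at $x>\epsilon_1$ (hence of nonzero $[S^1_y]$-component), which are excluded as in the proof of Lemma \ref{no crossing lemma} using $p^+=0$, so $C\cap F_{\epsilon_1}$ is nullhomologous, contradicting $mq_1^+ + nq_2^+\ge1$; in the second case the computation with the $x=1$ orbit classes $(-(m+n),1,1)$, $(-m,1,0)$, $(-n,0,1)$ forces the relevant puncture counts to vanish, contradicting $mq_1^- + nq_2^-\le-1$. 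The branch with no analogue in the product case is that $C$ crosses \emph{both} collars: then $d_P=0$ at each, so the positive puncture is asymptotic either to a Morse orbit in $\Sigma_0$ or to a twist-region orbit at some $x=k/(m+n)$ with $0<k<m+n$, while the two negative punctures must account for disjoint energy jumps each $\ge2$, which already forces $m,n\ge2$ and pins the two negative asymptotes to the two collars; I would rule this out by the same bulk-homology bookkeeping applied between the collars, supplemented if necessary by a wrapping-number argument for the remaining sub-cases.

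The hard part will be exactly this last branch. Unlike in Lemma \ref{no crossing lemma}, where the single output puncture gets consumed at a crossing and thereby immediately forbids a second crossing, here it is the single \emph{input} that is \emph{not} consumed, so the contradiction must be squeezed out of the two output punctures and the section running through the bulk of the twist region; one also has to keep the Reeb-orbit homology classes and the signs in the balance for $C\cap F_{[-\epsilon_1,\epsilon_1]}$ consistent with the conventions fixed for $X_{m,n}$. This is purely combinatorial/homological bookkeeping, requiring no new analytic input beyond Lemma \ref{local energy lemma}.
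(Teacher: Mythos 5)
Your overall plan matches the paper's proof: dispose of horizontal sections by the locally constant $x$-coordinate argument, use Lemma \ref{local energy lemma} together with vanishing wrapping number to get $mq_1^++nq_2^+\ge 1$ and $mq_1^-+nq_2^-\le -1$, use the homology balance across $F_{[-\epsilon_1,\epsilon_1]}$ to force $d_P=0$ and $md_{Q_1}+nd_{Q_2}\ge 2$, and then split into cases. You also correctly identify the structural asymmetry with $X_{m,n}$: here the unconsumed end is the single positive puncture, so a second collar crossing is not automatically forbidden, and this is exactly the paper's new Case 2.2.

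The problem is that Case 2.2 is precisely where you stop, and it is not ``purely combinatorial/homological bookkeeping''; there is a further analytic-flavored step you would not get from the balance alone. Running the $d'$-analysis at the $x=1$ collar (using the shifted wrapping-number constraint $p^{1\pm\epsilon_2}+mq_1^{1\pm\epsilon_2}+nq_2^{1\pm\epsilon_2}=0$, which you do note) gives $d'_\infty=0$, $d'_2=1$ and the relation $p^{1-\epsilon_2}-n=p^{1+\epsilon_2}$. But you must then show that the positive puncture is in fact consumed inside $F_{[-\epsilon_1,1+\epsilon_2]}$: if it were not, the balance would give $p^{1+\epsilon_2}=-n$, hence $p^{1-\epsilon_2}=0$, and the wrapping-number identity at $x=1$ would force $mq_1^{1-\epsilon_2}+nq_2^{1-\epsilon_2}=0$, contradicting the local energy inequality at $x=1-\epsilon_2$. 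This step needs a second pass through Lemma \ref{local energy lemma} coupled with the $x=1$ wrapping-number constraint, not just class accounting. Only after this does the final balance for $C\cap F_{[-\epsilon_1,1+\epsilon_2]}$ give $q_i^-=q_i^{1+\epsilon_2}$, which contradicts the two local energy bounds. Until these steps are supplied, your proposal has a genuine gap in exactly the branch you flagged as hard.
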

The proof of this result, however, is different from the one described above, so we present the details here:

\begin{proof}[Proof of Lemma \ref{coproduct no crossing lemma}]
As before, we only need to consider the case where $C$ is not horizontal. Suppose there is some $J$-holomorphic section $C$ that is neither contained in $X_H$ nor $X_D$, WLOG we assume that there is some $\epsilon_1\in(\delta, \epsilon)$ such that $C$ intersects both $F_{\pm\epsilon_1}$ transversely and $C\bigcap F_{\pm\epsilon_1}\ne\emptyset$.

Let $(p^{\pm},q_1^{\pm},q_2^{\pm})$ denote the homology classes of $C\bigcap F_{\pm\epsilon_1}$. Since $\eta(C)=0$, we again observe that $p^{\pm}=0$. Now the local energy inequality implies that:
\begin{equation}
m q_1^{+}+n q_2^{+}\ge 1,\ m q_1^{-}+n q_2^{-}\le -1.
\end{equation}
Let $d_\infty$(resp. $d_1$, $d_2$) $\in\{0,1\}$ denote the number of punctures of $C_{[-\epsilon_1,\epsilon_1]}$ that project to the positive puncture (resp. the two negative punctures) of $B_0$. We have:
\begin{equation}
d_1(0,1,0)+d_2(0,0,1)+(0,q_1^-,q_2^-)=d_{\infty}(0,1,1)+(0,q_1^+,q_2^+)
\end{equation}
and hence
\begin{equation}
2\le  m(q_1^+-q_1^-)+n(q_2^+-q_2^-) = m(d_1-d_\infty)+n(d_2-d_\infty).
\end{equation}

We conclude that $d_\infty=0$ and that at least one of $d_1$ and $d_2$ is $1$. There are two possibilities:
\begin{enumerate}
    \item $d_1=d_2=1$. If this is the case, then $C$ does not have other outputs. We conclude that for any small enough $\epsilon_2$, the section $C$ cannot intersect both $F_{1\pm\epsilon_2}$, otherwise the exact same argument would tell us that $C$ has at least another output over $x=1$. Choose $l\in \{1-\epsilon_2,1+\epsilon_2\}$ such that $C\bigcap F_l=\emptyset$. We now look at $C\bigcap F_{[\epsilon_1,l]}$. This part of $C$ can only have a positive puncture (or no punctures at all) with homology class $(-k,1,1)$ for some $k\in\{1,2,\cdots, m+n\}$, but the same homology class should match $(0,q_1^+,q_2^+)$, which means that there's no positive puncture. So we conclude that $C\bigcap F_{[\epsilon_1,l]}$ is a surface without puncture, whose boundary is $-C\bigcap F_{\epsilon_1}$, which implies that $q_1^+=q_2^+=0$, contradicting $m q_1^{+}+n q_2^{+}\ge 1$.

    \item We have either $d_1=1$ and $d_2=0$ or $d_1=0$ and $d_2=1$. WLOG let us assume the first case happens. There are two sub-cases.

    \textbf{Case 2.1} If there is some small $\epsilon_2\in(\delta,\epsilon)$ such that $C\bigcap F_{1-\epsilon_2}=\emptyset$ or $C\bigcap F_{1+\epsilon_2}=\emptyset$, then as before we fix $l\in\{1-\epsilon_2,1+\epsilon_2\}$ such that $C\bigcap F_l=\emptyset$, and look at $C\bigcap F_{[\epsilon_1,l]}$. This part of $C$ can have at most one positive puncture with homology class $(-k,1,1)$ where $k\in\{1,2,\cdots, m+n\}$ and at most one negative puncture with homology class $(-j,0,1)$ for some $j\in\{1,2,\cdots, n\}$. If $C\bigcap F_{[\epsilon_1,l]}$ has no punctures, then we argue as before to show that $q_1^+=q_2^+=0$, which leads to a contradiction. So $C\bigcap F_{[\epsilon_1,l]}$ has at least one puncture, but then again by homology considerations we conclude that $C\bigcap F_{[\epsilon_1,l]}$ has precisely two punctures, with homology classes $(-k,1,1)$ and $(-k,0,1)$ for some $k\in\{1,2,\cdots, n\}$. Now we have:
    \begin{equation}
    (-k,1,1)=(0,q_1^+,q_2^+)+(-k,0,1)
    \end{equation}
    which implies that $q_1^+=1$ and $q_2^+=0$. Now $d_1=1$ and $d_2=0$ tells us that $q_1^-=q_1^+-1=0$ and $q_2^-=q_2^+=0$, contradicting $m q_1^{-}+n q_2^{-}\le -1$.

    \textbf{Case 2.2} The other possibility is that we can find some $\epsilon_2\in(\delta,\epsilon)$ such that $C$ intersects both $F_{1\pm\epsilon_2}$ transversely. We use $(p^{1\pm\epsilon_2},q_1^{1\pm\epsilon_2},q_2^{1\pm\epsilon_2})$ to denote the homology classes of $C\bigcap F_{1\pm\epsilon_2}$. The condition $\eta(C)=0$ now translates to $p^{1\pm\epsilon_2}+m q_1^{1\pm\epsilon_2}+ n q_2^{1\pm\epsilon_2} = 0$, because the wrapping number is now the integral of $dy_R$, which equals the integral of $dy+\beta^{m,n}$. The local energy inequality tells us that:
    \begin{equation}
    p^{1-\epsilon_2}+(1-\epsilon_2)(m q_1^{1-\epsilon_2}+ n q_2^{1-\epsilon_2})>0\end{equation}\begin{equation} p^{1+\epsilon_2}+(1+\epsilon_2)(m q_1^{1+\epsilon_2}+ n q_2^{1+\epsilon_2})>0
    \end{equation}
    which simplifies to
    \begin{equation}
    m q_1^{1+\epsilon_2}+n q_2^{1+\epsilon_2}\ge 1,\ m q_1^{1-\epsilon_2}+n q_2^{1-\epsilon_2}\le -1.
    \end{equation}
    Let $d'_\infty$(resp. $d'_2$) $\in\{0,1\}$ denote the number of punctures of $C_{[1-\epsilon_2,1+\epsilon_2]}$ that project to the positive puncture (resp. the second negative punctures) of $B_0$. We have:
    \begin{equation}
    d'_2(-n,0,1)+(p^{1-\epsilon_2},q_1^{1-\epsilon_2},q_2^{1-\epsilon_2})=d'_{\infty}(-m-n,1,1)+(p^{1+\epsilon_2},q_1^{1+\epsilon_2},q_2^{1+\epsilon_2})
    \end{equation}
    which implies that
    \begin{equation}
    q_1^{1+\epsilon_2}=q_1^{1-\epsilon_2}-d_\infty',\ q_2^{1+\epsilon_2}=q_2^{1-\epsilon_2} +d'_2-d_\infty'.
    \end{equation}
    Again $2\le  m(q_1^{1+\epsilon_2}-q_1^{1-\epsilon_2})+n(q_2^{1+\epsilon_2}-q_2^{1-\epsilon_2})$ tells us that $d'_\infty=0$ and $d'_2=1$, and hence $p^{1-\epsilon_2}-n=p^{1+\epsilon_2}$.

    We now look at $C\bigcap F_{[-\epsilon_1,1+\epsilon_2]}$. This part of $C$ has two outputs with the homology classes $(0,1,0)$ and $(-n,0,1)$, and at most one puncture with homology class $(-k,1,1)$ for some $k\in\{1,2,\cdots, m+n-1\}$. We also have $\partial (C\bigcap F_{[-\epsilon_1,1+\epsilon_2]}) = C\bigcap F_{1+\epsilon_2} - C\bigcap F_{-\epsilon_1}$. We observe that $C\bigcap F_{[-\epsilon_1,1+\epsilon_2]}$ must contain a positive puncture, otherwise $p_1^{1+\epsilon_2}=-n$, and hence $p^{1-\epsilon_2}=0$, so $\eta=0$ implies that $m q_1^{1-\epsilon_2}+ n q_2^{1-\epsilon_2}=0$, contradicting the local energy inequality $m q_1^{1-\epsilon_2}+ n q_2^{1-\epsilon_2}\le-1$.
    Finally, we have:
    \begin{equation}
    (0,q_1^-,q_2^-)+(0,1,0)+(-n,0,1)=(p^{1+\epsilon_2},q_1^{1+\epsilon_2},q_2^{1+\epsilon_2})+(-k,1,1)
    \end{equation}
    which implies that $q_1^-=q_1^{1+\epsilon_2}$ and $q_2^-=q_2^{1+\epsilon_2}$, but then the local energy inequalities $m q_1^{-}+n q_2^{-}\le -1$ and $m q_1^{1+\epsilon_2}+n q_2^{1+\epsilon_2}\ge 1$ cannot both be true. This concludes the proof of Lemma \ref{coproduct no crossing lemma}.
\end{enumerate}
\end{proof}

\section{The ``no crossing'' results for general \texorpdfstring{$J$}{J}}\label{general no crossing}
Although the fibration compatible almost complex structures in Definition \ref{def:fibration_compatible_J} are convenient to work with, they are not suitable for defining the cobordism map. The reason is that for given fibration compatible $J$, not all $J$-holomorphic sections are cut out transversely, so there is not a well defined count for the cobordism map as defined in Section \ref{setup}. In this section, we use the SFT compactness theorem developed in \cite{bourgeois2003compactness} to show that we can always perturb the almost complex structure slightly to a tame almost complex structure - not necessarily fibration compatible, in such a way that the no crossing results Lemma \ref{no crossing lemma} and Lemma \ref{coproduct no crossing lemma} continue to hold.

Throughout this section, we let $X$ denote either the bundle $X_{m,n}$ or $X^{m,n}$. We fix a fibration compatible almost complex structure $J$ on $X$, and denote by $J_+^1$, $J_+^2$, $J_-$ its restrictions on the three cylindrical ends of $X$. In the case that $\partial \Sigma\ne \emptyset$, we choose coordinates $(x_i,y_i)$ near each boundary component of $\partial \Sigma$, such that any almost complex structure we choose, even if it is not fibration compatible elsewhere, is fibration compatible near the boundary and sends $\partial_{x_i}$ to $\partial_{y_i}$.

\begin{theorem}\label{classification for general J}
Let $\{J_k\}$ be a sequence of tame almost complex structures that $C^\infty$ converges to a fixed fibration-compatible almost complex structure $J$, and $\{C_k\}$ be a sequence of finite-energy $J_k$-holomorphic sections, which we view as maps $u_k: B_0\to X$, that are asymptotic to fixed Reeb orbits in $Y_{\phi^m}$, $Y_{\phi^n}$ and $Y_{\phi^{m+n}}$. If all wrapping numbers of $\{C_k\}$ vanish, then $C_k$ is contained in $X_H$ or $X_D$ for sufficiently large $k$.
\end{theorem}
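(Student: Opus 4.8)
The plan is to argue by contradiction and to reduce, via SFT compactness, to the fibration-compatible case already settled in Lemmas \ref{no crossing lemma} and \ref{coproduct no crossing lemma}. Suppose the conclusion fails; after passing to a subsequence, no $C_k$ is contained in either $X_H$ or $X_D$. Then, by the purely topological connectedness argument at the start of the proof of Lemma \ref{no crossing lemma}, each $C_k$ meets $F_{\pm\epsilon_1}$ for every $\epsilon_1\in(\delta,\epsilon)$, or else $F_{1\pm\epsilon_1}$ for every such $\epsilon_1$; passing to a further subsequence we may assume the former holds for all $k$, and we fix one such $\epsilon_1$. Since the asymptotic orbits are fixed and the wrapping numbers vanish, we may assume (as is the case in all our applications, and as follows from the monotonicity hypotheses of Definition \ref{def: bundle monotone}) that the $\Omega_X$-energies of the $C_k$ are uniformly bounded, so the SFT compactness theorem of \cite{bourgeois2003compactness} applies: a subsequence of $\{C_k\}$ converges to a holomorphic building $\mathbf{C}$ with a level mapping to $X$ and, possibly, symplectization levels over the cylindrical ends $\mathbb{R}\times Y_{\phi^m}$, $\mathbb{R}\times Y_{\phi^n}$, $\mathbb{R}\times Y_{\phi^{m+n}}$. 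Because $J_k\to J$ in $C^\infty$ and $J$ is fibration-compatible, every component of $\mathbf{C}$ is holomorphic for $J$, respectively for the induced fibration-compatible almost complex structure on the relevant symplectization.

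The next step is to pin down $\mathbf{C}$. Since $\pi_X\circ u_k=\mathrm{id}_{B_0}$ and the holomorphic projection $\pi_X$ sends any sphere bubble of the $X$-level to a holomorphic sphere in the open surface $B_0$, hence to a point, every sphere bubble lies in a single fiber $\pi_X^{-1}(p)\cong\Sigma$; the hypotheses on $\Sigma$ (nonempty boundary, or genus $\ge 2$) make every fiber aspherical, so there are no nonconstant fiber bubbles, and the same applies to the symplectization levels. Thus the $X$-level of $\mathbf{C}$ is a genuine section $C_\infty\colon B_0\to X$, and $\mathbf{C}$ consists of $C_\infty$ together with symplectization levels joining, at each puncture, the asymptotic orbit of $C_\infty$ to the corresponding fixed orbit. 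The wrapping numbers are homological intersection numbers with $B_0\times\{P_0\}$, hence additive over the levels of $\mathbf{C}$ and invariant under the limit; by Remark \ref{remark_nonnegative_wrapping_number} they are nonnegative on each component, and they vanish on each $C_k$, so they vanish on every component of $\mathbf{C}$. Therefore Lemma \ref{no crossing lemma} (resp.\ Lemma \ref{coproduct no crossing lemma}) applies to $C_\infty$ and shows it is contained in $X_H$ or in $X_D$; and each symplectization level, being holomorphic for a fibration-compatible almost complex structure and of vanishing wrapping number, obeys the local energy inequality of Lemma \ref{local energy lemma} in the form valid for the symplectization $\mathbb{R}\times Y_{\phi^\ell}$ (Lemma 3.11 of \cite{hutchings2005periodic}), hence satisfies the analogous no-crossing property: it is confined over $N$ or over $\Sigma_0$.

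It remains to propagate this through the ends and conclude. Tracking the fixed orbits through $\mathbf{C}$: the asymptotic orbit of $C_\infty$ at each puncture is joined to the corresponding fixed orbit by a symplectization level that does not cross a collar, so the fixed orbit lies on the same side of the left collar as $C_\infty$, and the symplectization levels lie over that side; hence $\mathbf{C}$, as a subset, is contained either in $X_D$ together with symplectization levels over (the preimage of) $N$, or in $X_H$ together with symplectization levels over $\Sigma_0$. As $X_H$ and $X_D$ are open and $C_k\to\mathbf{C}$, it follows that $C_k\subset X_D$ (resp.\ $C_k\subset X_H$) for all large $k$, contradicting the crossing assumption; the argument for $X^{m,n}$ is identical with Lemma \ref{coproduct no crossing lemma} in place of Lemma \ref{no crossing lemma}.

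I expect the delicate point to be precisely the end bookkeeping in the previous paragraph: showing that the crossing exhibited by each $C_k$ cannot escape into a breaking at a puncture, i.e.\ that the no-crossing dichotomy genuinely persists through the symplectization levels and is compatible with the fixed asymptotic orbits. The subtlety is concentrated at the two orbit families sitting in the collars $\{x=0\}$ and $\{x=1\}$, which lie on the ``seam'' between the twist region and its complement; handling these requires the same wrapping-number and local-energy analysis used to establish the splitting (\ref{decomposition}) in \cite{hutchings2005periodic}, and one must check it applies to arbitrary (not merely index-one) holomorphic buildings in $\mathbb{R}\times Y_{\phi^\ell}$.
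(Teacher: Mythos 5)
Your overall strategy is the same as the paper's: argue by contradiction, apply SFT compactness to extract a limiting building, rule out bubbling by asphericity, use additivity and nonnegativity of wrapping numbers to see that every component of the building has vanishing wrapping numbers, apply the no-crossing lemmas to the main level, and then argue that the symplectization levels cannot carry the crossing either.

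However, the part you explicitly flag as the ``delicate point'' -- the end bookkeeping at the seam orbits $\{x=0\}$ and $\{x=1\}$ -- is precisely the technical content of the proof, and in your write-up it is named but not supplied. What is needed is a finer classification than ``each symplectization level is confined over $N$ or over $\Sigma_0$.'' The paper proves (Lemma~\ref{symp cylinders}) that for a fibration-compatible $J$ on $\mathbb{R}\times Y_{\phi^\ell}$, a cylinder with vanishing wrapping numbers not only avoids crossing but satisfies an asymmetric dichotomy at the collars: if its \emph{positive} end is over $x=0$ or $x=1$, then the whole cylinder is pinned inside a thin slab $F_{(-\tilde\epsilon,\tilde\epsilon)}$ (resp.\ $F_{(1-\tilde\epsilon,1+\tilde\epsilon)}$), so in particular it stays inside both $Y_D$ and $Y_H$; whereas if its \emph{negative} end is over $x=0$ or $x=1$, the cylinder must lie entirely over $\Sigma_0$. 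This asymmetry is what lets the paper propagate inductively from the main level to all symplectization levels, and it does not follow merely from ``the analogous no-crossing property.'' Without it, the sentence ``the fixed orbit lies on the same side of the left collar as $C_\infty$, and the symplectization levels lie over that side'' is an assertion, not an argument: when $C_\infty$ asymptotes to an orbit on the seam, the attached cylinder could a priori go either way, and one must rule this out case by case.

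A second, smaller omission: the final openness step is not quite correct as phrased. It is not enough that $X_H$ and $X_D$ are open and $C_k\to\mathbf{C}$, because $\mathbf{C}$ could touch the topological boundary between the regions (indeed it does, at the seam orbits). The paper preempts this by working with the shrunk cover $X_{H,\tilde\epsilon}$, $X_{D,\tilde\epsilon}$ (and $Y_{\cdot,H,\tilde\epsilon}$, $Y_{\cdot,D,\tilde\epsilon}$) with $\tilde\epsilon\in(\delta,\epsilon)$, shows that all of the no-crossing and local-energy statements persist at this scale, and concludes that the entire building lies inside the shrunk region; the uniform cushion between $X_{H,\tilde\epsilon}$ and the complement of $X_H$ then forces $C_k\subset X_H$ (resp.\ $X_D$) for large $k$. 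You should add this cushion to make the last sentence of your proof go through. In short: your outline is correct and matches the paper's, but the lemma you would need to prove to close the gap you yourself identify is exactly the paper's Lemma~\ref{symp cylinders}, and the $\tilde\epsilon$-shrinkage is also needed to convert ``the limit building is on one side'' into ``$C_k$ is on that side.''
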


The proof of Theorem \ref{classification for general J} relies largely on a careful analysis of $J$-holomorphic sections in $X$ and the symplectizations $Y_{\phi^m}$, $Y_{\phi^n}$ and $Y_{\phi^{m+n}}$, which we take up in the following subsections. To begin the proof of Theorem \ref{classification for general J}, let us make the following simple observation. We could slightly shrink the two open subsets $X_H$ and $X_D$ to $X_{H,\Tilde{\epsilon}}$ and $X_{D,\Tilde{\epsilon}}$, where $\Tilde{\epsilon}\in (\delta, \epsilon)$ and
\begin{equation}
    X_{D,\Tilde{\epsilon}}:=B_0\times (-\Tilde{\epsilon},1+\Tilde{\epsilon})_x\times S^1_y
\end{equation}
\begin{equation}
    X_{H,\Tilde{\epsilon}}:=B_0\times (\Sigma-(\Tilde{\epsilon},1-\Tilde{\epsilon})_x\times S^1_y)
\end{equation}
such that Lemma \ref{no crossing lemma} and Lemma \ref{coproduct no crossing lemma} still hold for the new cover $X=X_{D,\Tilde{\epsilon}}\cup X_{H,\Tilde{\epsilon}}$.

\subsection{ \texorpdfstring{$J$}{J}-holomorphic cylinders in symplectizations}
The next step is to analyze $J$-holomorphic cylinders in the symplectization $\R\times Y_{\phi^m}$. The analysis for the remaining cases of $Y_{\phi^n}$ and $Y_{\phi^{m+n}}$ are analogous. Similar to what we saw in Section \ref{setup}, there is a decomposition of $Y_{\phi^m}$:
\begin{equation}
    Y_{m,D,\Tilde{\epsilon}}:=S^1_t\times (-\Tilde{\epsilon},1+\Tilde{\epsilon})_x\times S^1_y
\end{equation}
\begin{equation}
   Y_{m,H,\Tilde{\epsilon}}:=S^1_t \times (\Sigma-(\Tilde{\epsilon},1-\Tilde{\epsilon})_x\times S^1_y).
\end{equation}
When $\Tilde{\epsilon} = \epsilon$, without causing confusions, we will abbreviate the two components by $Y_D$ and $Y_H$ respectively. The gluing map of $Y_{m,D,\Tilde{\epsilon}}$ and $Y_{m,H,\Tilde{\epsilon}}$ is defined similarly as in Section \ref{setup}. For $J$-holomorphic sections in $\R\times Y_{\phi^m}$, the wrapping numbers are defined similarly, see \cite{hutchings2005periodic} Definition 4.2.

As in Section \ref{section:no_crossing}, by slightly abusing the notations, let us denote by $F_x$ the three-dimensional manifold $\R\times S^1_t \times \{x\}\times S^1_y\subset Y_D$. Let $F_{(x_1,x_2)}$ denote the four-manifold $\R\times S^1_t \times (x_1,x_2)_x\times S^1_y\subset Y_D$.
We also identify the first homology class in $Y_D$ with a pair $(p,q)$. Fix a symplectization compatible (and hence by Definition \ref{def:fibration_compatible_J}, a fibration compatible) almost complex structure $J$ on $\R\times Y_{\phi^m}$. The local energy inequality for $J$-holomorphic sections in $\R\times Y_{\phi^m}$ is the following:
\begin{lemma}[\cite{hutchings2005periodic} Lemma 3.11] \label{local energy symp}
Let $C$ be a $J$-holomorphic section, which we write as a map $u: \R\times S^1\to \R\times Y_{\phi^m}$. Assume that $C$ intersects $F_x$ transversely for some $x$ in the unperturbed range. We have:
\begin{equation}
    p+mxq\ge 0.
\end{equation}
Furthermore, the equality holds if and only if $C\cap F_x=\emptyset$.
\end{lemma}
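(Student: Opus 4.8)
The plan is to run the argument of Lemma~\ref{local energy lemma} almost verbatim, with the thrice-punctured base $B_0$ replaced by the cylinder $\R_s\times S^1_t$ and the closed one-form $\beta_{m,n}$ replaced by $m\,dt$; the only point that is genuinely new, relative to Lemma~\ref{local energy lemma}, is the sharp characterization of the equality case, since here $C$ is allowed to be horizontal (or to miss $F_x$ entirely).

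First I would record the normal form. On $Y_{m,D,\tilde\epsilon}$, over the unperturbed range of $x$, the map $\phi^m$ agrees with the unperturbed twist, so by the construction of Section~\ref{setup} (the formula $dx\wedge dy+d(\tfrac12 x^2\beta)$ with $\beta=m\,dt$) the fiberwise symplectic form of $Y_{\phi^m}$ is $\omega_{\phi^m}=dx\wedge dy+d(\tfrac m2 x^2\,dt)=dx\wedge(dy+mx\,dt)$. In particular, on that region $\omega_{\phi^m}$ is exact with primitive $\lambda:=x\,dy+\tfrac m2 x^2\,dt$, the horizontal distribution equals $\ker\omega_{\phi^m}$, and $\ker\omega_{\phi^m}\subseteq\ker dx$. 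Because $J$ is symplectization-compatible, hence fibration-compatible, the pointwise inequality $u^*\omega_{\phi^m}(v,j_0 v)\ge 0$ holds, with equality exactly when $du(v)\in\mathrm{Hor}$, just as for \eqref{local ineq}. Next, given $x$ in the unperturbed range with $C$ transverse to $F_x$, I would invoke Sard's theorem to choose $\epsilon'>0$, small enough that $[x-\epsilon',x+\epsilon']$ stays in the unperturbed range, with both $x_1:=x-\epsilon'$ and $x_2:=x+\epsilon'$ regular values of the function $x\circ u$ on the domain. Then $C\cap F_{x_1}$ and $C\cap F_{x_2}$ are disjoint unions of embedded circles (possibly empty), and $C\cap F_{[x_1,x_2]}$ is a compact surface with boundary $C\cap F_{x_2}-C\cap F_{x_1}$: compactness holds because the Reeb orbits asymptotic to $C$ all sit at $x=k/m$ (inside the perturbed range) or in $Y_H$, so near $s=\pm\infty$ the section $C$ leaves the slab $\{x_1\le x\le x_2\}$, which therefore contains no punctures of $C$. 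With the orientation convention of Lemma~\ref{local energy lemma}, the three slices $C\cap F_{x_1}$, $C\cap F_x$, $C\cap F_{x_2}$ all carry the same class $(p,q)\in H_1(Y_D)$.

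Now Stokes' theorem together with the pointwise inequality gives
\[
0\ \le\ \int_{C\cap F_{[x_1,x_2]}}u^*\omega_{\phi^m}\ =\ \int_{C\cap F_{x_2}}u^*\lambda-\int_{C\cap F_{x_1}}u^*\lambda\ =\ \bigl(x_2p+\tfrac m2 x_2^2 q\bigr)-\bigl(x_1p+\tfrac m2 x_1^2 q\bigr)\ =\ 2\epsilon'\bigl(p+mxq\bigr),
\]
so $p+mxq\ge 0$. For the equality statement: if $C\cap F_x=\varnothing$, then $(p,q)=(0,0)$ and $p+mxq=0$. Conversely, if $p+mxq=0$, then the nonnegative $2$-form $u^*\omega_{\phi^m}$ integrates to zero over $C\cap F_{[x_1,x_2]}$, hence vanishes identically there; thus $du(v)\in\mathrm{Hor}\subseteq\ker dx$ for every $v$ over $F_{(x_1,x_2)}$, i.e.\ $x\circ u$ is locally constant on the open set $u^{-1}\bigl(F_{(x_1,x_2)}\bigr)$. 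Were $C\cap F_x$ nonempty, then at a preimage point of $F_x$ the image of $du$ would lie in $\ker dx=TF_x$, so $\operatorname{im}(du)+TF_x=TF_x$ would be $3$-dimensional, contradicting transversality of $C$ with the $3$-manifold $F_x$ inside the $4$-manifold $\R\times Y_{\phi^m}$. Hence equality forces $C\cap F_x=\varnothing$.

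The only step I expect to require genuine care is this equality analysis: one must be sure that vanishing of the total integral really forces the pointwise tangency $\operatorname{im}(du)\subseteq TF_x$ at every intersection point (this is where the pointwise inequality and its equality case are used), and that such a tangency is incompatible with the transversality hypothesis. Everything else is the computation of Lemma~\ref{local energy lemma} under the substitutions $\beta_{m,n}\rightsquigarrow m\,dt$ and $(q_1,q_2)\rightsquigarrow(q,0)$, together with the observation that the slab $\{x_1\le x\le x_2\}$ contains no Reeb orbits, so the relevant piece of $C$ is compact and unpunctured.
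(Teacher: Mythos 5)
Your proof is correct and takes exactly the route the paper gestures at: the paper's own ``proof'' of Lemma~\ref{local energy symp} is the one-line remark that it is ``a straightforward generalization of Lemma~\ref{local energy lemma},'' so you have just carried out that generalization. Your Stokes computation with the primitive $\lambda = x\,dy + \tfrac{m}{2}x^2\,dt$ and the resulting $2\epsilon'(p+mxq)\ge 0$ matches the earlier lemma under the substitution $\beta\rightsquigarrow m\,dt$. Your treatment of the equality case (which the paper does not spell out and which the earlier Lemma~\ref{local energy lemma} phrases in terms of ``non-horizontal'' rather than ``$C\cap F_x=\emptyset$'') is also sound: the vanishing of the integral against the pointwise inequality forces $\operatorname{im}(du)\subseteq\mathrm{Hor}\subseteq\ker dx$ at any preimage of $F_x$, which is incompatible with the transversality hypothesis. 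The one thing worth stating more explicitly is the pointwise tameness step you are relying on, namely that $u^*\omega_{\phi^m}(\partial_s,\partial_t)=\omega_{\phi^m}(v_V,Jv_V)\ge 0$ with equality iff the vertical component $v_V$ of $du(\partial_s)$ vanishes; this is what identifies the equality locus with horizontality and is the symplectization analogue of~\eqref{local ineq}. With that observation made explicit, the argument is complete.
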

\begin{proof}
This is a straightforward generalization of Lemma \ref{local energy lemma}. For a different proof, see \cite{hutchings2005periodic}.
\end{proof}

The above inequality implies the following ``no crossing'' result for $J$-holomorphic cylinders in $ \R\times Y_{\phi^m}$:
\begin{lemma}\label{symp cylinders}
Let $J$ be a symplectization-compatible almost complex structure on $ \R\times Y_{\phi^m}$. If $C$ is a $J$-holomorphic section of the bundle $\R\times Y_{\phi^m}\to \R\times S^1_t$ with vanishing wrapping numbers, then:
\begin{enumerate}
\item For any $\Tilde{\epsilon}\in (\delta,\epsilon)$, the section $C$ is either contained in $\R\times Y_{m,D,\Tilde{\epsilon}}$ or $\R\times Y_{m,H,\Tilde{\epsilon}}$;
\item For such a section, if the positive end is one of the two orbits over $x=0$ (resp. $x=1$), then for any $\Tilde{\epsilon}\in (\delta,\epsilon)$, $C$ is contained in $F_{(-\Tilde{\epsilon},\Tilde{\epsilon})}$ (resp. $F_{(1-\Tilde{\epsilon},1+\Tilde{\epsilon})}$);
\item For such a section, if the negative end is one of the two orbits over $x=0$ (resp. $x=1$), then for any $\Tilde{\epsilon}\in (\delta,\epsilon)$,  the section $C$ is contained in $\R\times Y_{m,H,\Tilde{\epsilon}}$;
\item Finally, if such a section does not have any end over $x=0$ or $x=1$, then it is completely contained in $\R\times (Y_D-Y_H)$ or $\R\times (Y_H-Y_D)$.
\end{enumerate}
\end{lemma}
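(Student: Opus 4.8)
The plan is to deduce all four statements from the local energy inequality (Lemma \ref{local energy symp}) together with a homology bookkeeping argument parallel to the one in the proof of Lemma \ref{no crossing lemma}, using the fact that in the symplectization the total homology class $(p,q)$ of $C\cap F_x$ is the same for all $x$ in a given interval free of Reeb orbits (since $C\cap F_{[x_1,x_2]}$ provides the cobordism between $C\cap F_{x_1}$ and $C\cap F_{x_2}$, up to the punctures asymptotic to orbits over $x=0$ or $x=1$). Recall also that the wrapping number being zero forces $p=0$ at any slice in the unperturbed range with $F_x$ intersected transversely, by the usual argument (choosing the test point $P_0$ on the circle $\{x\}\times S^1_y$ and reading off $p$ as the intersection count). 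So on any such slice $mxq \ge 0$ with equality iff $C\cap F_x=\emptyset$; since $q$ is an integer, away from the slices $x=0,1$ we get $q=0$ on the positive-$x$ side forces $\dots$ — more precisely, for $x$ in the unperturbed range the sign of $q$ is constrained by the sign of $x$.

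First I would prove (4): if $C$ has no end over $x=0$ or $x=1$, then $C\cap F_{[x_1,x_2]}$ is an honest cobordism (no punctures) for any $x_1<x_2$ in, say, $(\delta,1-\delta)$ or in $(-\epsilon,-\delta)\cup(1+\delta,1+\epsilon)$, so $(p,q)$ is locally constant in $x$ on each such interval. Combined with $p=0$ and $mxq\ge0$ with equality iff the slice is empty, a section that meets some $F_{x_0}$ with $x_0>0$ must have $q\ge1$ there, hence $q\ge1$ on the whole connected range of $x$-values it occupies; if that range crossed $x=0$ we would get $mx q<0$ for small $x<0$ in the range, a contradiction. Hence the range of occupied $x$-values lies entirely in $(-\epsilon,0)$, in $(0,1)$, or in $(1,1+\epsilon)$ — but the first and third force $q\le-1$ resp. are symmetric, and in all cases the section cannot meet both $Y_D\setminus Y_H$ and $Y_H\setminus Y_D$. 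I would then get (1) similarly: if $C$ met both $F_{\tilde\epsilon}$ and $F_{-\tilde\epsilon}$ transversally with nonempty intersections, the slices have $q^+\ge1$ and $q^-\le-1$, but between them $C$ can only have punctures over $x=0$, whose Reeb orbits have homology class $(0,1)$ in $Y_D$; matching $(0,q^-)+d_{-\infty}(0,1)=(0,q^+)+d_{+\infty}(0,1)$ (where $d_{\pm\infty}\in\{0,1\}$ count negative/positive punctures over $x=0$) gives $q^+-q^-=d_{+\infty}-d_{-\infty}\in\{-1,0,1\}$, contradicting $q^+-q^-\ge2$. The same works at $x=1$ using the orbit class $(-1,1)$ — note there $\eta=0$ reads $p+mq=0$, consistent with $(-1,1)$ being the orbit class. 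For (2), if the positive end is an orbit over $x=0$ (class $(0,1)$), suppose $C$ also met $F_{\tilde\epsilon}$ transversally with nonempty intersection; then $q\ge1$ on that slice, and running the cobordism argument on $C\cap F_{[\tilde\epsilon,\,\cdot\,]}$ — which has no positive puncture left and at most negative punctures over $x=0$ or $x=1$ — forces a contradiction with the orbit homology classes, so $C$ stays in $x<\tilde\epsilon$; symmetrically it stays in $x>-\tilde\epsilon$, giving containment in $F_{(-\tilde\epsilon,\tilde\epsilon)}$. For (3), if the negative end is an orbit over $x=0$ or $x=1$, the point is that a negative puncture over $x=0$ has class $(0,1)$ with $p=0$, which is compatible with the section living in $Y_{m,H,\tilde\epsilon}$; one shows $C$ cannot have positive-$x$ slices with $q\ge1$ in the unperturbed range by the same cobordism/orbit-class matching, hence it is confined to the complement of the twist core, i.e. $\R\times Y_{m,H,\tilde\epsilon}$.

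The main obstacle I expect is the careful case analysis needed to rule out the ``mixed'' configurations in (2) and (3): one has to track, for the relevant truncated piece $C\cap F_{[a,b]}$, exactly which punctures (over $x=0$ versus $x=1$, positive versus negative) can appear, write down the homology balance in $H_1(Y_D)=\Z^2$ using the orbit classes $(0,1)$ at $x=0$ and $(-1,1)$ at $x=1$, and check that the local energy inequalities $p+mxq\ge0$ at the two ends of the piece cannot be simultaneously satisfied with the correct signs. The bookkeeping is mechanical but must be done without sign errors, and one must be careful that the slices $F_a,F_b$ are chosen transverse to $C$ and in the unperturbed range, and that ``transversely'' can always be arranged by a small perturbation of $a,b$ (the set of bad $x$ is measure zero since $C$ is a section and hence the projection to the $x$-coordinate is, generically, a submersion away from critical slices). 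Everything else is a direct adaptation of the arguments already carried out for $X_{m,n}$ in the proof of Lemma \ref{no crossing lemma}.
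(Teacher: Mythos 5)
Your overall strategy — the local energy inequality from Lemma \ref{local energy symp} plus the same kind of $H_1(Y_D)$ bookkeeping used for Lemma \ref{no crossing lemma} — is exactly what the paper does, and your treatment of items (1), (2), (4) is essentially the paper's argument (the paper proves them in the order (1)–(4), but the order is immaterial). There is one place where you deviate in a way worth flagging, and one place where you should be a bit more careful.

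For item (3) the paper does not run a fresh cobordism/homology-matching argument as you sketch; it simply reduces to (1) and (2). If $C$ is not contained in $\R\times Y_{m,H,\tilde\epsilon}$, then by (1) it is contained in $\R\times Y_{m,D,\tilde\epsilon}$, hence entirely in the twist region where the class of a Reeb orbit over $x=\tfrac{i}{m}$ is $(-i,1)\in H_1(Y_D)$. Since the positive and negative ends of a cylinder in a symplectization are homologous, and these classes are pairwise distinct, both ends must lie over the \emph{same} $x$, which by hypothesis is $x=0$ or $x=1$. Now (2) applies to the positive end and forces $C\subset F_{(-\tilde\epsilon,\tilde\epsilon)}$ (or $F_{(1-\tilde\epsilon,1+\tilde\epsilon)}$), both of which lie inside $\R\times Y_{m,H,\tilde\epsilon}$. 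This is shorter and sidesteps the case analysis you were worried about; your more direct route would eventually need exactly this kind of homology-of-orbits argument anyway, so it is worth internalizing the reduction.

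For item (4), be careful with the claim that $(p,q)$ is ``locally constant in $x$'' across the whole interval $(\delta,1-\delta)$: $C$ may well have ends over $x=\tfrac{i}{m}$ for $1\le i\le m-1$, and those contribute punctures of class $(-i,1)$ which change $(p,q)$. The paper avoids this entirely by only looking at the four boundary slices $F_{\pm\epsilon_1}$ and $F_{1\pm\epsilon_1}$, for which the relevant cobordism pieces $C\cap F_{[-\epsilon_1,\epsilon_1]}$ and $C\cap F_{[1-\epsilon_1,1+\epsilon_1]}$ contain no punctures at all (that is what the hypothesis ``no end over $x=0,1$'' buys you), so $q^+=q^-$ and then the two-sided local energy inequality forces $q^\pm=0$ and hence empty intersection with those four slices. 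Connectedness of $C$ and the fact that the ends are not over $x=0$ or $x=1$ then confines $C$ to $Y_D-Y_H$ or $Y_H-Y_D$. Also, a small sign slip: from $d_{-\infty}(0,1)+(0,q^-)=d_{\infty}(0,1)+(0,q^+)$ one gets $q^+-q^-=d_{-\infty}-d_{\infty}$, not $d_{+\infty}-d_{-\infty}$; it does not change your conclusion since you only use $|q^+-q^-|\le 1$, but it is worth getting right.
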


The proof is similar to that of Lemma \ref{no crossing lemma}, but it is worthwhile to write down the details.
\begin{proof}
For any $\epsilon_1\in (\delta,\epsilon)$, let us denote the homology classes of $C\cap F_{\pm \epsilon_1}$ by $(p^\pm, q^\pm)$ (the choice of $\epsilon_1$ does not matter here). Since all wrapping numbers of $C$ vanish, we conclude that $p^\pm = 0$. 

To prove the first bullet point, suppose $C$ is not contained in either region. WLOG we could assume there is some $\epsilon_1$ such that  $C\cap F_{\pm \epsilon_1}\ne\emptyset$. Now Lemma \ref{local energy symp} tells us that
\begin{equation}
    m\epsilon_1 q^+>0>m\epsilon_1 q^-.
\end{equation}
So $q^+\ge 1$ and $q^-\le -1$. Notice that for punctures of $C$ that are contained in $F_{[-\epsilon_1,\epsilon_1]}$, the homology class is $(0,1)$. Let us assume there are $d_\infty\in\{0,1\}$ (resp. $d_{-\infty}$) many of such positive (resp. negative) punctures, and we have:
\begin{equation}
    d_{-\infty}(0,1)+(0,q^-) = d_{\infty}(0,1)+(0,q^+).
\end{equation}
But this is not possible, because otherwise
\begin{equation}
    2\le q^+-q^-=d_{-\infty}-d_{\infty}\le 1.
\end{equation}

To prove the second bullet point, it suffices to show that for any $\epsilon_1\in (\delta,\epsilon)$, we have $C\cap F_{\pm \epsilon_1}=\emptyset$ and $C\cap F_{1\pm \epsilon_1}=\emptyset$. WLOG suppose $C\cap F_{ \epsilon_1}\ne \emptyset$ or $C\cap F_{-\epsilon_1}\ne \emptyset$. By the same argument as in the previous paragraph, we have $q^+-q^-\ge 1$, but now we have $d_\infty = 1$, so
\begin{equation}
    1\le q^+-q^- = d_{-\infty}-d_{\infty} \le 0,
\end{equation}
a contradiction.

To prove the third bullet point, simply notice that otherwise such a section is completely contained in $\R\times Y_D$ by the first bullet point. Now observe that Reeb orbits in $Y_D$ that are over different values of $x$ have different homology classes, it follows that both ends of $C$ are over $x=0$ or $x=1$. Now the second bullet point shows that such a section is contained in $\R\times Y_{m,H,\Tilde{\epsilon}}$ as well.

Finally, to prove the last bullet point, observe that (in the same notation as before) $d_{\pm\infty}=0$ forces that $q^\pm=0$, hence $C\cap F_{\pm \epsilon_1}=\emptyset$. Similarly $C\cap F_{1\pm \epsilon_1}=\emptyset$ for any $\epsilon_1$.
\end{proof}

\subsection{More about \texorpdfstring{$J$}{J}-holomorphic sections in the twist region}
To prove Theorem \ref{classification for general J}, the final ingredient we need is a more detailed understanding of $J$-holomorphic sections that are contained in the twist region $X_D$. Let us recall that, for $J$-holomorphic sections of $X_{m,n}$ that are contained in the twist region $X_D=B_0\times (-\epsilon, 1+\epsilon)_x\times S^1_y$, the Reeb orbits can occur over:
\begin{enumerate}
    \item $x=\frac{i}{m}$ ($i\in\{0,1,\cdots, m\}$) for the first positive end;
    \item $x=\frac{j}{n}$ ($j\in\{0,1,\cdots, n\}$) for the second positive end;
    \item $x=\frac{k}{m+n}$ ($k\in\{0,1,\cdots, m+n\}$) for the negative end.
\end{enumerate}
The next lemma tells us that for pseudo-holomorphic sections of $X_{m,n}$ that are contained in the twist region, the three ends must in fact lie over the same $x$-coordinate.

\begin{lemma}\label{more in twist}
Let $J$ be a fibration-compatible almost complex structure on $X_{m,n}$, and $C$ be a $J$-holomorphic section that is completely contained in $X_D=B_0\times (-\epsilon, 1+\epsilon)_x\times S^1_y$. Then the $x$ coordinate of the three cylindrical ends of $C$ asymptote to the same value. Furthermore, $C$ itself is completely contained in the $\delta$-neighborhood of the slice $F_x$ (the subscript $x$ denotes the $x$ value to which the ends of $C$ asymptote.)
\end{lemma}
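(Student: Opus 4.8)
The plan is to pin down the $x$-coordinates of the three ends by two rigid constraints — a homological identity and the nonnegativity of the vertical energy $E(C):=\int_{B_0}u^*\omega_X$ — and then to confine the section with the local energy inequality (Lemma~\ref{local energy lemma}). Throughout I write $a_1=i/m$, $a_2=j/n$, $a_3=k/(m+n)$ for the $x$-values to which the first positive, second positive, and negative ends of $C$ asymptote.

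For the first constraint: since $C$ is a section contained in $X_D\simeq B_0\times N$, its compactification is a $2$-chain there, so the homology classes of the three asymptotic Reeb orbits satisfy (first end) $+$ (second end) $-$ (negative end) $=0$ in $H_1(X_D)=\Z^3$; the orbit over $x=a$ at an $\ell$-fold end has $[S^1_y]$-component $-\ell a$ (cf.\ the classes recorded in the proof of Lemma~\ref{no crossing lemma}), so this relation forces $(m+n)a_3=m a_1+n a_2$ — the negative end sits over the $(m,n)$-weighted mean of the two positive ends. For the second constraint: for fibration-compatible $J$ the form $u^*\omega_X$ is pointwise $\ge 0$ by~(\ref{local ineq}) and decays exponentially at the ends (the fiberwise form vanishes on trivial cylinders), so $E(C)$ is finite and $\ge 0$, with equality iff $C$ is horizontal. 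On $X_D$ one has $\omega_X=d\mu$ with $\mu=x\,dy+(\tfrac12 x^2+\Lambda)\beta$ for the globally defined Hamiltonian perturbation $\Lambda$, so Stokes' theorem on a truncated domain (truncation sent to infinity) gives $E(C)=\mathcal A_m(a_1)+\mathcal A_n(a_2)-\mathcal A_{m+n}(a_3)$, where $\mathcal A_\ell(a):=\int_{\Gamma}\mu$ for $\Gamma$ the orbit at $x=a$ on the $\ell$-fold end. Evaluating — using $\beta=\ell\,dt$ there and that the orbit winds $-\ell a$ times around $S^1_y$ — gives $\mathcal A_\ell(a)=-\tfrac12\ell a^2+O(\|h\|)$, where $\|h\|$ bounds the Morse perturbations. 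Feeding in the homological identity,
\[
E(C)=\tfrac12\big[(m+n)a_3^2-m a_1^2-n a_2^2\big]+O(\|h\|)=-\frac{mn(a_1-a_2)^2}{2(m+n)}+O(\|h\|).
\]
Since $a_1,a_2$ are rationals with denominators $m,n$ (hence either equal or $\ge(mn)^{-1}$ apart), $E(C)\ge 0$ forces $a_1=a_2$ once the perturbations are small enough (which we always assume), and then $a_3=a_1=a_2=:a$ by the homological identity.

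For the confinement statement, first shrink $\delta$ below half the minimal gap among the marked points $\{i/m,\,j/n,\,k/(m+n)\}$, so the perturbed ranges are disjoint. If $\sup_C x>a+\delta$, then since $x|_C\to a$ at every end, $x|_C$ attains some regular value $x_*>a+\delta$ lying in the unperturbed range; the piece $\{x|_C\ge x_*\}\subset C$ is then compact (it avoids all the ends) with boundary $C\cap F_{x_*}$, so $[C\cap F_{x_*}]=0$ in $H_1(X_D)$, contradicting Lemma~\ref{local energy lemma} when $C$ is not horizontal. When $C$ is horizontal, the explicit form of horizontal sections in the unperturbed region (proof of Lemma~\ref{no crossing lemma}) already pins $x|_C\equiv a$. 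The symmetric bound on $\inf_C x$ then places $C$ in the $\delta$-neighborhood of $F_a$.

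I expect the real difficulty to be the second constraint: the Stokes bookkeeping for $E(C)$ (orientation conventions at the three punctures, and isolating the perturbation as a uniform $O(\|h\|)$ error), together with the precise homology classes of Reeb orbits in $X_D$, where the gluing twist $g_{m,n}$ intervenes. And the energy input is genuinely necessary — Lemma~\ref{local energy lemma} alone does not give $a_1=a_2=a_3$, since configurations with $a_1<a_3<a_2$ and $k=i+j$ pass every homological and local-energy test available inside the twist region and are excluded only by the strict positivity of $E(C)$.
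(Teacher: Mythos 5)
Your proof is correct in its conclusions, and the confinement argument in your second paragraph is essentially the paper's own: pick a regular slice $F_{x_*}$ in the unperturbed range beyond $a\pm\delta$; the portion of $C$ beyond it is compact and puncture-free, so $[C\cap F_{x_*}]=0$ in $H_1(X_D)$, contradicting the strict inequality in Lemma~\ref{local energy lemma} when $C$ is not horizontal, while horizontality forces local constancy of $x$ in the unperturbed range. Your treatment of the first claim, however, differs genuinely from the paper. You extract the homological identity $(m+n)a_3=ma_1+na_2$ and then apply Stokes' theorem to $\int_{B_0}u^*\omega_X$ (you call this the vertical energy; strictly speaking, the paper's vertical energy $E(u)$ subtracts a horizontal term, but the quantity $\int_{B_0}u^*\omega_X$ is indeed pointwise $\ge 0$ by~(\ref{local ineq}) with equality iff $u$ is horizontal, which is what you use). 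The bookkeeping at the three punctures gives $\int_{B_0}u^*\omega_X=\tfrac12[(m+n)a_3^2-ma_1^2-na_2^2]+O(\|h\|)=-\tfrac{mn(a_1-a_2)^2}{2(m+n)}+O(\|h\|)$, and since $a_1,a_2$ are rationals with denominators $m,n$, nonnegativity forces $a_1=a_2$ and then $a_3=a_1$ once $\|h\|$ is small (uniformly in $i,j,k$, since these range over a finite set inside $X_D$). This is a clean global argument.

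However, your closing remark is wrong: Lemma~\ref{local energy lemma} by itself does rule out $a_1\neq a_2$, and this is exactly how the paper proceeds. Assuming WLOG $a_1<a_3<a_2$, pick a regular $x_0$ in the unperturbed range with $a_3<x_0<a_2$. The slab $C\cap F_{(x_0,\infty)}$ contains exactly one puncture, the second positive end over $a_2$ with class $(-j,0,1)$, so $[C\cap F_{x_0}]=(-j,0,1)$. The local energy inequality then reads $-j+nx_0>0$, i.e.\ $x_0>j/n=a_2$, contradicting the choice of $x_0$. (Applying the lemma at $x_0\in(a_1,a_3)$ works just as well: there $[C\cap F_{x_0}]=(i,-1,0)$, and the inequality gives $x_0<i/m=a_1$, another contradiction.) The relevant slices exist in the unperturbed range as long as $\delta$ is smaller than half the minimal gap among $\{i/m,j/n,k/(m+n)\}$, which is the standing assumption. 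So the global energy argument, while correct and a nice alternative, is not the only route; the local inequality applied at a single well-chosen interior slice already closes the first part.
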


\begin{proof}
The Reeb vector field near the first positive end is $\partial_t-mx\partial_y$, so the homology class of any Reeb orbit over $x=\frac{i}{m}$ is $[S^1_{t_1}]-i[S^1_y]\in H_1(X_D)$. Similarly, for Reeb orbits over the second positive end with the $x$-coordinate $\frac{j}{n}$, the homology class is $[S^1_{t_2}]-j[S^1_y]\in H_1(X_D)$; the homology class for Reeb orbits over the negative end with the $x$-coordinate $\frac{k}{m+n}$ is $[S^1_{t_1}]+[S^1_{t_2}]-k[S^1_y]\in H_1(X_D)$.

It follows that for a $J$-holomorphic section that is completely contained in $X_D$, we have $k=i+j$ for homological reasons. Now if the three ends don't share the same $x$-coordinates, WLOG we can assume that $\frac{i}{m}<\frac{i+j}{m+n}<\frac{j}{n}$. Pick some $x_0\in(\frac{i+j}{m+n},\frac{j}{n})$ such that $C$ intersects the slice $F_{x_0}$ transversely, then the homology class of $C\bigcap F_{x_0}$ in $H_1(X_D)$ is $(-j, 0, 1)$. Using Lemma \ref{local energy lemma}, we have
\begin{equation}
-j+x_0(m\cdot 0+n\cdot 1)\ge 0
\end{equation}
which implies that $x_0\ge \frac{j}{n}$, a contradiction.

Now suppose $C$ is not contained in the $\delta$-neighborhood of the slice $F_x$, we can choose some $\Tilde{\epsilon}$ slightly bigger than $\delta$ such that $C$ intersects $F_{x\pm \Tilde{\epsilon}}$ transversely and the intersect is nonempty. But notice that  $C\cap F_{x\pm \Tilde{\epsilon}}$ are both null-homologous, so this is a violation of Lemma \ref{local energy lemma}.
\end{proof}

\begin{remark}
The second part of the above Lemma also holds for $J$-holomorphic sections of $X^{m,n}$ that are completely contained in the twist region. Namely, if all three ends of such a section share the same $x$-coordinate, then the entire section is contained in the $\delta$-neighborhood of the slice $F_x$.
\end{remark}

\subsection{Proof of ``No-crossing'' for general $J$ (Theorem \ref{classification for general J})}
Now we are ready to prove the main result of this section.

\begin{proof}[Proof of Theorem \ref{classification for general J}]
Fix some $\Tilde{\epsilon}\in(\delta, \epsilon)$. Suppose that the statement of Theorem \ref{classification for general J} fails, by the SFT compactness theorem, we can find a subsequence of $\{C_k\}$, still denoted by $\{C_k\}$, such that :
\begin{enumerate}
    \item For every $k$, $C_k$ is not contained in $X_H$ or $X_D$, and
    \item $\{C_k\}$ converges to a $J$-holomorphic building $\mathcal{B}$.
\end{enumerate}
Let us first observe that by our assumptions, $\pi_2(X_{m,n})$, $\pi_2(X^{m,n})$, $\pi_2(Y_{\phi^m})$ are all trivial, so bubbling off of $J$- holomorphic spheres can not occur in any level of $\mathcal{B}$. Notice also that it is not possible for any component of any level of $\mathcal{B}$ to have only positive or only negative punctures, simply by homological considerations. The above two observations imply that 
\begin{enumerate}
\item $\mathcal{B}$ has no nodes;
\item The main level of $\mathcal{B}$ is a $J$-holomorphic section of $X$, and 
\item Every other level of $\mathcal{B}$ is a (resp. pair of) holomorphic cylinder in the symplectization of $Y_{\phi^{m+n}}$ (resp. $Y_{\phi^{m}}\coprod Y_{\phi^{n}}$).
\end{enumerate}
We note that all levels of $\mathcal{B}$ must have vanishing wrapping numbers. The reason is that the wrapping number is homological, so the sum of the wrapping number from all different level is equal to zero. By Remark \ref{remark_nonnegative_wrapping_number}, all wrapping numbers are non-negative, so they have to vanish in each level as well.

If the main level of $\mathcal{B}$ is contained in $X_{H,\Tilde{\epsilon}}$, we can use Lemma \ref{symp cylinders} and induction to show that all other levels of $\mathcal{B}$ are contained in $\R\times Y_{m+n,H,\Tilde{\epsilon}}$ (or $\R\times (Y_{m,H,\Tilde{\epsilon}}\coprod Y_{n,H,\Tilde{\epsilon}})$ respectively). For example, the first level above the main level consists of one $J_+$-holomorphic cylinder in $\R\times Y_{\phi^{m+n}}$ (if $X=X^{m,n}$) or a pair of $J_+$-holomorphic cylinders in $\R\times (Y_{\phi^m}\coprod Y_{\phi^n})$ (if $X=X_{m,n}$). In either case, those $J_+$-holomorphic cylinders have negative ends which are either over $x=0,1$ inside the twist regions, or outside the twist region. Now the third and forth bullets points of Lemma \ref{symp cylinders} tell us that these cylinders are entirely contained in $\R\times Y_{m+n,H,\Tilde{\epsilon}}$ or $\R\times (Y_{m,H,\Tilde{\epsilon}}\coprod Y_{n,H,\Tilde{\epsilon}})$. We conclude, using induction, that all levels above the main level are contained in the same region. Now let us consider the first level under the main level. For any such $J_-$-holomorphic cylinder, if the positive end is over $x=0,1$, then by the second bullet point of Lemma \ref{symp cylinders}, they are completely contained in $F_{(-\Tilde{\epsilon},\Tilde{\epsilon})}$; if the positive end is contained outside of the twist regions, then the third and forth bullet points of Lemma \ref{symp cylinders} imply that the cylinders are contained in $\R\times Y_{m+n,H,\Tilde{\epsilon}}$ or $\R\times (Y_{m,H,\Tilde{\epsilon}}\coprod Y_{n,H,\Tilde{\epsilon}})$. Again, we can repeat the above analysis to find that all levels under the main level are contained in $\R\times Y_{m+n,H,\Tilde{\epsilon}}$ or $\R\times (Y_{m,H,\Tilde{\epsilon}}\coprod Y_{n,H,\Tilde{\epsilon}})$. In summary, the entire building is contained in the (slightly shrunk) non-twist region, which implies that for sufficiently large $k$, the section $C_k$ is contained in $X_H$ as well, a contradiction.

If the main level of $\mathcal{B}$ is contained in $X_{D,\Tilde{\epsilon}}-X_{H,\Tilde{\epsilon}}$, then again we can use the fourth bullet point of Lemma \ref{symp cylinders} and induction to deduce that all other levels of $\mathcal{B}$ are contained in $\R\times Y_{m+n,D,\Tilde{\epsilon}}$ or $\R\times (Y_{m,D,\Tilde{\epsilon}}\coprod Y_{n,D,\Tilde{\epsilon}})$. It follows that for sufficiently large $k$, the section $C_k$ is completely contained in $X_D$, a contradiction.

\end{proof}

\section{The product}\label{the product}
In this section, we use the no-crossing results to calculate the pair-of-pants product defined in Section \ref{setup}: \begin{equation}\label{product hom}
    HF_*(\phi^m)\otimes HF_*(\phi^n)\longrightarrow HF_*(\phi^{m+n})
\end{equation}
where $\phi$ is the (Hamiltonian perturbed) positive Dehn twist along a homologically nontrivial simple closed curve $\gamma\subset \Sigma$, with the extra conditions stated in Theorem \ref{product for multiple twists}.

As reviewed in Section \ref{setup}, we fix the cobordism $X=X_{m,n}$ and a generic Hamiltonian perturbation. We always assume that the almost complex structure $J$ is $C^\infty$ close to a fibration-compatible one, as in Section \ref{section:no_crossing}. Furthermore, we require that near each boundary component of $\partial\Sigma$ with local coordinates $(x_i,y_i)$, the almost complex structure $J$ is fibration-compatible, and is induced from the almost complex structure on $\Ver$ that sends $\partial_{x_i}$ to $\partial_{y_i}$. Lemma \ref{mp} tells us that $J$-holomorphic sections cannot approach $\partial\Sigma$ by the maximum principle.

\subsection{Several remarks on monotonicity}
To define fixed point Floer homology without using the Novikov rings, we need a monotonicity condition. In what follows, we will use a slightly stronger version of ``weak monotonicity'' introduced in \cite{cotton2009symplectic}.

\begin{definition}\label{monotone def}
(\cite{cotton2009symplectic} Condition 2.5) Let $\psi$ be a symplectomorphism of $(\Sigma, \omega_0)$. Let $\omega_\psi$ denote the 2-form on the mapping torus $Y_\psi$ induced by $\omega_0$, and $\Ver$ the vertical distribution of $Y_\psi\to S^1$. We say $\psi$ is weakly monotone if $[\omega_\psi]$ vanishes on the kernel of
\begin{equation}
    c_1(\Ver): H_2(Y_\psi)\longrightarrow\R.
\end{equation}
\end{definition}

We have the following:
\begin{lemma}\label{symp monotone}
For the positive Dehn twist $\phi: (\Sigma,\omega_0)\to (\Sigma,\omega_0)$, the map $\phi^m$ is weakly monotone for any positive integer $m$.
\end{lemma}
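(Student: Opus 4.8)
The plan is to verify the weak monotonicity condition directly by understanding $H_2(Y_{\phi^m})$, the class $[\omega_{\phi^m}]$, and the first Chern class of the vertical distribution, for the specific case at hand where $\phi$ is a positive Dehn twist along a homologically nontrivial simple closed curve $\gamma$. First I would compute $H_2(Y_{\phi^m})$ using the Wang exact sequence associated with the mapping torus fibration $\Sigma \hookrightarrow Y_{\phi^m} \to S^1$: the relevant portion reads
\begin{equation}
H_2(\Sigma) \xrightarrow{(\phi^m)_* - \mathrm{id}} H_2(\Sigma) \to H_2(Y_{\phi^m}) \to H_1(\Sigma) \xrightarrow{(\phi^m)_* - \mathrm{id}} H_1(\Sigma).
\end{equation}
Since $\phi$ is a Dehn twist, $(\phi^m)_* - \mathrm{id}$ on $H_1(\Sigma)$ is the Picard--Lefschetz map: it sends a class $a$ to $\pm m \langle a, [\gamma]\rangle [\gamma]$. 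Thus $H_2(Y_{\phi^m})$ fits in a short exact sequence with $H_2(\Sigma) = \Z$ (the fiber class, since $(\phi^m)_* = \mathrm{id}$ on $H_2$) on the left and the kernel of the Picard--Lefschetz map on $H_1(\Sigma)$ on the right. Classes in this kernel are represented by tori $T_a$ fibered over $S^1$ with monodromy identifying a cycle representing $a$ (chosen disjoint from or arranged carefully with respect to $\gamma$), plus the fiber class itself.

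Next I would evaluate both $[\omega_{\phi^m}]$ and $c_1(\Ver)$ on these generators. On the fiber class $[\Sigma]$, we have $\langle [\omega_{\phi^m}], [\Sigma]\rangle = \int_\Sigma \omega_0 > 0$ and $\langle c_1(\Ver), [\Sigma]\rangle = \chi(\Sigma)$, so the fiber class is not in the kernel of $c_1(\Ver)$ unless $\chi(\Sigma) = 0$, in which case we must check $[\omega_{\phi^m}]$ vanishes on it — but it does not (the area is positive), so either way we need the kernel of $c_1(\Ver)$ to be spanned appropriately. The key point is that on a torus class $T_a$ coming from an embedded curve of self-intersection zero (which is automatic for a simple closed curve on a surface) carried by a $\phi^m$-invariant curve, the vertical bundle restricted to $T_a$ is trivial (it is the pullback of $T\Sigma$ restricted to a circle, over a torus, which is trivial since $T\Sigma|_{S^1}$ is trivial), so $\langle c_1(\Ver), T_a\rangle = 0$. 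Meanwhile $\langle [\omega_{\phi^m}], T_a\rangle$ is the symplectic area swept out, which for a Lagrangian-type torus (a circle times $S^1_t$) is zero because $\omega_{\phi^m}$ restricts to zero on such a torus — one can arrange the representative so that the curve stays inside a region where the flux vanishes, or compute directly that $[\omega_{\phi^m}]$ evaluated on a torus built from a loop in $\Sigma$ transported by the monodromy equals the flux of that loop, which vanishes for the relevant classes. So the plan is: show $\ker c_1(\Ver)$ is spanned by such torus classes $T_a$, and show $[\omega_{\phi^m}]$ vanishes on each of them.

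The main obstacle I anticipate is the careful bookkeeping of which $H_1(\Sigma)$ classes actually lift to $H_2(Y_{\phi^m})$ and producing explicit embedded representatives on which both evaluations are transparent — in particular handling the separating versus non-separating cases for $\gamma$, and making sure that when a class $a$ has $\langle a, [\gamma]\rangle \neq 0$ it genuinely fails to lift (so we never have to worry about it), while classes with $\langle a,[\gamma]\rangle = 0$ can be represented by curves disjoint from $\gamma$, hence fixed by $\phi^m$, giving honest product tori $S^1 \times S^1_t$ on which $\omega_{\phi^m}$ and $c_1(\Ver)$ both vanish. A secondary point is that one should double-check the claim using the hypotheses on $\Sigma$ (boundary nonempty or genus $\geq 2$, etc., as in Theorem \ref{product for multiple twists}); however, for weak monotonicity alone I expect the argument to go through for any $\Sigma$, since the positive area of $\omega_0$ on the fiber and the vanishing of flux on disjoint-curve tori are the only inputs. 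I would also remark that an alternative, perhaps cleaner, route is to invoke that $Y_{\phi^m}$ carries a fiberwise symplectic form making it a stable Hamiltonian structure and that for surfaces the relevant cohomological computation reduces to checking the class of $\omega_{\phi^m}$ lies in the span of $c_1(\Ver)$ and pullbacks from the base, which holds here because $H^2(Y_{\phi^m};\R)$ is small — but I would present the direct computation as the primary argument.
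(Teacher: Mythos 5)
Your argument is sound in its essentials but takes a genuinely different route from the paper's. The paper simply cites \cite{hutchings2005periodic}, Lemma 5.1 (whose argument reappears, adapted to the cobordism, in the proof of Lemma \ref{cobordism monotone}): introduce the wrapping number $\eta(C)$, establish the relation $\langle c_1(\Ver),[C]\rangle = (2-2g(\Sigma))\,\eta(C)$ when $\Sigma$ is closed and $\gamma$ is non-separating (with the obvious modifications when $\gamma$ separates or $\partial\Sigma\ne\emptyset$), deduce $\eta(C)=0$ for $[C]\in\ker c_1(\Ver)$, and then observe that $\int_C\omega_\phi$ is a linear combination of the coefficients $p$ and $mq$ of $[C\cap F_x]\in H_1(Y_D)$, both of which vanish once $\eta(C)=0$. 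You instead compute $H_2(Y_{\phi^m})$ from the Wang exact sequence, identify explicit generators --- the fiber class plus product tori $T_a$ over $1$-cycles $a$ in $\ker((\phi^m)_*-\mathrm{id})=\{a:\langle a,[\gamma]\rangle=0\}$, chosen disjoint from $\gamma$ --- and evaluate $c_1(\Ver)$ and $[\omega_{\phi^m}]$ on each generator directly: $\Ver|_{T_a}$ is trivial, and $\omega_{\phi^m}|_{T_a}=0$ since $\iota_{\partial_t}\omega_{\phi^m}=0$. This is a clean, self-contained alternative that makes the homology fully explicit; the paper's wrapping-number route is less explicit here but pays off because it transfers with no extra work to the cobordism $X_{m,n}$, where computing $H_2$ by hand would be messier.

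One point you flag but do not resolve correctly: you write that for weak monotonicity alone you ``expect the argument to go through for any $\Sigma$.'' That is not so. If $\Sigma$ is a closed torus, $c_1(\Ver)\equiv 0$, so $\ker c_1(\Ver)=H_2(Y_{\phi^m})$, while $[\omega_{\phi^m}]$ is strictly positive on the fiber class; weak monotonicity in the sense of Definition \ref{monotone def} genuinely fails. The hypothesis ``$\partial\Sigma\ne\emptyset$, or $\Sigma$ closed of genus $\ge 2$'' (exactly as in Theorem \ref{product for multiple twists}) is being tacitly assumed in the paper's lemma --- this is visible in the footnote-style proof's remark about $\langle[\Sigma],c_1(\Ver)\rangle=2-2g(\Sigma)$ --- and should be stated in yours. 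With that hypothesis in place, your argument is correct: the Picard--Lefschetz description of $\ker((\phi^m)_*-\mathrm{id})$, the representability of annihilator classes by curves disjoint from $\gamma$, and the two vanishing computations on the tori $T_a$ all go through, and when $\chi(\Sigma)\ne 0$ (or $H_2(\Sigma)=0$) the fiber class contributes nothing to $\ker c_1(\Ver)$.
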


\begin{proof}
The proof is almost verbatim to that of \cite{hutchings2005periodic} Lemma 5.1, the only difference is that in our setting $\langle [\Sigma],c_1(\Ver)\rangle = 2-2g(\Sigma)$ if $\partial\Sigma=\emptyset$.
\end{proof}

The above lemma tells us that the count in (\ref{count}) is finite, so the fixed point Floer homology $HF_*(\phi^m)$ is well-defined without using the Novikov rings. Similarly, we need a weak monotonicity condition for the count (\ref{cobordismcount}) to be finite. 

\begin{definition}\label{def: bundle monotone}
Let $\pi: (E,\omega)\to B$ be a symplectic fiber bundle, and $\Ver:=\Ker (d\pi)$ be the vertical distribution. We say the symplectic bundle is weakly monotone if $[\omega]$ vanishes on the kernel of 
\begin{equation}
    c_1(\Ver): H_2(E)\longrightarrow \R.
\end{equation}
\end{definition}

Similarly, we have the following lemma, which tells us that the count (\ref{cobordismcount}) is finite\footnote{The reason is that the weak monotonicity condition ensures that pseudo-holomorphic sections with the same asymptotes and Fredholm index have the same vertical energy $\int_C \omega_X$, and hence we have an a priori bound on the total energy on such sections.}, and hence the product and coproduct structures induced by $X_{m,n}$ and $X^{m,n}$ are well-defined without use of Novikov rings. 

\begin{lemma}\label{cobordism monotone}
If $\partial\Sigma\ne\emptyset$ or $\Sigma$ is closed with genus at least 2, then both $X_{m,n}$ and $X^{m,n}$ are weakly monotone.
\end{lemma}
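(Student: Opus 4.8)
The plan is to reduce weak monotonicity to a statement purely about the classes $[\omega_X]$ and $c_1(\Ver)$ in $H^2(X;\R)$, following closely the argument for mapping tori (Lemma \ref{symp monotone}, i.e.\ \cite{hutchings2005periodic}, Lemma 5.1); the only new feature is that the base is the thrice-punctured sphere $B_0$ rather than a circle. I write $X$ for $X_{m,n}$ throughout --- the case of $X^{m,n}$ is identical, differing only in the auxiliary map ($g^{m,n}$ in place of $g_{m,n}$) and in which punctures are labelled positive. First I would note that $[\omega_X]=[\omega_{X,0}]$ in $H^2(X;\R)$: the Hamiltonian perturbation changes $\omega_{X,0}$ by $d(\lambda(x)h(y)\beta)$, whose primitive $\lambda(x)h(y)\beta$ is a globally defined $1$-form since $\lambda$ is supported away from the locus where $X_D$ and $X_H$ are glued with a shift. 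So it suffices to work with $\omega_{X,0}$, and the goal becomes: $[\omega_{X,0}]$ vanishes on $\Ker c_1(\Ver)\subset H_2(X;\R)$.

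Next I would compute $H_2(X;\R)$. Since $\pi_X$ is a fibration and $B_0$ deformation retracts onto a wedge $\Gamma=\sigma_1\vee\sigma_2$ of two circles, $X$ deformation retracts onto $X|_\Gamma$; because $H_{\ge2}(\Gamma)=0$ the Serre spectral sequence of $X|_\Gamma\to\Gamma$, with coefficients twisted by the monodromy representation (built from $\phi^m$ and $\phi^n$, which act trivially on $H_2(\Sigma)$ since they are orientation preserving), degenerates along the total degree $2$ line and gives
\begin{equation}
0\longrightarrow H_0(\Gamma;H_2(\Sigma))\longrightarrow H_2(X;\R)\longrightarrow H_1(\Gamma;H_1(\Sigma))\longrightarrow 0 .
\end{equation}
When $\Sigma$ is closed the left-hand term is $\R\langle[\Sigma]\rangle$, the class of a fibre; when $\partial\Sigma\ne\emptyset$ it vanishes and no fibre is a cycle. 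The quotient $H_1(\Gamma;H_1(\Sigma))$ is the ``horizontal'' part; over $\R$ one splits it off as a subspace $W\subset H_2(X;\R)$ whose classes are represented by closed surfaces $C$ transverse to the fibration, each meeting a generic fibre in a $1$-cycle, together with (when $\Sigma$ is closed and that $1$-cycle is not preserved on the nose by the monodromy --- this happens only for classes involving a curve dual to $\gamma$) a capping $2$-chain contained in a single fibre.

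The heart of the proof is the evaluation of the two classes on this description. On a fibre, $\Ver|_\Sigma=T\Sigma$ and $\omega_X|_\Sigma=\omega_0$, so $\langle c_1(\Ver),[\Sigma]\rangle=\chi(\Sigma)=2-2g$ and $\langle[\omega_X],[\Sigma]\rangle=\int_\Sigma\omega_0>0$. On a horizontal class $[C]\in W$: using the normal form $\omega_X=dx\wedge dy+d(\tfrac12 x^2\beta)$ on $X_D$ and $\omega_0+d(H_0\beta)$ on $X_H$, the purely vertical part restricts to zero on $C$ (which is one-dimensional in the fibre direction) and the remaining part $d\mathcal{H}\wedge\beta$ (with $\mathcal{H}$ fibrewise) integrates to $0$ over the swept part, so $\langle[\omega_X],[C]\rangle$ equals the $\omega_0$-area of the capping chain. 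For $c_1(\Ver)$ one uses that $\Ver$ restricted to the swept part of $C$ is \emph{trivial}: it is obtained by clutching $T\Sigma$ along the fibre $1$-cycle by the differential of the relevant monodromy, which for a Dehn twist is, up to isotopy and the $C^1$-small perturbation, either the identity (outside the twist region and near $\partial\Sigma$) or a power of the constant shear $(x,y)\mapsto(x,y-x)$ --- in all cases a nullhomotopic loop in $GL^+(2,\R)$. Trivializing $\Ver$ there by a flat connection, $\langle c_1(\Ver),[C]\rangle$ also equals an integral over the capping chain, and representing $e(T\Sigma)$ on each fibre by $\tfrac{\chi(\Sigma)}{\int_\Sigma\omega_0}\,\omega_0$ (a constant-curvature fibrewise metric compatible with $\omega_0$) shows $\langle c_1(\Ver),[C]\rangle=\tfrac{\chi(\Sigma)}{\int_\Sigma\omega_0}\langle[\omega_X],[C]\rangle$. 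Combined with the fibre computation this yields the single identity $\chi(\Sigma)\,[\omega_X]=\bigl(\textstyle\int_\Sigma\omega_0\bigr)\,c_1(\Ver)$ in $H^2(X;\R)$ --- exactly as in \cite{hutchings2005periodic}, Lemma 5.1 --- and, when $\chi(\Sigma)=0$, the additional observation that then every horizontal class is ``clean'' (no curve dual to $\gamma$ exists), so $[\omega_X]$ vanishes on all of $W$.

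To conclude: if $\Sigma$ is closed of genus $g\ge2$ then $\chi(\Sigma)=2-2g\ne0$, so the identity shows $[\omega_X]$ is a scalar multiple of $c_1(\Ver)$ and hence vanishes on $\Ker c_1(\Ver)$. If $\partial\Sigma\ne\emptyset$ and $\chi(\Sigma)\ne0$ the same conclusion holds; and if $\partial\Sigma\ne\emptyset$ with $\chi(\Sigma)=0$ (i.e.\ $\Sigma$ an annulus) then $c_1(\Ver)=0$ while $[\omega_X]=0$ on $H_2(X;\R)=W$, so again $[\omega_X]$ vanishes on $\Ker c_1(\Ver)$. This proves weak monotonicity of $X_{m,n}$ and of $X^{m,n}$. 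The main obstacle is the identity $\chi(\Sigma)[\omega_X]=(\int_\Sigma\omega_0)c_1(\Ver)$: establishing it requires simultaneously controlling the twisted coefficient system $H_1(\Gamma;H_1(\Sigma))$ and choosing the surface representatives (and capping chains) coherently, and it is precisely here that one uses that $\phi$ is a Dehn twist --- so that linearized monodromies are nullhomotopic in $GL^+(2,\R)$ --- rather than a general symplectomorphism, and that either $\chi(\Sigma)\ne0$ or there is no fibre class and no dual curve.
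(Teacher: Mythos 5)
Your approach is a genuinely different one: rather than showing directly that $\Ker c_1(\Ver)\subset\Ker[\omega_X]$, you aim for the stronger cohomological identity $\chi(\Sigma)\,[\omega_X]=\bigl(\int_\Sigma\omega_0\bigr)\,c_1(\Ver)$ in $H^2(X;\R)$, computing $H_2(X;\R)$ first via the Serre spectral sequence over the graph $\Gamma$. The identity itself is in fact true (one can check both classes are proportional to the wrapping number $\eta$, via a section-counting argument for $c_1(\Ver)$ and a cut-and-Stokes/local-degree argument for $[\omega_X]$), and it does immediately imply the lemma. The paper, by contrast, never computes $H_2(X)$: it takes an arbitrary $[C]\in\Ker c_1(\Ver)$, uses $\langle c_1(\Ver),[C]\rangle=(2-2g)\eta(C)$ (equation (\ref{eta-chern relation}), proved via a section of $\Ver$ with zeros concentrated near one point, Lemma \ref{relative eta-chern}) together with $g\ge2$ (or $\partial\Sigma\ne\emptyset$) to conclude $\eta(C)=0$, and then observes by cutting $C$ along $F_0$ and $F_1$ and applying Stokes to $x\,dy+\tfrac12x^2\beta$ and $H_0\beta$ that $\int_C\omega_X$ is a linear combination of $p=\eta(C)$ and $mq_1+nq_2$, both of which vanish. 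This is shorter and needs only the weaker containment of kernels, not the full proportionality.

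However, two key steps in your version are not fully justified, and would need the paper's hands-on ingredients to close. \emph{First}, the claim that the ``purely vertical part restricts to zero on $C$'' is only correct after a careful choice of representative for the horizontal class $[C]$: one must arrange the swept piece to be literally a product $\sigma\times\alpha_0$ with the one-cycle $\alpha_0$ frozen and all the monodromy $\phi^m(\alpha_0)\ne\alpha_0$ absorbed into the capping 2-chain at the basepoint. With a generic representative the one-cycle moves as you traverse $\sigma$, the swept surface is two-dimensional in the fibre direction, and $\omega_0$ does not restrict to zero there. \emph{Second}, and more substantially, the step where you equate $\langle c_1(\Ver),[C]\rangle$ with $\tfrac{\chi(\Sigma)}{\int_\Sigma\omega_0}$ times the $\omega_0$-area of the cap by ``representing $e(T\Sigma)$ on each fibre by a constant-curvature metric compatible with $\omega_0$'' is not available as stated: a Dehn twist is not an isometry of any constant-curvature metric on $\Sigma$, so the putative curvature representative does not glue across the monodromy identifications in the mapping-torus ends. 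What is actually needed is a relative/section argument --- choose a section of $\Ver$ that is nonvanishing over the swept part (possible because the clutching by $d\phi^m$, a constant shear, is nullhomotopic in $GL^+(2,\R)$, as you note) and has its zeros concentrated in the cap; counting those zeros gives $\chi(\Sigma)\cdot\eta(C)$. This is precisely the device the paper deploys in Lemma \ref{relative eta-chern}. You should also address the separating case ($\gamma$ separating forces $\partial\Sigma\ne\emptyset$ here), where there are two wrapping numbers $\eta_1,\eta_2$ and the proportionality constants differ per component; the conclusion still holds because both wrapping numbers vanish automatically near $\partial\Sigma$, but this should be said.
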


\begin{proof}
Take a closed surface $C\subset X$ such that $[C]$ lies in the kernel of $c_1(\Ver)$. Using the same notation as in Lemma \ref{local energy lemma}, let $(p,q_1,q_2)$ denote the homology class of $[C\cap F_0]=[C\cap F_1]\in H_1(X_D)$ (isotope $C$ slightly to make the two intersections transverse).  

Let us start with the situation where $\gamma$ is non-separating and $\Sigma$ is closed. It is not difficult to see that:
\begin{equation}\label{eta-chern relation}
    \langle [C], c_1(\Ver) \rangle= (2-2g(\Sigma))\eta(C)
\end{equation}
where $\eta$ is the wrapping number. We conclude that the wrapping number of $C$ is zero. As explained in the proof of Lemma \ref{no crossing lemma}, we have
\begin{equation}
    \eta(C)=p=p+mq_1+nq_2=0.
\end{equation}

Now if $\gamma$ is separating and $\Sigma$ is closed, let $\Sigma_1$, $\Sigma_2$ denote the two components of $\Sigma-[0,1]_x\times S^1_y$. By our assumption: $g(\Sigma_1),g(\Sigma_2)\ge 1$. Similar to the above, we have:
\begin{equation}
    \langle [C], c_1(\Ver) \rangle= (1-2g(\Sigma_1))\eta_1(C)+(1-2g_2(\Sigma_2))\eta_2(C).
\end{equation}
So $[C]\in \Ker(c_1(\Ver))$ implies that both wrapping numbers of $C$ vanish, hence we have $p=p+mq_1+nq_2=0$ again. 

It is not difficult to calculate, using the explicit expression of $\omega_{X}$, that
$\int_C \omega_X$
is a linear combination of $p$ and $mq_1+nq_2$. So $[\omega_X]$ indeed vanishes  on $[C]$.

Finally, if $\partial\Sigma\ne \emptyset$, then the wrapping number of $C$ is automatically zero if $\gamma$ is non-separating. If $\gamma$ is separating, then:
\begin{enumerate}
    \item If both components of $\Sigma-[0,1]_x\times S^1_y$ contains at least one component of $\partial C$, then both wrapping numbers of $C$ automatically vanishes;
    \item If only one of the components of $\Sigma-[0,1]_x\times S^1_y$, say $\Sigma_2$, contains components of $\partial \Sigma$ (so $\eta_2$ vanishes), then we have
    \begin{equation}
        \langle [C], c_1(\Ver)\rangle =(1-2g(\Sigma_1))\eta_1(C).
    \end{equation}
    So $[C]\in ker(c_1(\Ver)$ implies that $\eta_1$ vanishes as well.
\end{enumerate}
The conclusion is that $p=p+mq_1+nq_2=0$ regardless. Using the exact same argument as above, we conclude that $[\omega_X]$ vanishes on $[C]$ as well.
\end{proof}

Lemma \ref{symp monotone} tells us that for any positive integer $m$, $HF_*(\phi^m)$ is well defined without use of Novikov coefficients. In fact, it is well-known (see for example \cite{seidel1996symplectic,hutchings2005periodic}) that:
\begin{equation}\label{HF isomorphism}
HF_*(\phi^m)\cong H_*(\Sigma_0;\Z_2)\oplus (\oplus_{i=1}^{m-1} H_*(S^1))
\end{equation}
where the $i$-th component of $\oplus_{i=1}^{m-1} H_*(S^1)$ comes from the Reeb orbits inside the Dehn twist region over $x=\frac{i}{m}$.

\subsection{All sections have vanishing wrapping numbers}
In this section we explain that in our setting, all $J$-holomorphic sections of $X_{m,n}\to B_0$ with Fredholm index $0$ have vanishing wrapping numbers. This observation will allow us to use the no crossing results from Section \ref{basic case} and \ref{general no crossing}.

\begin{theorem}\label{eta=0 lemma}
Let $J$ be an almost complex structure on $X_{m,n}$ that is close to a fibration-compatible one. Suppose
\begin{itemize}
    \item If $\gamma$ is non-separating, then $\partial \Sigma \ne \emptyset$ or $\Sigma$ is closed with genus at least $2$;
    \item If $\gamma$ is separating, then each component of $\Sigma-\gamma$ either contains a component of $\partial\Sigma$ or has genus at least 2.
\end{itemize} 
Then for any Fredholm index zero $J$-holomorphic section $C$ with cylindrical ends, $C$ has vanishing wrapping numbers. 
\end{theorem}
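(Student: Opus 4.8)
The plan is to express the wrapping number of a Fredholm index zero section in terms of the Conley--Zehnder indices of its three asymptotic Reeb orbits and the relative first Chern class, and then to show that the index-zero constraint forces the wrapping number to vanish. First I would recall the index formula for a section $C$ of $X_{m,n}$,
\begin{equation}
\ind(C) = 1 + 2\langle c_1^\tau(TX_{m,n}),[C]\rangle + \CZ_\tau(\gamma_x) + \CZ_\tau(\gamma_y) - \CZ_\tau(\gamma_z),
\end{equation}
and compare it with the analogous index formula that governs the isomorphism (\ref{HF isomorphism}): on each of the cylindrical ends the Conley--Zehnder indices of the elliptic and hyperbolic orbits $e^m_i,h^m_i$ (and their analogues for $n$ and $m+n$) are pinned down by the local model of the twist, and the generators of $H_*(\Sigma_0;\Z_2)$ sit at CZ index governed by the Morse index of $H_0$. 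The key point is that $\langle c_1^\tau(TX_{m,n}),[C]\rangle$ is a sum of two contributions: a ``horizontal'' piece coming from $c_1(TB_0)$, which is a fixed number depending only on the asymptotic orbits (through the chosen trivialization $\tau$), and a ``vertical'' piece $\langle c_1(\Ver),[C]\rangle$, which by the computation in the proof of Lemma \ref{cobordism monotone} — specifically equation (\ref{eta-chern relation}) and its separating analogue — is an explicit (nonzero, under the stated hypotheses) integer multiple of the wrapping number(s) $\eta(C)$.

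Next I would assemble these pieces. Writing $\langle c_1(\Ver),[C]\rangle$ as a negative-coefficient linear combination of the wrapping numbers (the coefficient being $2-2g(\Sigma)$ in the nonseparating closed case, $1-2g(\Sigma_i)$ on the separating pieces, and all of this is $\le -1$ under the hypotheses of the theorem — note we need $\Sigma$ closed of genus $\ge 2$ precisely so this coefficient is strictly negative), the index formula becomes
\begin{equation}
0 = \ind(C) = A + 2\langle c_1(\Ver),[C]\rangle,
\end{equation}
where $A$ is a constant determined entirely by the three asymptotic orbits. Since the wrapping numbers are non-negative by Remark \ref{remark_nonnegative_wrapping_number}, and their coefficient in $\langle c_1(\Ver),[C]\rangle$ is strictly negative, any section with positive wrapping number would have $\langle c_1(\Ver),[C]\rangle \le -1$, hence $\ind(C) = A + 2\langle c_1(\Ver),[C]\rangle \le A - 2 < A$. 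So it remains to check that $A$ itself equals the Fredholm index of the ``wrapping number zero'' representative in each homology class, i.e. $A$ is even and equals $0$ (or more precisely, that $\ind = 0$ already forces $\langle c_1(\Ver),[C]\rangle = 0$); this follows by computing $A$ directly from the explicit CZ indices of the elliptic/hyperbolic orbits and the Morse-theoretic generators, using the standard trivialization $\tau$ described in Section \ref{the product}, and observing that changing $[C]$ within a fixed set of asymptotes changes $\ind$ by $2\langle c_1(\Ver),[C']-[C]\rangle$, a nonzero even multiple of the wrapping-number difference.

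The main obstacle I expect is the bookkeeping of trivializations and the precise value of the constant $A$: one has to choose $\tau$ consistently across the three ends so that the ``horizontal'' part of $c_1^\tau(TX_{m,n})$ and the CZ contributions combine correctly, and verify that the index really does detect the wrapping number with the claimed sign. This is essentially the same computation that underlies the index formula in \cite{hutchings2005periodic}, so I would follow that reference closely and cite Lemma \ref{cobordism monotone} for the identity relating $c_1(\Ver)$ to the wrapping numbers; the argument for the separating case is identical once one runs it on each component of $\Sigma - \gamma$ separately, using that the stated hypotheses guarantee the relevant Euler-characteristic coefficient is strictly negative on every component.
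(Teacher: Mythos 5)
Your overall route matches the paper's: start from the Fredholm index formula, split $c_1^\tau(TX_{m,n})$ into a horizontal piece (a constant coming from $\chi(B_0)$) and the vertical piece $\langle c_1^\tau(\Ver),[C]\rangle$, invoke the identity $\langle c_1^\tau(\Ver),[C]\rangle=(2-2g)\eta(C)$ (or the per-component analogue in the separating/boundary cases), and combine with $\eta\ge0$ and $g\ge 2$ to force $\eta=0$. That is exactly the paper's strategy.

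There is, however, a concrete gap in the last step. You write the index as $\ind(C)=A+2\langle c_1^\tau(\Ver),[C]\rangle$ and then try to pin down $A$ exactly, asserting that ``$A$ is even and equals $0$.'' That claim is neither true in general nor what is needed: $A=-1+\sum_i \CZ_\tau(\alpha_i)-\CZ_\tau(\beta)$ depends on which three orbits appear and ranges over several values (it need not be even, let alone $0$). The paper sidesteps this by using only an upper bound. With the trivialization $\tau$ chosen as in Section~\ref{the product}, every relevant Reeb orbit has $\CZ_\tau\in\{-1,0,1\}$ (the list covers local minima/saddle/maxima of $H_0$ and elliptic/hyperbolic orbits in the twist region), so $A\le -1+1+1-(-1)=2$. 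Then, if $\eta(C)\ge1$ and $g\ge2$,
\begin{equation}
0=\ind(C)=A+2(2-2g)\eta(C)\le 2+(4-4g)\eta(C)\le 2-4<0,
\end{equation}
a contradiction. The analogous one-sided bound on $A$ is what runs the separating-case argument on each component. So rather than computing $A$, you should record the $\CZ_\tau\in\{-1,0,1\}$ classification explicitly and use it to bound $A$; this closes the gap and is the only substantive difference from the paper's argument. (Minor: for components of $\Sigma-\gamma$ that meet $\partial\Sigma$, the corresponding wrapping number vanishes automatically by the maximum principle, Lemma~\ref{mp}; your phrase ``the coefficient is strictly negative on every component'' does not quite cover these.)
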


To prove Theorem \ref{eta=0 lemma}, let us recall the Fredholm index formula. Let $C$ be a $J$-holomorphic section in $X_{m,n}$ with positive asymptotes $\alpha_i$ and negative asymptote $\beta$, represented by a map $u: B_0\to X_{m,n}$. Fix a trivialization $\tau$ of the vertical distribution along each Reeb orbit, and denote by $\langle c_1^\tau(TX_{m,n}),[C]\rangle$ the first Chern number of the complex vector bundle $u^*TX_{m,n}$ over $B_0$ with respect to the trivialization $\tau$ and the natural splitting $TX_{m,n}|_{\gamma} \cong \Ver\bigoplus \mathbb{R}\langle R, \partial_s \rangle$ over the ends. Here $\R \langle R, \partial_s\rangle$ denotes the distribution spanned by the Reeb vector field and the symplectization direction. For each asymptotic orbit, let $\CZ_\tau$ be the Conley-Zehnder index with respect to $\tau$. We have the Fredholm index formula:
\begin{equation}\label{Fredholm product}
\ind(C) = 1 + 2 \langle c_1^\tau(TX_{m,n}),[C]\rangle+ \sum \CZ_\tau(\alpha_i) - \CZ_\tau(\beta).
\end{equation}
Notice that in our setting, the map $u$ is a section of the fibration $X_{m,n}\to B_0$, so $u^*TX_{m,n}$ naturally splits as $u^*TX_{m,n}\cong TB_0\bigoplus u^*\Ver$. In light of this splitting, we have:
\begin{equation}
\langle c_1^\tau(TX_{m,n}),[C]\rangle = -1+\langle c_1^\tau(\Ver),[C]\rangle.
\end{equation}
So the index formula can be rewritten as:
\begin{equation}
\ind(C) = -1 + 2 \langle c_1^\tau(\Ver),[C]\rangle+ \sum \CZ_\tau(\alpha_i) - \CZ_\tau(\beta).
\end{equation}
Now recall that the Reeb orbits can be divided into two types: those coming from critical points of $mH_0$, $nH_0$ or $(m+n)H_0$ outside of $N$ and those lying inside the twist region. There is a natural choice of the trivialization $\tau$ of the distribution $\Ver$ over these Reeb orbits: for the critical points of $H$, the distribution $\Ver$ can be identified with the tangent space $T\Sigma$ at the point; and over the Dehn twist region $N$; we can identify $\Ver$ with $TN$. We will always choose $\tau$ as above, and the Conley-Zehnder index $\CZ_\tau$ with respect to such a trivialization is:

\begin{enumerate}
    \item -1, if the orbit comes from a local minimum of $H$, or is an elliptic orbit inside the Dehn twist region;
    \item 0, if the orbit comes from a saddle point of $H$, or is a hyperbolic orbit inside the Dehn twist region;
    \item 1, if the orbit comes from a local maximum of $H$.
\end{enumerate}

Following \cite{hutchings2005periodic}, we now demonstrate a lemma relating the relative first Chern number $\langle c_1^\tau(\Ver),[C]\rangle$ to the wrapping number $\eta(C)$ (similar ideas were applied in the proof of Lemma \ref{cobordism monotone}; we present a proof of the generalization of equation (\ref{eta-chern relation}) here):

\begin{lemma}\label{relative eta-chern}
If $\Sigma$ is a closed surface with genus $g$, the loop $\gamma$ is non-separating, and $C$ is a $J$-holomorphic section, then \begin{equation}
\langle c_1^\tau(\Ver),[C]\rangle = (2-2g)\eta(C).
\end{equation}
\end{lemma}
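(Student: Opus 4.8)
The plan is to compute the relative first Chern number $\langle c_1^\tau(\Ver),[C]\rangle$ by a cut-and-paste argument across the twist region, exactly parallel to the computation of $\int_C\omega_X$ in the proof of Lemma~\ref{cobordism monotone}. First I would isotope $C$ so that it meets the slices $F_0$ and $F_1$ transversely, and use the homological identity (already established in the proof of Lemma~\ref{no crossing lemma}) that with vanishing-type bookkeeping one has $[C\cap F_0]=[C\cap F_1]=(p,q_1,q_2)\in H_1(X_D)$ with $p=\eta(C)$. The key point is that $c_1^\tau(\Ver)$ is additive under decomposing the domain $B_0$ (equivalently, decomposing $C$) along these slices: write $[C]$ as the sum of the piece inside the twist region $X_D$ and the piece inside the complement $X_H$, and compute the contribution of each using the natural trivialization $\tau$ chosen above — namely $\Ver\cong TN$ over $N$ and $\Ver\cong T\Sigma$ over the critical points of $H_0$.

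The second step is to evaluate the two contributions. Over $X_H=B_0\times\Sigma_0$, the vertical bundle is the pullback of $T\Sigma_0$ (trivialized as $TN$ along the gluing circles), so its relative first Chern number over the piece $C\cap(B_0\times\Sigma_0)$ is computed by the obstruction to extending the given boundary trivialization over a surface mapping to $\Sigma_0$; with $\gamma$ non-separating and $\Sigma$ closed of genus $g$, this is $(2-2g)$ times the number of times $C$ wraps around $\Sigma_0$, i.e.\ $(2-2g)\eta(C)$ — this is essentially the Poincaré–Hopf / Euler-class computation for $T\Sigma$ restricted to $\Sigma_0$, where $\chi(\Sigma_0)=\chi(\Sigma)=2-2g$ because removing an open annulus does not change the Euler characteristic. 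Over $X_D=B_0\times N$, the bundle $\Ver\cong TN$ is already trivialized compatibly with $\tau$ (the $S^1_t$-directions and the $x$-direction give an honest trivialization of $TN$), so the contribution of $C\cap X_D$ to $c_1^\tau(\Ver)$ vanishes. Adding the two pieces and checking that the trivializations on the gluing circles $\{x=0\}$ and $\{x=1\}$ match (so there is no correction term from the seam) gives $\langle c_1^\tau(\Ver),[C]\rangle=(2-2g)\eta(C)$.

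The main obstacle I expect is the bookkeeping at the seam: one must verify that the trivialization $\tau$ of $\Ver$ coming from $TN$ on the $X_D$ side and from $T\Sigma$ on the $X_H$ side (pushed to the boundary circles via the chosen coordinates) agree, so that the Chern numbers of the two pieces add with no defect, and one must correctly account for the twist in the gluing map (the $y\mapsto y+g_{m,n}(p)$ identification on the right end), which affects how the $S^1_y$-factor sits in $\Ver$ but not the isomorphism class of the trivialization of $TN$ itself. Once the seam is handled, the genus-$g$ case reduces to the standard fact that a section winding $\eta$ times around a genus-$g$ surface-minus-annulus has relative Euler number $(2-2g)\eta$ with respect to the product trivialization on the boundary. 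I would follow \cite{hutchings2005periodic} Lemma~4.5 (or the analogous statement there) for the precise normalization, since the argument in our setting is identical up to the replacement $\langle[\Sigma],c_1(\Ver)\rangle=2-2g$.
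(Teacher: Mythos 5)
Your proposal is correct in spirit but takes a genuinely different route from the paper. The paper's proof is a single global argument with no decomposition: it chooses a section $\psi$ of $\Ver$ over all of $X_{m,n}$ which is $\tau$-constant near the asymptotic Reeb orbits and whose restriction to each fiber has transverse zeroes at points $p_1,\dots,p_l$ concentrated near a generic $pt\in\Sigma_0$, with total index $\chi(\Sigma)=2-2g$. Then $\langle c_1^\tau(\Ver),[C]\rangle$ is by definition the count of zeroes of $\psi|_C$, which are exactly $C\cap(B_0\times\{p_1,\dots,p_l\})$, giving $(2-2g)\eta(C)$ immediately. Your cut-and-paste argument across $F_0$ and $F_1$ --- contribution $0$ from $X_D$ because $TN$ is trivial, contribution $(2-2g)\eta(C)$ from $X_H$ via an Euler-class count --- arrives at the same answer, but requires you to specify an intermediate trivialization on the seam circles, check that it agrees with both the $TN$-trivialization on the $X_D$ side and the chosen framing on the $X_H$ side, and account for the $y\mapsto y+g_{m,n}(p)$ gluing (harmless, since it is a fiberwise translation preserving $\partial_y$). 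The paper's single-section approach avoids all of this bookkeeping. One small correction to your preliminary step: the identity $[C\cap F_0]=[C\cap F_1]$ in the proof of Lemma \ref{cobordism monotone} is stated for a \emph{closed} surface $C$; for a $J$-holomorphic section with cylindrical ends that asymptote to Reeb orbits inside $N'$, these two classes need not be equal (they differ by the homology classes of punctures over $x\in(0,1)$). What is true, and all you need, is that the $p$-components at both slices equal $\eta(C)$.
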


\begin{proof}
Choose a generic point, which we denote by $pt\in \Sigma'$, such that $C$ intersects $B_0\times \{pt\}$ transversely. Recall that $\eta(C)$ is by definition the algebraic intersection number $\#C\bigcap(B_0\times \{pt\})$.

Choose a section $\psi$ of $\Ver$ over $X_{m,n}$, with the following property: 
\begin{enumerate}
    \item When restricted to the Reeb orbits, $\psi$ is constant with respect to $\tau$;
    \item There are $l$ points $p_1, p_2, \cdots, p_l$ concentrated in an arbitrarily small neighborhood of $pt$, such that on each fiber of $X_{m,n}\to B_0$, the section $\psi$ has transverse zeroes at precisely $p_1, p_2, \cdots, p_l$, with total degree $2-2g$.
\end{enumerate}

We can also arrange $\psi$ so that $C$ intersects each $B_0\times \{p_i\}$ transversely. Now by definition, $\langle c_1^\tau(\Ver),[C]\rangle$ is the algebraic count of zeroes of $u^*\psi$. Observe that the zeroes of $\psi|_C$ occurs at precisely $C\bigcap (B_0\times\{p_1, p_2, \cdots, p_l\})$, and the algebraic count of these zeroes is $(2-2g)\eta(C)$. 
\end{proof}

Now we are ready to prove Theorem \ref{eta=0 lemma}.

\begin{proof}[Proof of Theorem \ref{eta=0 lemma}]
Let us start with the case where $\gamma$ is non-separating. As remarked before, if $\partial\Sigma\ne\emptyset$, then $\eta(C)$ is automatically zero. If $\Sigma$ is closed, by equation (\ref{eta-chern relation}), we have the following:
\begin{equation}
    0=\ind(C)=-1 + 2(2-2g)\eta(C)+ \sum \CZ_\tau(\alpha_i) - \CZ_\tau(\beta).
\end{equation}
But since $\CZ_\tau\in\{-1,0,1\}$, we have
\begin{equation}
\ind (C)\le 2+ (4-4g)\eta(C).
\end{equation}
This, together with the fact that $\eta(C)\ge 0$ and the assumption $g\ge 2$, forces that $\eta(C)=0$.

Now let us deal with the case where $\gamma$ is separating. As before, let us denote by $\Sigma_1$ and $\Sigma_2$ the two components of $\Sigma-N$. If $\Sigma$ is closed, then similar to Lemma \ref{relative eta-chern}, we have:
\begin{equation}
    \langle c_1^\tau(\Ver),[C]\rangle = (1-2g(\Sigma_1))\eta_1(C)+(1-2g(\Sigma_2))\eta_2(C).
\end{equation}
So we have:
\begin{align}
\begin{split}
    0&=\ind(C)\\
    &=-1 + 2(1-2g(\Sigma_1))\eta_1(C)+2(1-2g(\Sigma_2))\eta_2(C)+ \sum \CZ_\tau(\alpha_i) - \CZ_\tau(\beta)\\
    &\le 2 + 2(1-2g(\Sigma_1))\eta_1(C)+2(1-2g(\Sigma_2))\eta_2(C).
\end{split}
\end{align}
By our assumption, $g(\Sigma_1),g(\Sigma_2)\ge2$. Combined with the fact that $\eta_1,\eta_2\ge 0$, we have
\begin{equation}
    \eta_1(C)=\eta_2(C)=0,
\end{equation}
as desired.

The case where both $\Sigma_1$ and $\Sigma_2$ contain a component of $\partial\Sigma$ is easy: we only need to observe as before that if $\Sigma_i$ contains a component of $\partial\Sigma$, then $\eta_i$ is automatically zero. The only remaining situation is the following: only one of the two components of $\Sigma-N$ contains a component of $\partial\Sigma$, and the other one has genus at least 2. WLOG let us assume $\Sigma_2$ is the one containing $\partial\Sigma$ (so $\eta_2$ vanishes automatically). Similar to what we saw in Lemma \ref{relative eta-chern}, we have the following:
\begin{equation}
    \langle c_1^\tau(\Ver),[C]\rangle = (1-2g(\Sigma_1))\eta_1(C)
\end{equation}
which implies that
\begin{align}
\begin{split}
    0&=\ind(C)\\
    &=-1 + 2(1-2g(\Sigma_1))\eta_1(C)+ \sum \CZ_\tau(\alpha_i) - \CZ_\tau(\beta)\\
    &\le 2 + 2(1-2g(\Sigma_1))\eta_1(C).
\end{split}
\end{align}
Again, since $g(\Sigma_1)\ge2$ and $\eta_1\ge 0$, we conclude that $\eta_1(C)$ has to vanish as well.

\end{proof}

\subsection{Computation of the product (proof of Theorem \ref{product for multiple twists})}
We are now ready to prove Theorem \ref{product for multiple twists}.
\begin{proof}[Proof of Theorem \ref{product for multiple twists}]
We choose a generic tame almost complex structure $J$ on $X_{m,n}$ such that all moduli spaces of Fredholm index zero sections are cut out transversely. We further assume that $J$ is $C^\infty$ close to a fibration-compatible almost complex structure so that Theorem \ref{classification for general J} applies. Theorem \ref{classification for general J} and Theorem \ref{eta=0 lemma} tell us that all the $J$-holomorphic sections are either contained in $X_H$ or $X_D$.

Now the count of $J$-holomorphic sections contained in $X_H$ precisely corresponds to the intersection product of $H_*(\Sigma_0;\Z_2)\subset HF_*(\phi^m)$ and $H_*(\Sigma_0;\Z_2)\subset HF_*(\phi^n)$ in the sense of the decomposition (\ref{HF isomorphism}). By results \cite{PSS,frauenfelder2007hamiltonian,lanzat2016hamiltonian}, the cobordism map of the pair-of-pants product in this case can be identified with the intersection pairing (notice that in our case $\pi_2(\Sigma_0)=0$, so no Novikov rings are needed here):
\begin{equation}
    H_*(\Sigma_0;\Z_2)\otimes H_*(\Sigma_0;\Z_2)\xrightarrow{\quad\cap\quad}H_*(\Sigma_0;\Z_2).
\end{equation}

To finish the proof, we only need to show that the count of sections contained in the twist region contributes to zero in the cobordism map. By Lemma \ref{more in twist}, any such $J$-holomorphic section must be contained in the $\delta$-neighborhood of some slice $F_x$, where $x=\frac{i}{d}$ for some $i\in \{1, 2\cdots, d-1\}$. Here $d:=\gcd(m,n)$. Recall that, near $x=\frac{i}{d}$, the symplectic fiber bundle is given by the trivial product $X_i:=B_0\times (\frac{i}{d}-\delta, \frac{i}{d}+\delta)_x\times S^1_y$ with the fiberwise symplectic closed 2-form $\omega_X$, which is a small Hamiltonian perturbation of $\omega_{X,0}=dx\wedge dy+d(\frac{1}{2}x^2\beta_{m,n})$. Now we use a change-of-coordinate trick to show that the above symplectic fiber bundle is equivalent to another one, which calculates the pair-of-pants product of the Hamiltonian Floer homology of \emph{small} Hamiltonians on $(\frac{i}{d}-\delta, \frac{i}{d}+\delta)_x\times S^1_y$. To do this, let $X_0$ be the trivial bundle $B_0\times (-\delta, \delta)_{x'}\times S^1_{y'}$ together with the fiberwise symplectic form $dx'\wedge dy'+d(\frac{1}{2}x'^2\beta_{m,n})$. Define a diffeomorphism $\mu: X_i\to X_0$ by:
\begin{equation}\label{eq: translation trick}
\begin{cases}
      x'=x-\frac{i}{d} \\
      y'=y+ i\cdot g'_{m,n}(p) 
    \end{cases}
\end{equation}
where $p$ denotes the coordinate on $B_0$. It's easy to see that $\mu$ preserves the fibers, and that $\mu$ pulls $dx'\wedge dy'+d(\frac{1}{2}x'^2\beta_{m,n})$ back to $\omega_{X,0}$, because (recall that $dg'_{m,n}=\frac{\beta_{m,n}}{d}$):
\begin{align}
\begin{split}
\mu^*(dx'\wedge dy'+d(\frac{1}{2}x'^2\beta_{m,n})) &= dx\wedge(dy+i\cdot g'_{m,n})+d(\frac{1}{2}(x-\frac{i}{d})^2\beta_{m,n})\\
 &= dx\wedge dy+\frac{i}{d} dx\wedge\beta_{m,n}+(x-\frac{i}{d})dx\wedge\beta_{m,n}\\&= dx\wedge dy+d(\frac{1}{2}x^2\beta_{m,n})\\&= \omega_{X,0}.
\end{split}
\end{align}

We can also push forward the chosen Hamiltonian perturbation to perturb $dx'\wedge dy'+d(\frac{1}{2}x'^2\beta_{m,n})$ on $X_0$ (we denote the perturbed fiberwise symplectic 2-form by $\omega'$). For any almost complex structure $J$ on $X_i$, we have the following one-to-one correspondence:
\begin{center}
    \{$J$-holomorphic sections of $X_i$\}$\xleftrightarrow{1:1}$\{$\mu_*(J)$-holomorphic sections of $X_0$\}.
\end{center}

Now, similar to the $J$-holomorphic sections that are contained in $X_H$, it is clear that sections contained in $X_0$ computes the pair-of-pants product of (a small perturbation of) the fixed points of time-1 maps of $m\cdot \frac{1}{2}x'^2$ and $n\cdot \frac{1}{2}x'^2$. This corresponds to the intersection product of $H_*((-\delta, \delta)_{x'}\times S^1_{y'})$, which is identically zero. This concludes the proof of Theorem \ref{product for multiple twists}.
\end{proof}

\section{The coproduct}\label{the coproduct}
In this section, we generalize the methods used in Section \ref{the product} further to compute the pair-of-pants coproduct of fixed point Floer homology of Dehn twists: $HF_*(\phi^{m+n})\to HF_*(\phi^m)\otimes HF_*(\phi^n)$. Note that Lemma \ref{cobordism monotone} tells us that the cobordism map is well-defined even without the use of Novikov rings. The goal of this section is to prove Theorem \ref{coproduct for multiple twists}. To begin with, similar to Theorem \ref{eta=0 lemma}, we have the following:

\begin{theorem} \label{cobordism eta=0}
Let $J$ be an almost complex structure on $X^{m,n}$ that is close to a fibration-compatible one. Suppose
\begin{itemize}
    \item If $\gamma$ is non-separating, then $\partial \Sigma \ne \emptyset$ or $\Sigma$ is closed with genus at least $2$;
    \item If $\gamma$ is separating, then each component of $\Sigma-\gamma$ either contains a component of $\partial\Sigma$ or has genus at least 2.
\end{itemize}
Then for any index zero $J$-holomorphic section $C$ with cylindrical ends, $C$ has vanishing wrapping numbers.
\end{theorem}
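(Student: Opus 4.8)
The plan is to run the argument of Theorem \ref{eta=0 lemma} essentially verbatim, keeping track of the one structural difference: a section $C$ of $X^{m,n}$ now has one positive asymptotic orbit $\alpha$ and two negative asymptotic orbits $\beta_1,\beta_2$ rather than two positive and one negative. First I would record the Fredholm index formula for sections of $X^{m,n}$,
\[
\ind(C) = 1 + 2\langle c_1^\tau(TX^{m,n}),[C]\rangle + \CZ_\tau(\alpha) - \CZ_\tau(\beta_1) - \CZ_\tau(\beta_2),
\]
and, using the same splitting $u^*TX^{m,n}\cong TB_0\oplus u^*\Ver$ as in Section \ref{the product} (which contributes $-1$ from $c_1(TB_0)$), rewrite it as
\[
\ind(C) = -1 + 2\langle c_1^\tau(\Ver),[C]\rangle + \CZ_\tau(\alpha) - \CZ_\tau(\beta_1) - \CZ_\tau(\beta_2).
\]
The trivialization $\tau$ is chosen exactly as in Section \ref{the product} (identify $\Ver$ with $T\Sigma$ over critical points of $H$, and with $TN$ over the twist region), so the Conley--Zehnder index of every relevant Reeb orbit still lies in $\{-1,0,1\}$.

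Next I would transfer the relation between $\langle c_1^\tau(\Ver),[C]\rangle$ and the wrapping number(s) to the coproduct bundle. The proof of Lemma \ref{relative eta-chern} uses only the fiber bundle structure of $X\to B_0$ together with the choice of $\tau$, so it applies word for word to $X^{m,n}$: if $\Sigma$ is closed and $\gamma$ is non-separating then $\langle c_1^\tau(\Ver),[C]\rangle = (2-2g)\eta(C)$; if $\gamma$ is separating then $\langle c_1^\tau(\Ver),[C]\rangle = (1-2g(\Sigma_1))\eta_1(C) + (1-2g(\Sigma_2))\eta_2(C)$, exactly as in the proof of Lemma \ref{cobordism monotone}. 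If $\partial\Sigma\ne\emptyset$, the wrapping number over any boundary-containing component vanishes automatically (as in the discussion around Lemma \ref{mp}), so only the wrapping numbers of the remaining higher-genus components survive.

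Combining these two inputs, in each case of the hypothesis the index identity $\ind(C)=0$ yields an inequality of the form
\[
0 \le 2 + (4-4g)\eta(C)
\]
when $\Sigma$ is closed and $\gamma$ is non-separating, and the analogous inequalities $0 \le 2 + 2(1-2g(\Sigma_1))\eta_1(C) + 2(1-2g(\Sigma_2))\eta_2(C)$ (or with one of the $\eta_i$ already zero) in the separating and boundary cases, since $\CZ_\tau(\alpha)-\CZ_\tau(\beta_1)-\CZ_\tau(\beta_2)\le 1-(-1)-(-1)=3$. Using $g,g(\Sigma_i)\ge 2$ and the non-negativity of wrapping numbers (Remark \ref{remark_nonnegative_wrapping_number}), each term on the right has a strictly negative coefficient multiplying a non-negative integer, which forces all wrapping numbers of $C$ to vanish. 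I do not anticipate a genuine obstacle: the only point requiring care is the bookkeeping that replacing one positive puncture by two negative punctures leaves the upper bound $3$ on the net Conley--Zehnder contribution unchanged, so the estimates from the proof of Theorem \ref{eta=0 lemma} carry over without modification.
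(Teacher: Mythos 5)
Your proof is correct and follows the same route the paper takes; the authors simply note that the argument of Theorem~\ref{eta=0 lemma} goes through ``almost verbatim, modifying the Conley--Zehnder index term,'' and your proposal correctly supplies those details, in particular the observation that passing from two positive/one negative punctures to one positive/two negative punctures leaves the upper bound $\CZ_\tau(\alpha)-\CZ_\tau(\beta_1)-\CZ_\tau(\beta_2)\le 3$ unchanged, so the inequality $0\le 2+(4-4g)\eta(C)$ (and its separating and boundary analogues) is the same as before.
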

And the proof is almost the same as that of Theorem \ref{eta=0 lemma} (we only need to slightly modify the Conley-Zehnder index term). Combined with Theorem \ref{classification for general J}, it tells us the following:

\begin{cor}\label{coproduct sections classification}
Suppose the almost complex structure $J$, and the loop $\gamma$ satisfy the same condition as in Theorem \ref{cobordism eta=0}, then all $J$-holomorphic sections of $X^{m,n}\to B_0$ that have Fredholm index zero must be contained in $X_H$ or $X_D$. 
\end{cor}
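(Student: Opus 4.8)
The plan is to combine the two results already established in this section, Theorem \ref{cobordism eta=0} and Theorem \ref{classification for general J}; the only point needing care is the quantifier hidden in the phrase ``$J$ is close to a fibration-compatible one''. As elsewhere in the paper, I read this to mean that $J$ lies in a \emph{sufficiently small} $C^\infty$-neighborhood $\mathcal{U}$ of some fixed fibration-compatible almost complex structure $J_0$ on $X^{m,n}$, with $\mathcal{U}$ chosen (as in Section \ref{the product}) so that every $J\in\mathcal{U}$ is fibration-compatible near each boundary component of $\Sigma$ and sends $\partial_{x_i}$ to $\partial_{y_i}$ there.

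First I would fix a Fredholm index zero $J$-holomorphic section $C$ of $X^{m,n}\to B_0$ with cylindrical ends. Such a section automatically has finite energy: by the weak monotonicity of $X^{m,n}$ (Lemma \ref{cobordism monotone}), sections with fixed asymptotics and fixed Fredholm index have a fixed vertical energy $\int_C\omega_X$, which together with the prescribed asymptotics gives an a priori bound on the total energy. Hence Theorem \ref{cobordism eta=0} applies and shows that all wrapping numbers of $C$ vanish.

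Second I would feed this into Theorem \ref{classification for general J}. Since that theorem is phrased in terms of a sequence $J_k\to J_0$, I would apply it by contradiction: if the corollary failed, then for arbitrarily small neighborhoods of $J_0$ there would be a tame $J$ in that neighborhood carrying a Fredholm index zero section contained neither in $X_H$ nor in $X_D$. Taking a shrinking sequence of such neighborhoods yields $J_k\to J_0$ in $C^\infty$ together with finite-energy $J_k$-holomorphic index zero sections $C_k$, none contained in $X_H$ or $X_D$. For $k$ large, $J_k\in\mathcal{U}$, so by the first step every $C_k$ has vanishing wrapping numbers; Theorem \ref{classification for general J} then forces $C_k\subset X_H$ or $C_k\subset X_D$ for all large $k$, a contradiction. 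Thus the corollary holds for every $J$ in a sufficiently small $C^\infty$-neighborhood of $J_0$.

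I do not anticipate a real obstacle here, since both ingredients are already in hand and the argument is essentially the chaining ``index zero $\Rightarrow$ wrapping numbers vanish $\Rightarrow$ no crossing''. The only mild subtlety is to ensure a single neighborhood of $J_0$ works for both Theorem \ref{cobordism eta=0} and the contradiction form of Theorem \ref{classification for general J}, which is immediate because each hypothesis only asks $J$ to be sufficiently $C^\infty$-close to $J_0$.
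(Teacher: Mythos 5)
Your proposal is correct and follows essentially the same route as the paper: the corollary is presented there as an immediate consequence of Theorem \ref{cobordism eta=0} (index zero implies vanishing wrapping numbers) combined with Theorem \ref{classification for general J} (vanishing wrapping numbers implies the section is confined to $X_H$ or $X_D$). Your additional care in unwinding the sequential phrasing of Theorem \ref{classification for general J} via a shrinking-neighborhood contradiction argument is a reasonable way to make the quantifiers explicit, but it is not a different method.
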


What is different from Section \ref{the product} is that the count of sections contained in the twist region does not contribute to zero in the cobordism map. In the following two subsections, we first give a detailed understanding of the moduli space of all $J$-holomorphic sections in the Morse-Bott (unperturbed) setting, then explain what would happen if we perturb the form $\omega_{X,0}$ to break the Morse-Bott degeneracy.

\subsection{\texorpdfstring{$J$}{J}-holomorphic sections inside the twist region}\label{coproduct curves}
In this subsection, we analyze possible $J$-holomorphic sections inside $X_D$. Unless otherwise specified, the almost complex structure $J$ is assumed to be fibration-compatible, and when restricted to $\Ver$ sends $\partial_x$ to $\partial_y$ (so $J$ is completely determined by the fiberwise symplectic 2-form $\omega$). We start with the fiberwise symplectic 2-form $\omega_{X,0}=dx\wedge dy+d(\frac{1}{2}x^2\beta_{m,n})$ and view $(X_D,\omega_{X,0})$ as part of $\bar{X}_D:=B_0\times \R_x\times S^1_y$ with the same fiberwise symplectic 2-form. Note that by extending the cobordism we are not introducing new curves: Lemma \ref{local energy lemma} ensures that if the $x$-coordinates of the asymptotic Reeb vector fields of a given $J$-holomorphic section are contained in $(-\epsilon, 1+\epsilon)$, then the entire $J$-holomorphic section of $\bar{X}_D$ is in fact entirely contained in $X_D$.

Notice that with this Morse-Bott setting, the Reeb orbits at the ends come in $S^1$-families. The possible $x$-coordinates of such families are:
\begin{itemize}
    \item $x=\frac{k_\infty}{m+n}$ at the positive end,
    \item $x=\frac{k_1}{m}$ at the first negative end, and
    \item $x=\frac{k_2}{n}$ at the second negative end.
\end{itemize}
Observe that if a $J$-holomorphic section has cylindrical ends at $x=\frac{k_\infty}{m+n}, \frac{k_1}{m}$, and $\frac{k_2}{n}$, then $k_\infty=k_1+k_2$.

We make the following two basic observations about $J$-holomorphic sections of $\bar{X}_D$ with cylindrical ends. In what follows, for any fiberwise symplectic 2-form $\omega$ on $\bar{X}_D$ that coincides with $\omega_{X,0}$ outside of some compact subset $K\subset \bar{X}_D$, let $\mathcal{M}_{\omega}(k_\infty;k_1, k_2)$ denote the moduli space of $J$-holomorphic sections of $(\bar{X}_D,\omega)$ whose ends have $x$-coordinates asymptoting to $\frac{k_\infty}{m+n}, \frac{k_1}{m}$, and $\frac{k_2}{n}$, where $k_\infty=k_1+k_2$.

\begin{remark}
We make an observation about $\mathcal{M}_{\omega}(k_\infty;k_1, k_2)$ that is already implicit in its definition. Note here the Reeb orbits come in $S^1$ families, and in defining $\mathcal{M}_{\omega}(k_\infty;k_1, k_2)$ we allow the ends of its elements to land on any Reeb orbit on a given $S^1$ family. In other words, the ends of a $J$-holomorphic section are ``free''. We can also require ends of $J$-holomorphic sections to land on a specific Reeb orbit in a $S^1$ family, in which case the ends are ``fixed''. This distinction will be very important to us when we pass from Morse-Bott case to the Morse case.
\end{remark}

\begin{lemma}\label{shifting lemma}
There is a one-to-one correspondence between $\mathcal{M}_{\omega_{X,0}}(k_\infty;k_1, k_2)$ and $\mathcal{M}_{\omega_{X,0}}(k_\infty+m+n;k_1+m, k_2+n)$.
\end{lemma}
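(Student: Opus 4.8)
The plan is to construct the bijection explicitly using a fiber-preserving diffeomorphism of $\bar{X}_D$ analogous to the translation trick (\ref{eq: translation trick}) used in the proof of Theorem \ref{product for multiple twists}. Recall that $dg_{m,n} = \beta_{m,n}$ near the punctures restricts to $m\,dt_1$, $n\,dt_2$ and $(m+n)\,dt_{-\infty}$; here for the coproduct we use $g^{m,n}$ and $\beta^{m,n}$, with the analogous normalization at the one positive and two negative punctures. The idea is that shifting the $x$-coordinate by $1$ and simultaneously twisting the $y$-coordinate by $g^{m,n}(p)$ should carry $\omega_{X,0}$ to itself while shifting all the asymptotic $x$-levels by exactly $\frac{m+n}{m+n}=1$, $\frac{m}{m}=1$, $\frac{n}{n}=1$ at the respective ends — which is precisely the passage from $(k_\infty; k_1, k_2)$ to $(k_\infty + (m+n); k_1 + m, k_2 + n)$.

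Concretely, first I would define $\nu: \bar{X}_D \to \bar{X}_D$ by $(p, x, y) \mapsto (p, x+1, y + g^{m,n}(p))$, or rather its inverse so that the $x$-levels increase; one must check this is a well-defined fiber-preserving diffeomorphism (the $y$-shift is $S^1$-valued and smooth since $g^{m,n}$ is). Second, I would verify $\nu^*\omega_{X,0} = \omega_{X,0}$ by the same one-line computation as in the product case: writing $x'' = x+1$, $y'' = y + g^{m,n}(p)$, we get $dx''\wedge dy'' + d(\tfrac12 (x'')^2 \beta^{m,n}) = dx\wedge(dy + \beta^{m,n}) + d(\tfrac12(x+1)^2\beta^{m,n}) = dx\wedge dy + dx\wedge\beta^{m,n} + (x+1)dx\wedge\beta^{m,n} + \cdots$, and collecting terms one finds this equals $dx\wedge dy + d(\tfrac12 x^2\beta^{m,n}) = \omega_{X,0}$. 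Since $J$ is the fibration-compatible almost complex structure determined by $\omega_{X,0}$ (sending $\partial_x$ to $\partial_y$ fiberwise), $\nu$ is automatically $(J,J)$-holomorphic, so $u \mapsto \nu\circ u$ sends $J$-holomorphic sections to $J$-holomorphic sections. Third, I would check the effect on asymptotics: a Reeb orbit over $x = \frac{k}{m}$ at the first negative end is carried to one over $x = \frac{k}{m} + 1 = \frac{k+m}{m}$, and similarly for the other two ends, so $\nu_*$ maps $\mathcal{M}_{\omega_{X,0}}(k_\infty; k_1, k_2)$ into $\mathcal{M}_{\omega_{X,0}}(k_\infty + m + n; k_1 + m, k_2 + n)$. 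Finiteness of energy and the cylindrical-end condition are preserved since $\nu$ is a bundle isomorphism that is the identity in the $B_0$-direction and an isometry for the relevant data near infinity. The inverse map $\nu^{-1}_*$ provides the two-sided inverse, giving the bijection.

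The main obstacle — and it is a mild one — is bookkeeping the constraint $k_\infty = k_1 + k_2$ together with the relation between $g^{m,n}$ near the three punctures, making sure the single global $y$-shift by $g^{m,n}(p)$ simultaneously effects the correct integer shift at all three ends; this is exactly where the normalizations $dg^{m,n} = (m+n)dt_\infty$, $m\,dt_1$, $n\,dt_2$ are used, and one should double-check signs and orientations of the punctures. A second point to be careful about is that $\nu$ genuinely extends over the perturbed Morse-Bott families if one later wants the perturbed analogue, but for the present statement we work with the unperturbed $\omega_{X,0}$ throughout, so the computation above suffices and no subtlety arises there. Everything else is formal: the correspondence is manifestly functorial and involutive up to iterating the shift, so no transversality or compactness input is needed.
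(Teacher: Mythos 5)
Your overall strategy is exactly the paper's: define a fiber-preserving diffeomorphism of $\bar{X}_D$ that shifts $x$ by one unit while twisting $y$ by $g^{m,n}(p)$, verify it preserves $\omega_{X,0}$, and conclude it intertwines $J$ with itself and carries one moduli space to the other. The paper uses $\mu(p,x,y) = (p,\,x-1,\,y+g^{m,n}(p))$, which is the inverse direction of yours, but the idea is identical.

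However, your specific map and the accompanying calculation contain a sign error. You take $x'' = x+1$ paired with $y'' = y + g^{m,n}(p)$. Pulling back $\omega_{X,0} = dx''\wedge dy'' + d(\tfrac12 (x'')^2\beta^{m,n})$ then gives
\begin{equation}
dx\wedge(dy + \beta^{m,n}) + (x+1)\,dx\wedge\beta^{m,n} = dx\wedge dy + (x+2)\,dx\wedge\beta^{m,n},
\end{equation}
which is \emph{not} $\omega_{X,0} = dx\wedge dy + x\,dx\wedge\beta^{m,n}$. The correct pairing is to invert the sign of the $y$-twist: $x'' = x+1$ together with $y'' = y - g^{m,n}(p)$ yields
\begin{equation}
dx\wedge(dy - \beta^{m,n}) + (x+1)\,dx\wedge\beta^{m,n} = dx\wedge dy + x\,dx\wedge\beta^{m,n} = \omega_{X,0},
\end{equation}
which is what you need. (Equivalently, the paper's $\mu$ shifts $x$ \emph{down} by $1$ while shifting $y$ \emph{up} by $g^{m,n}$; you shift $x$ \emph{up} by $1$, so you must shift $y$ \emph{down} by $g^{m,n}$.) The hedging remark ``or rather its inverse'' does not rescue the computation as written, since the inverse of your $\nu$ is $(p,x,y)\mapsto(p,\,x-1,\,y-g^{m,n}(p))$, which also fails to preserve $\omega_{X,0}$. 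Once the sign is corrected the rest of your argument — $\nu$ intertwines the fibration-compatible $J$ with itself, the $x$-levels of the asymptotics shift by $1$ at each end giving the stated reindexing, and $\nu$ is bijective with smooth inverse — goes through and matches the paper's proof.
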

\begin{proof}[Proof of Lemma \ref{shifting lemma}]
Define a diffeomorphism $\mu:\bar{X}_D\to \bar{X}_D$, $(p,x,y)\mapsto (p,x',y')$ by
\begin{equation}
\begin{cases}
      x'=x-1 \\
      y'=y+ g^{m,n}(p) .
    \end{cases}
\end{equation}

Observe that $\mu$ preserves $\Ver$, and a simple calculation shows $\mu^*\omega_{X,0}=\omega_{X,0}$. So $\mu^*J=J$, and hence if $u$ is a $J$-holomorphic section contained in $\mathcal{M}_{\omega_{X,0}}(k_\infty+m+n;k_1+m, k_2+n)$, then $\mu\circ u$ is a $J$-holomorphic section contained in $\mathcal{M}_{\omega_{X,0}}(k_\infty;k_1, k_2)$, and vice versa.
\end{proof}

\begin{lemma} \label{automatic transversality}
For any $u\in\mathcal{M}_{\omega}(k_\infty;k_1, k_2)$, $\ind(u)=1$ and $u$ is cut out transversely.
\end{lemma}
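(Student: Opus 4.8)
The plan is to compute the Fredholm index of an arbitrary $u \in \mathcal{M}_\omega(k_\infty; k_1, k_2)$ directly from the index formula and then to establish transversality via an automatic transversality criterion, exploiting that the relevant curves are sections (hence of a special topological type) and that the asymptotic orbits are nondegenerate after a small perturbation, or Morse-Bott in the unperturbed case. First I would pin down the trivialization $\tau$ of $\Ver$ along each of the three asymptotic orbit families: over the twist region $\Ver$ is canonically identified with $TN = \R_x \oplus \R_y$, and the orbit at $x = k/\ell$ (with $\ell \in \{m,n,m+n\}$) has linearized return map conjugate to a shear, so with respect to this $\tau$ its Conley-Zehnder index is $0$ in the Morse-Bott (degenerate) sense — or, after the standard perturbation, splits into an elliptic orbit with $\CZ_\tau = -1$ and a hyperbolic one with $\CZ_\tau = 0$, exactly as recorded in Section \ref{the product}. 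Next I would compute $\langle c_1^\tau(\Ver), [u]\rangle$: since $u$ is a section of the \emph{trivial} bundle $\bar X_D = B_0 \times \R_x \times S^1_y$, and $\Ver$ is the pullback of a trivial complex line bundle on the fiber $\R_x \times S^1_y$ whose trivialization $\tau$ at the ends is the obvious one, this relative Chern number is $0$ (the section $\psi = \partial_x$ of $\Ver$ is nonvanishing and constant with respect to $\tau$ on all three ends, so it has no zeros). Plugging into
\begin{equation}
\ind(u) = 1 + 2\langle c_1^\tau(\Ver),[u]\rangle + \CZ_\tau(\alpha_1) + \CZ_\tau(\alpha_2) - \CZ_\tau(\beta) - 1 + 1
\end{equation}
— more precisely into the formula $\ind(u) = -1 + 2\langle c_1^\tau(\Ver),[u]\rangle + \sum\CZ_\tau(\alpha_i) - \CZ_\tau(\beta)$ adapted to the coproduct's two negative ends — I would check that the Conley-Zehnder contributions cancel in each matching pair (elliptic-elliptic, etc.) so that $\ind(u) = 1$. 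In the Morse-Bott setting one uses the Morse-Bott index with the appropriate $\pm \tfrac12$ shifts for the $S^1$-families at each of the three ends, and these again sum to give total index $1$ for the moduli space with free ends.

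For transversality, the key point is that these are holomorphic sections in a $4$-dimensional fiberwise cobordism, so the vertical normal bundle $N_u = u^*\Ver$ is a complex line bundle over $B_0 \cong S^2 \setminus \{3 \text{ pts}\}$. The plan is to invoke an automatic transversality result in the spirit of Hofer-Lizan-Sikorav / Wendl for punctured curves: the normal Cauchy-Riemann operator $D_u^N$ acting on sections of $N_u$ with the prescribed asymptotic decay is surjective provided a numerical inequality relating $\ind D_u^N$, the genus, and the parities of the asymptotic orbits holds. Here the domain is a thrice-punctured sphere (genus $0$), the curve is somewhere injective (it is a graph over $B_0$, so embedded), and $\ind D_u^N = \ind(u) = 1 > 0$; the asymptotic orbits at the ends are elliptic (even) except for at most the hyperbolic ones, and I would verify the hypothesis of Wendl's criterion (roughly, $\ind D_u^N > \text{(number of even/hyperbolic ends)} - 2 + 2g + \#\Gamma_{\mathrm{even}}$ type bound) is met in each case, possibly after noting that in the unperturbed Morse-Bott picture one works with the $S^1$-Morse-Bott version of the criterion. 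An alternative, more hands-on route — which I'd fall back on if the automatic transversality bookkeeping gets delicate — is to observe that $J$ restricted to $\Ver$ sends $\partial_x$ to $\partial_y$, so the section equation in the twist region becomes an honest linear Cauchy-Riemann-type equation for $(x(s,t), y(s,t))$ perturbed by the explicit Hamiltonian term $\tfrac12 x^2 \beta$, and one can write down the moduli space and the linearization completely explicitly (this is essentially what Proposition \ref{morse-bott moduli space} will do), reading off surjectivity by inspection.

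The main obstacle I anticipate is the automatic transversality step, specifically matching the parity/asymptotic hypotheses of the Wendl-type criterion in the degenerate Morse-Bott situation, where one must use the correct $S^1$-Morse-Bott normal operator and the correct count of even Reeb orbit families at the punctures; one has to be careful that "$\ind = 1$ with free ends" already accounts for the $S^1$ of reparametrizations at each end. A secondary, purely bookkeeping obstacle is getting every sign and $\tfrac12$-shift right in the index computation across the two conventions (Morse-Bott vs.\ perturbed), but since the final answer $\ind = 1$ is forced by the geometry (the moduli space is visibly $1$-dimensional from the explicit description), this serves mainly as a consistency check. I would therefore structure the written proof as: (1) fix $\tau$ and record $\CZ_\tau$ for all orbit families; (2) show $\langle c_1^\tau(\Ver), [u]\rangle = 0$ using the section $\partial_x$; (3) conclude $\ind(u) = 1$; (4) apply automatic transversality (or the explicit linearization) to conclude $u$ is regular.
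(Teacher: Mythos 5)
Your approach is essentially the same as the paper's: compute the Fredholm index using the index formula with the natural trivialization $\tau$ of $\Ver$ over the Reeb orbits (noting $\langle c_1^\tau(\Ver),[u]\rangle = 0$ because $\Ver$ is a trivial bundle over $\bar X_D$), and then invoke Wendl's automatic transversality criterion from \cite{wendl2010automatic} using that $u$ is an embedded section of genus zero. The paper carries this out concisely: it records the free-end Morse-Bott Conley-Zehnder indices as $\CZ^+_\tau(\alpha) = 0$ and $\CZ^-_\tau(\beta_i) = -1$, plugs into $\ind(u) = 1 + 2\langle c_1^\tau(TX),[C]\rangle + \CZ^+_\tau(\alpha) - \sum \CZ^-_\tau(\beta_i) = -1 + 0 + 1 + 1 = 1$, and concludes with $1 = \ind(u) > c_N(u) + Z(du) = 0 + 0$.

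Where your proposal is weaker than the paper's proof is the treatment of the Morse-Bott Conley-Zehnder indices: you hedge with ``appropriate $\pm\tfrac12$ shifts \ldots sum to give total index 1,'' and separately conflate the perturbed (elliptic/hyperbolic) picture with the unperturbed one. The lemma concerns the Morse-Bott moduli space $\mathcal{M}_\omega(k_\infty;k_1,k_2)$ with all three ends free, so the relevant inputs are the shifted Morse-Bott indices $\CZ^+ = 0$ (positive free end) and $\CZ^- = -1$ (negative free ends), not the perturbed $\CZ \in \{-1,0\}$; your discussion of ``elliptic-elliptic pairs cancelling'' and ``the asymptotic orbits at the ends are elliptic (even) except for at most the hyperbolic ones'' does not quite apply here. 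Similarly your paraphrase of Wendl's criterion double-counts the even punctures; the correct form is $\ind(u) > c_N(u) + Z(du)$ with $2c_N(u) = \ind(u) - 2 + 2g + \#\Gamma_0$, which in our case gives $c_N(u) = 0$ since $g = 0$, $\ind(u)=1$, and exactly one puncture (the positive one) has even parity. These are bookkeeping gaps rather than conceptual errors, and your proposed fallback --- reading off surjectivity directly from the explicit linearization via Lemma \ref{dictionary to 2d} --- would also work, but the paper does not need it.
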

\begin{proof}[Proof of Lemma \ref{automatic transversality}]
%\hl{To Yuan: Need to check this proof! In particular, descriptions of $\CZ^+$ and $\CZ^-$, and the calculation of $c_N+Z$.} 
Similar to what we did in Section \ref{the product}, choose the trivialization $\tau:\Ver\to T(\R_x\times S^1_y)\cong \R^2$. Let $\alpha$ and $\beta_1,\beta_2$ denote the three ends of $u$. The index formula is:
\begin{align}
\begin{split}
\ind(u)&=1 + 2 \langle c_1^\tau(TX),[C]\rangle+ \CZ^+_\tau(\alpha) - \sum\CZ^-_\tau(\beta_i)\\
    &=-1 + \CZ^+_\tau(\alpha) - \sum \CZ^-_\tau(\beta_i)\\
    &=-1+0-(-1)-(-1)=1.
\end{split}
\end{align}

Now the automatic transversality theorem (\cite{wendl2010automatic}, Theorem 1) applies here, because 
\begin{equation}
1=\ind(u)>c_N(u)+Z(du)=0+0.
\end{equation}
\end{proof}

Notice that Lemma \ref{automatic transversality} doesn't require $\omega$ to be $\omega_{X,0}$. Let us consider a 1-parameter family of closed fiberwise symplectic 2-forms $\omega_\lambda$ for $\lambda\in [0,1]$, where:
\begin{enumerate}
    \item $\omega_0=\omega_{X,0}$,
    \item When restricted to each fiber of $\bar{X}_D\to B_0$, we have $\omega_\lambda|_{\textup{fiber}} = dx\wedge dy$,
    \item There is a compact subset of $\bar{X}_D$ outside of which all $\omega_\lambda$, with $ \lambda\in [0,1]$, agree.
\end{enumerate}
Observe that any closed fiberwise symplectic 2-form $\omega$ that agrees with $\omega_{X,0}$ outside of a compact set and restricts to $dx\wedge dy$ on each fiber can be connected to $\omega_{X,0}$ using a family $\omega_\lambda$ described above: one can simply put $\omega_\lambda=\lambda\omega+(1-\lambda)\omega_{X,0}$. For each $\lambda$, let $J_\lambda$ denote the fibration compatible almost complex structure on $\bar{X}_D$ determined by $\omega_\lambda$. As before, let $\mathcal{M}_{\omega_\lambda}(k_\infty;k_1, k_2)$ denote the moduli space of $J_\lambda$-holomorphic sections of $(\bar{X}_D,\omega_\lambda)$ whose ends have $x$-coordinates $\frac{k_\infty}{m+n}, \frac{k_1}{m}$, $\frac{k_2}{n}$, where $k_\infty=k_1+k_2$.

Let $\{\mathcal{M}_{\omega_\lambda}(k_\infty;k_1, k_2)\}|_{\lambda\in [0,1]}$ denote the parametrized moduli space:
\begin{equation}
\{\mathcal{M}_{\omega_\lambda}(k_\infty;k_1, k_2)\}|_{\lambda\in [0,1]}:=\{(u,\lambda)|u\in\mathcal{M}_{\omega_\lambda}(k_\infty;k_1, k_2), \lambda\in[0,1])\}
\end{equation}

We now describe the specific type of deformed fiberwise symplectic 2-form that we will use. Fix a compact subset $K_1\subset B_0$ such that the complement of $K_1$ is contained in the cylindrical ends. For any $R>0$, denote $[-R,R]_x\times S^1_y\subset \R_x\times S^1_y$ by $Q_R$. 

\begin{definition}\label{admissible}
A 1-form $\sigma\in \Omega^1(B_0, C^\infty(\R\times S^1))$ is called \emph{admissible} if there exists some compact subset $K_2\subset B_0$ containing $K_1$, and some $R>0$, such that $\sigma=\beta^{m,n}\cdot \frac{1}{2}x^2$ outside of $K_2\times Q_{R}$. %It is called $S^1$-invariant, if $\sigma$ is $y$-independent, i.e. $\sigma\in \Omega^1(B_0, C^\infty(\R))$.
\end{definition}

For any admissible 1-form $\sigma$, we define the corresponding closed fiberwise symplectic 2-form $\omega$ to be $dx\wedge dy+d\sigma$. The following simple observation asserts that if $\omega=dx\wedge dy+d\sigma$ and $\sigma$ is admissible, then all pseudo-holomorphic sections in $\mathcal{M}_{\omega}(k_\infty;k_1, k_2)$ have a uniform upper bound on the vertical energy (see definition below) and the range of its $x$ component is bounded.

\begin{definition}
Fix an admissible 1-form $\sigma$ and the corresponding almost complex structure $J$. Let $g_J$ denote the metric determined by $\omega=d\sigma+dx\wedge dy$ and $J$. For any smooth map $u:B_0\to \bar{X}_D$, define the vertical energy $E(u)$ to be:
\begin{equation}
    E(u)=\frac{1}{2}\int_{B_0}|\partial_s u-\partial_s^\#|^2_{g_J}+|\partial_t u-\partial_t^\#|^2_{g_J} ds\wedge dt
\end{equation}
where $\partial_s^\#$ and $\partial_t^\#$ are the horizontal lifts of the vector fields $\partial_s$ and $\partial_t$, respectively.
\end{definition}
\begin{remark}
Our definition, written in local conformal coordinates, coincides with that of \cite{mcduff2012j} equation 8.1.8. It's also clear from the definition that a smooth map has zero vertical energy if and only if it is a horizontal section.
\end{remark}

\begin{lemma}\label{vertical energy bound}
Let $\sigma_\lambda|_{\lambda\in [0,1]}$ be a family of admissible 1-form as in Definition \ref{admissible} (where $R$ is assumed to be sufficiently large compared to $k_\infty$, $k_1$ and $k_2$), and $\omega_\lambda=dx\wedge dy+d\sigma_\lambda$ be the corresponding closed fiberwise symplectic 2-form on $\bar{X}_D$. Let $J_{\omega_\lambda}$ denote the fibration-compatible almost complex structure determined by $\omega_{\lambda}$. Then for any $u\in \mathcal{M}_{\omega_\lambda}(k_\infty;k_1, k_2)$,
\begin{enumerate}
    \item $u$ is contained in $\{-2R\le x\le 2R\}$, and
    \item The vertical energy $E(u)$ has a uniform bound.
\end{enumerate}
\end{lemma}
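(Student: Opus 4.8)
The plan is to prove both claims using the local energy inequality (Lemma \ref{local energy lemma}) together with the admissibility hypothesis, which pins down $\omega_\lambda$ outside $K_2 \times Q_R$ and forces a uniform outer behavior. For the first claim, I would argue as follows. Suppose $u \in \mathcal{M}_{\omega_\lambda}(k_\infty; k_1, k_2)$ exits the region $\{|x| \le 2R\}$. Since the asymptotic Reeb orbits of $u$ have $x$-coordinates $\frac{k_\infty}{m+n}, \frac{k_1}{m}, \frac{k_2}{n}$, all of which lie well inside $[-R, R]$ by the assumption that $R$ is large compared to $k_\infty, k_1, k_2$, the section must re-enter; so we can find $\tilde\epsilon$ with $R < \tilde\epsilon < 2R$ (or slightly adjusted so the intersections are transverse) such that $u$ meets both $F_{\tilde\epsilon}$ and $F_{-\tilde\epsilon}$ transversely with nonempty intersection, or the analogous picture near $x = \tilde\epsilon > R$ alone with a compact piece bounded by a single slice. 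In the region $|x| > R$ the form $\omega_\lambda$ equals $\omega_{X,0} = dx \wedge dy + d(\tfrac12 x^2 \beta_{m,n})$ (using $\beta^{m,n}$ for the coproduct bundle), so the computation in Lemma \ref{local energy lemma} applies verbatim there. Running the no-crossing style homological bookkeeping exactly as in the proof of Lemma \ref{coproduct no crossing lemma} — noting that outside $Q_R$ there are no Reeb orbits and hence the relevant slices $C \cap F_{\pm\tilde\epsilon}$ bound compact punctureless subsurfaces of $u$, forcing their homology classes to vanish — then contradicts the strict inequality $p + x(mq_1 + nq_2) > 0$ of Lemma \ref{local energy lemma} for the non-horizontal case, and the horizontal case is excluded exactly as in Lemma \ref{no crossing lemma} since $x$ is locally constant on horizontal sections outside the support of the perturbation. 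Thus $u \subset \{-2R \le x \le 2R\}$.

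For the second claim, the vertical energy $E(u)$ admits the standard identification (see \cite{mcduff2012j} equation 8.1.8) with $\int_{B_0} u^* \omega_\lambda$ — more precisely, for a $J_{\omega_\lambda}$-holomorphic section one has $E(u) = \int_{B_0} u^* \omega_\lambda^{\mathrm{vert}}$, the integral of the vertical part of $\omega_\lambda$, which by Stokes equals a boundary/asymptotic term. I would compute this asymptotic term: since $u$ is asymptotic over the three punctures to Reeb orbits at the fixed $x$-values $\frac{k_\infty}{m+n}, \frac{k_1}{m}, \frac{k_2}{n}$, and since (by claim (1)) $u$ stays in the compact region $\{|x| \le 2R\} \times (\text{a compact part of } B_0 \cup \text{cylindrical ends})$, the integral $\int_{B_0} u^*(d\sigma_\lambda)$ over the cylindrical ends is controlled purely by the asymptotic $x$-values and the fixed form $\sigma_\lambda = \tfrac12 x^2 \beta^{m,n}$ there, while the integral of $dx\wedge dy$ is bounded by the fiber area of $Q_{2R}$. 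Concretely, truncating $B_0$ at large $|s_i|$ and applying Stokes, the contribution splits into $\int u^*(dx\wedge dy)$, which is at most the symplectic area of the compact slab $[-2R, 2R]_x \times S^1_y$ times the (finite, fixed) count of how the section wraps — itself homologically constrained by $k_\infty, k_1, k_2$ — plus the explicit boundary integrals $\frac12 (\tfrac{k_\infty}{m+n})^2 (m+n) - \frac12 (\tfrac{k_1}{m})^2 m - \frac12 (\tfrac{k_2}{n})^2 n$ coming from $\sigma_\lambda$ on the ends. None of these depend on $\lambda$, only on the fixed data $R, k_\infty, k_1, k_2$, so $E(u)$ is uniformly bounded.

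The step I expect to be the main obstacle is making the energy identity and the Stokes argument genuinely rigorous in the non-compact setting: one must justify that $u^* \omega_\lambda$ is integrable, that the truncated boundary integrals converge to the expected asymptotic values as the truncation parameter goes to infinity (this uses the exponential convergence of finite-energy $J$-holomorphic sections to their asymptotic Reeb orbits, which is standard but needs to be invoked), and — more subtly — that the "fiber area" contribution is actually bounded independently of $u$ rather than merely finite for each $u$. The cleanest way to handle the latter is to observe that $\int_{B_0} u^* \omega_\lambda$ is a homological quantity: it depends only on the relative homology class of $u$ rel its asymptotes, and by claim (1) together with the pin-down of $\omega_\lambda$ outside the compact region, that relative class lies in a finite set determined by $(k_\infty, k_1, k_2)$ and the wrapping constraints. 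I would phrase the final estimate in these homological terms to sidestep any $u$-dependence, and only use the analytic input (exponential decay, Stokes) to evaluate the resulting topological pairing. The horizontal-section edge case (where $E(u) = 0$) is trivially within any bound, so no separate treatment is needed there.
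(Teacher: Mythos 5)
Your treatment of claim (1) is essentially the paper's argument: use the fact that outside $Q_R$ the form coincides with $\omega_{X,0}$ and there are no Reeb orbits, so a slice $C\cap F_{x_0}$ with $|x_0|$ large bounds a compact punctureless piece of $u$ and is therefore null-homologous, which then contradicts the strict local energy inequality; horizontal sections are ruled out because $x$ is locally constant outside the perturbed region. That part is fine.

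For claim (2), however, there is a genuine gap. You assert the identity $E(u)=\int_{B_0}u^*\omega_\lambda^{\mathrm{vert}}$ and then apply Stokes to evaluate it via the asymptotics — but Stokes requires a pullback of a \emph{closed} form, i.e.\ $\int u^*\omega_\lambda$, and this is \emph{not} equal to $E(u)$. The correct identity, which is what the paper actually derives, is
\begin{equation}
E(u)=\int_{B_0}u^*\omega_\lambda-\int_{B_0}\omega_\lambda(\partial_s^\#,\partial_t^\#)\,ds\wedge dt,
\end{equation}
where the second integrand is the curvature of the Hamiltonian connection determined by $\omega_\lambda$, evaluated along $u$. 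The first term is indeed a homological/asymptotic quantity controlled by Stokes and the admissibility of $\sigma_\lambda$ (this part you handled correctly, modulo a sloppy "fiber area" remark which you then fixed yourself via the homological observation). But the curvature term depends on $u$ through the composition with $u$, it depends on $\lambda$, and it is not generically zero. Your proof never addresses it. To close the gap you would need the additional input the paper supplies: writing $\sigma_\lambda=F^\lambda ds+G^\lambda dt$ locally, one checks $\omega_\lambda(\partial_s^\#,\partial_t^\#)=\partial_s G^\lambda-\partial_t F^\lambda+\partial_x G^\lambda\,\partial_y F^\lambda-\partial_y G^\lambda\,\partial_x F^\lambda$, and then uses admissibility to see that on the cylindrical ends $F^\lambda\equiv 0$ and $G^\lambda$ is $s$-independent, hence the curvature vanishes there. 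Combined with claim (1), $\omega_\lambda(\partial_s^\#,\partial_t^\#)$ is supported in the compact set $(B_0\setminus Z)\times[-2R,2R]\times S^1\times[0,1]$, which yields the uniform bound on the second integral. Without this step the uniform bound on $E(u)$ does not follow from what you wrote.
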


\begin{proof}[Proof of Lemma \ref{vertical energy bound}]
Assume that there is some $u\in \mathcal{M}_{\omega_\lambda}(k_\infty;k_1, k_2)$ not contained in $\{-2R\le x\le 2R\}$. It's easy to show that such a section cannot be horizontal (otherwise, inside the region $\{R<|x|<2R\}$ the function $x$ would be locally constant, a contradiction). %\hl{why not} 
Outside of $\{-2R\le x\le 2R\}$ the 2-form $\omega$ coincides with $\omega_{X,0}$, so after picking some $x_0$ such that $|x_0|>2R$ and $u$ intersects $\{x=x_0\}$ transversely in a non-empty way, the homology class of the intersection $u\bigcap \{x=x_0\}$ satisfies the local energy inequality
\begin{equation}
p+x_0(mq_1+nq_2)>0
\end{equation}
which contradicts the fact that $u\bigcap \{x=x_0\}$ is null-homologous. For the bullet point 2, we first observe that for a sequence of subdomains $D_k$ exhausting $B_0$, we have
\begin{align}
\begin{split}
\int_{B_0}u^*\omega_\lambda &= \lim_{k\to \infty}\int_{D_k}u^*\omega_\lambda\\
                    &=\lim_{k\to \infty}\int_{\partial D_k}u^*(xdy+\sigma_\lambda)\\
                    &=\lim_{k\to \infty}\int_{\partial D_k}u^*(xdy+\beta^{m,n}\cdot \frac{1}{2}x^2).
\end{split}
\end{align}
And the last limit does not depend on $\lambda$ or $u$. We also observe that given the fact $u\in \mathcal{M}_{\omega_\lambda}(k_\infty;k_1, k_2)$, this term is finite.

Next, we calculate $E(u)$ for a $J$-holomorphic section $u$ (again using local conformal coordinates $(s, t)$):
\begin{align}
\begin{split}
    E(u)&=\frac{1}{2}\int_{B_0}|\partial_s u-\partial_s^\#|^2_{g_J}+|\partial_t u-\partial_t^\#|^2_{g_J} ds\wedge dt\\
    &=\int_{B_0}\omega_\lambda(\partial_s u-\partial_s^\#,\partial_t u-\partial_t^\#)ds\wedge dt\\
    &=\int_{B_0}\omega_\lambda(\partial_s u,\partial_t u)-\omega_\lambda(\partial_s^\#,\partial_t^\#)ds\wedge dt \\
    &=\int_{B_0}u^*\omega_\lambda-\int_{B_0}\omega_\lambda(\partial_s^\#,\partial_t^\#)ds\wedge dt
\end{split}
\end{align}

So it suffices to estimate the term $\int_{B_0}\omega_\lambda(\partial_s^\#,\partial_t^\#)ds\wedge dt$. Write $\sigma_\lambda$ as $F^\lambda(s,t,x,y)ds+G^\lambda(s,t,x,y)dt$, and we have:
\begin{equation}
    \omega_\lambda(\partial_s^\#,\partial_t^\#)=\frac{\partial G^\lambda}{\partial s}-\frac{\partial F^\lambda}{\partial t}+\frac{\partial G^\lambda}{\partial x}\frac{\partial F^\lambda}{\partial y}-\frac{\partial G^\lambda}{\partial y}\frac{\partial F^\lambda}{\partial x}
\end{equation}
Now we simply observe that on the union of cylindrical ends $Z$ where $\sigma_\lambda=\frac{1}{2}x^2\beta^{m,n}$, the term $F^\lambda$ is identically zero and $G^\lambda$ is independent of $s$. Consequently, $\omega_\lambda(\partial_s^\#,\partial_t^\#)$ is compactly supported on
\begin{equation}
    (s,t,x,y,\lambda)\in (B_0-Z)\times [-2R,2R]\times S^1\times [0,1]
\end{equation}
and hence has a uniform bound for all $u\in \mathcal{M}_{\omega_\lambda}(k_\infty;k_1, k_2)$.

\end{proof}

Lemma \ref{vertical energy bound}, together with the usual SFT compactness argument developed in \cite{bourgeois2003compactness} (see also \cite{wendl2016lectures} for a nice account), tells us that for any closed fiberwise symplectic 2-form $\omega=dx\wedge dy+d\sigma$ where $\sigma$ is admissible, the moduli space $\mathcal{M}_{\omega}(k_\infty;k_1, k_2)$ is compact. To see that no SFT type breaking can occur for a sequence of sections in $\mathcal{M}_{\omega}(k_\infty;k_1, k_2)$, we observe that levels in the symplectizations are necessarily cylinders, and such cylinders asymptote to orbits in the same Morse-Bott family for homological reasons. Now such cylinders have zero vertical energy, hence are trivial cylinders. We also observe no bubbles appear, as $\pi_2$ of the bundle is trivial.

More generally, for such $\omega$, if we define the 1-parameter family $\omega_\lambda:=\lambda\omega+(1-\lambda)\omega_{X,0}$, then the parametric moduli space $\{\mathcal{M}_{\omega_\lambda}(k_\infty;k_1, k_2)\}|_{\lambda\in [0,1]}$ is compact as well. %\hl{you need to analyze the breaking}. 
Lemma \ref{automatic transversality} tells us that for such $\omega$, both $\mathcal{M}_{\omega}(k_\infty;k_1, k_2)$ and $\{\mathcal{M}_{\omega_\lambda}(k_\infty;k_1, k_2)\}|_{\lambda\in [0,1]}$ are transversely cut out, so $\{\mathcal{M}_{\omega_\lambda}(k_\infty;k_1, k_2)\}|_{\lambda\in [0,1]}$ is a compact cobordism between two closed 1-dimensional manifolds. Observe also that for each fixed $\lambda' \in [0,1]$, the slice $\mathcal{M}_{\omega_{\lambda'}}(k_\infty;k_1, k_2) \subset \{\mathcal{M}_{\omega_\lambda}(k_\infty;k_1, k_2)\}|_{\lambda\in [0,1]}$ is a closed 1-manifold, so we get the following:

\begin{cor}\label{same moduli}
For any $\omega=dx\wedge dy+d\sigma$ where $\sigma$ is admissible, $\mathcal{M}_{\omega}(k_\infty;k_1, k_2)$ is diffeomorphic to $\mathcal{M}_{\omega_{X,0}}(k_\infty;k_1, k_2)$.
\end{cor}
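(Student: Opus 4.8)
The plan is to connect the given form $\omega$ to $\omega_{X,0}$ by the straight-line path $\omega_\lambda := \lambda\omega + (1-\lambda)\omega_{X,0}$, $\lambda\in[0,1]$, and to identify the resulting parametrized moduli space with a trivial fiber bundle over $[0,1]$. First I would check that this path is admissible data: writing $\omega=dx\wedge dy+d\sigma$, the form $\omega_\lambda$ equals $dx\wedge dy+d\sigma_\lambda$ with $\sigma_\lambda=\lambda\sigma+(1-\lambda)\tfrac12 x^2\beta^{m,n}$, and since both $\sigma$ and $\tfrac12 x^2\beta^{m,n}$ agree with $\tfrac12 x^2\beta^{m,n}$ outside a common compact set $K_2\times Q_R$, so does $\sigma_\lambda$; hence $\sigma_\lambda$ is admissible, uniformly in $\lambda$, and one may take $R$ as large as one likes (in particular $\gg k_\infty,k_1,k_2$). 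Each $\omega_\lambda$ also restricts to $dx\wedge dy$ on the fibers, so Lemma \ref{automatic transversality} applies verbatim to every pair $(u,\lambda)$: each such $u$ has index $1$ and is regular, and the same automatic-transversality input shows the parametrized problem is regular as well, since the vertical linearized operator is already surjective in the $u$-directions alone. Consequently the parametrized moduli space
\[
\mathcal{W} := \{\mathcal{M}_{\omega_\lambda}(k_\infty;k_1,k_2)\}|_{\lambda\in[0,1]}
\]
is a smooth $2$-manifold, and its slice over each $\lambda$ is the $1$-manifold $\mathcal{M}_{\omega_\lambda}(k_\infty;k_1,k_2)$.

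Next I would invoke compactness. Lemma \ref{vertical energy bound} provides, uniformly in $\lambda\in[0,1]$, both a bound on the vertical energy and a confinement of the $x$-coordinate of every $u\in\mathcal{M}_{\omega_\lambda}(k_\infty;k_1,k_2)$ to $\{-2R\le x\le 2R\}$. Feeding this into the SFT compactness theorem of \cite{bourgeois2003compactness}, any sequence in $\mathcal{W}$ has a subsequence converging to a holomorphic building; no bubbling occurs because $\pi_2(\bar{X}_D)=0$, and no nontrivial symplectization level can appear, since such a level would be a cylinder between two orbits in the same Morse-Bott $S^1$-family (by the homological constraint $k_\infty=k_1+k_2$ together with the fact that distinct families lie over distinct values of $x$), hence has zero vertical energy and is a trivial cylinder. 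Therefore $\mathcal{W}$ is compact, each interior slice $\mathcal{M}_{\omega_{\lambda}}(k_\infty;k_1,k_2)$ is closed, and $\partial\mathcal{W}=\mathcal{M}_{\omega_{X,0}}(k_\infty;k_1,k_2)\sqcup\mathcal{M}_{\omega}(k_\infty;k_1,k_2)$ sits over $\lambda\in\{0,1\}$.

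Finally, the projection $\mathcal{W}\to[0,1]$, $(u,\lambda)\mapsto\lambda$, is proper (by compactness) and a submersion (by the parametrized regularity), with boundary mapping to $\{0,1\}$ and boundaryless fibers over the interior; Ehresmann's fibration theorem then shows it is a locally trivial ---and hence, over an interval, globally trivial--- fiber bundle, so all fibers are diffeomorphic, giving $\mathcal{M}_{\omega}(k_\infty;k_1,k_2)\cong\mathcal{M}_{\omega_{X,0}}(k_\infty;k_1,k_2)$. I expect the genuine work to be concentrated in the compactness step: one must rule out every stratum of SFT degeneration for the whole family at once, including at the two endpoints, which is precisely where the uniform estimates of Lemma \ref{vertical energy bound} and the triviality of the Morse-Bott symplectization cylinders are indispensable; the transversality and Ehresmann parts are then routine.
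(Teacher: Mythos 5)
Your proposal is correct and follows essentially the same route as the paper: connect $\omega$ and $\omega_{X,0}$ by the straight-line path $\omega_\lambda$, use Lemma \ref{vertical energy bound} plus SFT compactness to get a compact parametrized moduli space, use automatic transversality (Lemma \ref{automatic transversality}) to see each slice is a transversely cut out closed 1-manifold, and conclude the two boundary fibers are diffeomorphic. The paper phrases the last step as a compact cobordism whose slices are all closed 1-manifolds; you make explicit that this is Ehresmann's theorem applied to the proper submersion $\mathcal{W}\to[0,1]$, but the content is the same.
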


The next observation (Corollary \ref{uniqueness cor}) asserts that $\mathcal{M}_{\omega_{X,0}}(k_\infty;k_1, k_2)$ has at most one component. To get started, let us observe that there is an $S^1$ symmetry of $\mathcal{M}_{\omega_{X,0}}(k_\infty;k_1, k_2)$. In the following, we will view $J$-holomorphic sections $u$ of $(\bar{X}_D,\omega)$ as maps $\bar{u}:B_0\to \R\times S^1$, so it is handy to establish the following Lemma:

\begin{lemma}\label{dictionary to 2d}
Choose a local conformal coordinate $(s,t)$ of $B_0$ and suppose locally 
\[
\sigma =\frac{1}{2}x^2(F(s,t)ds+G(s,t)dt)
\]
\[
\omega_{X,0}=dx\wedge dy+d\sigma.
\]
A map $u:B_0\to (\bar{X}_D,\omega)$ is a
$J$-holomorphic section if and only if the corresponding map $\bar{u}=(x(s,t),y(s,t)):B_0\to \R\times S^1$ in our coordinate system solves the following PDE:
\begin{equation}
\begin{cases}
\frac{\partial x}{\partial t}+\frac{\partial y}{\partial s}+xF=0\\
\frac{\partial y}{\partial t}-\frac{\partial x}{\partial s}+xG=0.
\end{cases}
\end{equation}
\end{lemma}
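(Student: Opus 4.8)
The plan is to work entirely in the local conformal coordinates $(s,t)$ on $B_0$ (so that $j_0\partial_s=\partial_t$) and to verify the equivalence fiberwise. First I would write a section of $\bar{X}_D\to B_0$ as $u(s,t)=(s,t,x(s,t),y(s,t))$, so that
\[
du(\partial_s)=\partial_s+x_s\partial_x+y_s\partial_y,\qquad du(\partial_t)=\partial_t+x_t\partial_x+y_t\partial_y .
\]
Since $J$ is fibration-compatible, the holomorphicity equation $J\circ du=du\circ j_0$ automatically holds after applying $d\pi_X$ (condition (1) of Definition \ref{def:fibration_compatible_J}), and since $J$ preserves both $\Ver$ and $\mathrm{Hor}$ it therefore reduces to its vertical part: writing $du(\partial_s)=\partial_s^\#+v_s$ and $du(\partial_t)=\partial_t^\#+v_t$ with $v_s,v_t\in\Ver$, and noting $J\partial_s^\#=\partial_t^\#$ because $J$ carries the horizontal lift of $\partial_s$ to the horizontal lift of $j_0\partial_s=\partial_t$, the equation becomes simply $v_t=Jv_s$.

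The second step is to compute the horizontal lifts explicitly. With $\sigma=\tfrac12 x^2(F\,ds+G\,dt)$ I would expand
\[
\omega_{X,0}=dx\wedge dy+xF\,dx\wedge ds+xG\,dx\wedge dt+\tfrac12 x^2(G_s-F_t)\,ds\wedge dt,
\]
and then determine $\mathrm{Hor}$ as the $\omega_{X,0}$-orthogonal complement of $\Ver=\mathrm{span}(\partial_x,\partial_y)$. Writing $\partial_s^\#=\partial_s+a\partial_x+b\partial_y$ and $\partial_t^\#=\partial_t+a'\partial_x+b'\partial_y$ and imposing $\omega_{X,0}(\partial_s^\#,\partial_x)=\omega_{X,0}(\partial_s^\#,\partial_y)=0$ and likewise for $\partial_t^\#$, I get $a=a'=0$, $b=-xF$, $b'=-xG$, i.e. $\partial_s^\#=\partial_s-xF\partial_y$ and $\partial_t^\#=\partial_t-xG\partial_y$ — consistent with the formulas already used in the proof of Lemma \ref{mp}. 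Consequently $v_s=x_s\partial_x+(y_s+xF)\partial_y$ and $v_t=x_t\partial_x+(y_t+xG)\partial_y$.

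Finally I would substitute into $v_t=Jv_s$, using that on $\Ver$ the complex structure acts by $J\partial_x=\partial_y$ and $J\partial_y=-\partial_x$, so $Jv_s=-(y_s+xF)\partial_x+x_s\partial_y$. Matching the $\partial_x$- and $\partial_y$-components of $v_t=Jv_s$ gives exactly
\[
x_t+y_s+xF=0,\qquad y_t-x_s+xG=0,
\]
and since every implication above is reversible this yields the claimed ``if and only if''. I do not expect a genuine obstacle here, as this lemma is just a translation dictionary between sections of $\bar{X}_D$ and maps to $\R\times S^1$; the only points requiring care are the bookkeeping of signs and the orientation conventions ($j_0\partial_s=\partial_t$ and the sign of $dx\wedge dy$), together with the short computation identifying the horizontal distribution.
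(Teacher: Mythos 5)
Your proof is correct and takes essentially the same approach as the paper: both compute the horizontal lifts $\partial_s^\# = \partial_s - xF\partial_y$, $\partial_t^\# = \partial_t - xG\partial_y$ and substitute into the holomorphicity condition using $J\partial_x=\partial_y$. Your reduction to $v_t=Jv_s$ is a cosmetic rephrasing of the paper's direct expansion of $J\,du(\partial_s)=du(\partial_t)$, and the explicit $\omega$-orthogonality check of the horizontal lift is a helpful elaboration of what the paper calls ``a simple calculation.''
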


\begin{proof}[Proof of Lemma \ref{dictionary to 2d}]
Let $v^\#$ denote the horizontal lift (with respect to $\omega$) of any vector $v\in TB_0$. A simple calculation shows that:
\begin{equation}
\partial_s^\#=\partial_s-xF\partial_y,\quad \partial_t^\#=\partial_t-xG\partial_y.
\end{equation}
So by definition, $J(\partial_s-xF\partial_y)=\partial_t-xG\partial_y$. Recall that we required that $J$ always sends $\partial_x$ to $\partial_y$, so this shows:
\begin{equation}
J(\partial_s)=\partial_t-xF\partial_x-xG\partial_y.
\end{equation}
Now suppose $u:B_0\to \bar{X}_D$, $(s,t)\mapsto (s,t,x,y)$ is $J$-holomorphic, i.e.
\begin{equation}
J(\partial_s+\frac{\partial x}{\partial s}\partial_x+\frac{\partial y}{\partial s}\partial_y)=\partial_t+\frac{\partial x}{\partial t}\partial_x+\frac{\partial y}{\partial t}\partial_y.
\end{equation}
Combine the above equations and collect the coefficients of $\partial_x$ and $\partial_y$, and we get the desired PDE.
\end{proof}

The following corollary is immediate:
\begin{cor}\label{S1 symmetry}
If $u:B_0\to \bar{X}_D$ is a map given by $(s,t)\mapsto (s,t,x(s,t),y(s,t))$, and $u$ is an element of $\mathcal{M}_{\omega_{X,0}}(k_\infty;k_1, k_2)$, then for any $y_0\in S^1$, $(s,t)\mapsto (s,t,x(s,t),y(s,t)+y_0)$ is also an element of $\mathcal{M}_{\omega_{X,0}}(k_\infty;k_1, k_2)$.
\end{cor}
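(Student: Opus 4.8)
The plan is to deduce this directly from the PDE description in Lemma \ref{dictionary to 2d} and to observe that this PDE is manifestly invariant under the shift $y\mapsto y+y_0$. Concretely, fix a conformal coordinate $(s,t)$ on $B_0$ and write $\sigma=\tfrac{1}{2}x^2\bigl(F(s,t)\,ds+G(s,t)\,dt\bigr)$. By Lemma \ref{dictionary to 2d}, a map $\bar u=(x(s,t),y(s,t))$ is $J$-holomorphic precisely when
\[
\frac{\partial x}{\partial t}+\frac{\partial y}{\partial s}+xF=0,\qquad \frac{\partial y}{\partial t}-\frac{\partial x}{\partial s}+xG=0.
\]
The function $y$ enters this system only through its first derivatives $\partial_s y$ and $\partial_t y$, and $F,G$ depend only on $(s,t)$, not on $y$. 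Hence if $(x,y)$ solves the system, so does $(x,y+y_0)$ for any constant $y_0\in S^1$. Since such conformal charts cover $B_0$, the map $(s,t)\mapsto(s,t,x(s,t),y(s,t)+y_0)$ is again a $J$-holomorphic section of $\bar X_D$.

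It then remains to check that the shifted section lands in the \emph{same} moduli space $\mathcal{M}_{\omega_{X,0}}(k_\infty;k_1,k_2)$. This is immediate because the shift does not change the $x$-coordinate anywhere; in particular the three cylindrical ends still have $x$-coordinates $\tfrac{k_\infty}{m+n}$, $\tfrac{k_1}{m}$, $\tfrac{k_2}{n}$, and (the ends being ``free'' on each Morse--Bott $S^1$-family, as noted in the remark preceding Lemma \ref{shifting lemma}) the end simply lands on the Reeb orbit obtained by rotating the original asymptotic orbit by $y_0$ within that family. Equivalently, one may phrase the argument globally: the diffeomorphism $\mu_{y_0}\colon\bar X_D\to\bar X_D$, $(p,x,y)\mapsto(p,x,y+y_0)$, preserves $\Ver$, fixes $\omega_{X,0}=dx\wedge dy+d(\tfrac12 x^2\beta_{m,n})$ (since $dy$ is translation invariant and $d(\tfrac12 x^2\beta_{m,n})$ does not involve $y$), and respects the normalization $\partial_x\mapsto\partial_y$; hence $\mu_{y_0}^*J=J$, and $u\mapsto\mu_{y_0}\circ u$ is the claimed $S^1$-action on $\mathcal{M}_{\omega_{X,0}}(k_\infty;k_1,k_2)$.

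There is no genuine obstacle here; the statement really is immediate. The only point worth flagging is that one is implicitly using that $\bar X_D=B_0\times\R_x\times S^1_y$ is a genuine product in the $y$-direction, so that the rotation $\mu_{y_0}$ is globally well defined (over $X_D$ itself this would interact with the gluing map on the right end, which is why the argument is run on the extended bundle $\bar X_D$, with Lemma \ref{local energy lemma} guaranteeing that no curves are lost by extending).
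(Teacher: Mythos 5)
Your proof is correct and follows exactly the route the paper intends: Corollary \ref{S1 symmetry} is stated as an immediate consequence of Lemma \ref{dictionary to 2d}, and your observation that the PDE involves $y$ only through its first derivatives (so that adding a constant $y_0$ preserves solutions and, since $x$ is unchanged, the asymptotics) is precisely the reason the paper labels it immediate. The global reformulation via the $\omega_{X,0}$-preserving, $\Ver$-preserving diffeomorphism $\mu_{y_0}$ is a nice supplementary check but adds nothing essential beyond what the local PDE argument already gives.
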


In other words, if $\mathcal{M}_{\omega_{X,0}}(k_\infty;k_1, k_2)\ne \emptyset$, then $S^1$ acts freely on $\mathcal{M}_{\omega_{X,0}}(k_\infty;k_1, k_2)$ by translating the $y$-coordinate. We now show that this $S^1$ action is also transitive.

\begin{lemma}\label{uniqueness}
Suppose $u_1$ and $u_2$ are two different $J$-holomorphic sections in $\mathcal{M}_{\omega_{X,0}}(k_\infty;k_1, k_2)$, and let $\bar{u}_i:B_0\to \R\times S^1$ denote the corresponding maps to the fiber, which we write as $(s,t)\mapsto (x_i,y_i)$. Then $x_1=x_2$, and there is some $y_0\in S^1$ such that $y_2(s,t)=y_1(s,t)+y_0$.
\end{lemma}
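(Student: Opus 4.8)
The plan is to exploit a structural feature of the \textbf{unperturbed} two-form $\omega_{X,0}=dx\wedge dy+d(\frac{1}{2}x^2\beta^{m,n})$: the equation it imposes on the fiber component of a $J$-holomorphic section is \emph{linear}. Fix a local conformal coordinate $(s,t)$ on $B_0$ and write $\beta^{m,n}=F(s,t)\,ds+G(s,t)\,dt$; then Lemma \ref{dictionary to 2d} applies with $\sigma=\frac{1}{2}x^2(F\,ds+G\,dt)$, and since $F,G$ depend only on $(s,t)$ the resulting system is linear in $(x,y)$. Hence if $\bar u_i=(x_i,y_i)$ are the fiber components of $u_1,u_2\in\mathcal{M}_{\omega_{X,0}}(k_\infty;k_1,k_2)$, then $(w,v):=(x_1-x_2,\,y_1-y_2)$ again solves the system of Lemma \ref{dictionary to 2d} in every chart; here $w$ is a genuine $\R$-valued function on $B_0$ and $v$ a genuine $S^1$-valued function, and the system makes sense for $v$ because only its derivatives appear in it. Reading Lemma \ref{dictionary to 2d} backwards, $\tilde u:(s,t)\mapsto(s,t,w(s,t),v(s,t))$ is therefore a $J$-holomorphic section of $\bar{X}_D$. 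Because $u_1$ and $u_2$ are asymptotic to Reeb orbits over the same $x$-values at all three punctures, $w\to 0$ at each end, so $\tilde u$ has three cylindrical ends, each lying over $x=0$.

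The next step is to prove $w\equiv 0$. Suppose not. Then $\tilde u$ is not horizontal, since horizontal sections of $(\bar{X}_D,\omega_{X,0})$ have locally constant $x$-coordinate (this is the computation in the horizontal case of the proof of Lemma \ref{no crossing lemma}), and $w\to 0$ at the ends would then force $w\equiv 0$. After possibly interchanging $u_1$ and $u_2$, we may assume $w$ takes a positive value, so we can choose a regular value $x_0\in(0,\sup_{B_0}w)$; then $\tilde u$ meets $F_{x_0}$ transversely and non-trivially. Let $(p,q_1,q_2)$ denote the homology class of $\tilde u\cap F_{x_0}$, oriented as in Lemma \ref{local energy lemma}. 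Since all three ends of $\tilde u$ lie over $x=0<x_0$, the region $\tilde u\cap F_{(-\infty,x_0]}$ contains all three ends, and running the homological bookkeeping from the proof of Lemma \ref{coproduct no crossing lemma} — with the single slice $F_{x_0}$ playing the role of the ``upper'' slice and the $x=0$ orbit classes at the positive and two negative punctures being $(0,1,1)$, $(0,1,0)$, $(0,0,1)$ — yields
\[
(0,1,0)+(0,0,1)=(0,1,1)+(p,q_1,q_2),
\]
so $(p,q_1,q_2)=(0,0,0)$. On the other hand, Lemma \ref{local energy lemma} applied to the non-horizontal section $\tilde u$ at $F_{x_0}$ (which lies in the unperturbed range, as $\omega_{X,0}$ is unperturbed throughout $\bar{X}_D$) gives $p+x_0(mq_1+nq_2)>0$. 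This contradiction forces $w\equiv 0$, i.e.\ $x_1=x_2$.

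With $x_1=x_2$ in hand, subtracting the two copies of the system of Lemma \ref{dictionary to 2d} (one for $u_1$, one for $u_2$) leaves $\partial_s(y_1-y_2)=\partial_t(y_1-y_2)=0$ in every chart, so $y_1-y_2$ is locally constant; since $B_0$ is connected it equals a fixed element $y_0\in S^1$, which gives the claim (with the sign of $y_0$ absorbed).

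I expect the delicate point to be the middle paragraph, namely verifying that the homology class of $\tilde u\cap F_{x_0}$ really vanishes: this rests on the identity $(0,1,1)=(0,1,0)+(0,0,1)$ among the $x=0$ Reeb-orbit classes at the three punctures, together with the fact that \emph{all three} of $\tilde u$'s ends sit over $x=0$ so that no end contributes a net nonzero class at the slice. One should also confirm that Lemma \ref{local energy lemma} is legitimately invoked for sections of the extended bundle $\bar{X}_D$ (it is, since its proof uses only the local form $\omega_{X,0}=dx\wedge dy+d(\frac{1}{2}x^2\beta)$, which is valid on all of $\bar{X}_D$), and bear in mind that linearity of the fiber equation is special to the unperturbed $\omega_{X,0}$ — a Hamiltonian perturbation term $\lambda(x)h(y)$ would destroy it, which is precisely why this analysis is carried out in the Morse-Bott setting.
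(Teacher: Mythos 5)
Your proof is correct, and it diverges from the paper's at the crucial step of showing $x_1 = x_2$. Both arguments start from the same observation: because $\omega_{X,0}$ yields a \emph{linear} PDE in Lemma~\ref{dictionary to 2d}, the fiberwise difference $w=\bar u_1-\bar u_2$ is itself the fiber component of a $J$-holomorphic section $\tilde u$ of $\bar X_D$, asymptotic over all three punctures to Reeb orbits at $x=0$. From there the paper uses a scaling trick: for any $\zeta\in\R$, the map $\bar u_2 + \pi(\zeta\tilde w)$ again lies in $\mathcal{M}_{\omega_{X,0}}(k_\infty;k_1,k_2)$, so if the $x$-component of $w$ is not identically zero one gets a two-parameter family (scaling together with a constant $y$-shift) inside a moduli space that Lemma~\ref{automatic transversality} guarantees is transversely cut out of dimension one --- a contradiction. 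You instead look at $\tilde u$ head-on: since it is not horizontal (horizontal sections of $(\bar X_D,\omega_{X,0})$ have locally constant $x$, which together with the vanishing asymptotics would force $w\equiv 0$), you can slice it at a regular value $x_0\in(0,\sup w)$, compute that the slice class is $(0,1,0)+(0,0,1)-(0,1,1)=(0,0,0)$ because all three ends sit at $x=0$, and then the local energy inequality $p+x_0(mq_1+nq_2)>0$ gives $0>0$. Your route is somewhat longer and requires the homological bookkeeping, but it buys independence from the automatic transversality statement: it replaces the dimension/transversality argument with a direct energy argument at the level of Lemma~\ref{local energy lemma}. Your closing remark --- that the linearity is special to the unperturbed $\omega_{X,0}$, which is precisely why the enumeration is carried out in the Morse--Bott setting before passing to the nondegenerate case --- is exactly the right thing to flag.
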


\begin{proof}[Proof of Lemma \ref{uniqueness}]
Consider $w:=\bar{u}_1-\bar{u}_2:B_0\to \R\times S^1$, $(s,t)\mapsto (l(s,t), m(s,t))$. By Lemma \ref{dictionary to 2d}, locally the following PDEs are satisfied:
\begin{equation}
\begin{cases}
\frac{\partial l}{\partial t}+\frac{\partial m}{\partial s}+lF=0\\
\frac{\partial m}{\partial t}-\frac{\partial l}{\partial s}+lG=0.
\end{cases}
\end{equation}

Observe that by our assumption, $u_1$ and $u_2$ have the same asymptotics on their cylindrical ends, so it follows that $w$ induces the trivial map on $\pi_1$, hence can be lifted to $\Tilde{w}:B_0\to \R\times \R$, $(s,t)\mapsto (l(s,t), \Tilde{m}(s,t))$, where $(l,\Tilde{m})$ solves the same PDE. Let us denote the covering map $\R\times \R\to \R\times S^1$ by $\pi$.

Now for any $\zeta\in\R$, the map $\bar{u}_2+\pi(\zeta\cdot \Tilde{w})$ solves the same PDE, hence gives an element in $\mathcal{M}_{\omega_{X,0}}(k_\infty;k_1, k_2)$. This implies that $l(s,t)$ is identically zero and hence $\Tilde{m}(s,t)$ is constant, because otherwise,
\begin{equation}
\bar{u}_2+\pi(\zeta\cdot \Tilde{w}+\xi \cdot (0,1))
\end{equation}
solves the same PDE for any $\zeta,\xi\in\R$, giving us a two dimensional family of solutions. This contradicts the fact that $u_2$ is cut out transversely and $\ind(u_2)=1$.
\end{proof}

A corollary of the above discussion is the following:

\begin{cor}\label{uniqueness cor}
For any admissible $\sigma$, the moduli space $\mathcal{M}_{dx\wedge dy+d\sigma}(k_\infty;k_1, k_2)$ is either empty, or diffeomorphic to $S^1$.
\end{cor}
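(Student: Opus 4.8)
The plan is to assemble the three preceding results, Corollary~\ref{same moduli}, Corollary~\ref{S1 symmetry} (together with the remark following it), and Lemma~\ref{uniqueness}, into a one-paragraph argument. First I would invoke Corollary~\ref{same moduli} to reduce to the model case: since $\mathcal{M}_{dx\wedge dy+d\sigma}(k_\infty;k_1,k_2)$ is diffeomorphic to $\mathcal{M}_{\omega_{X,0}}(k_\infty;k_1,k_2)$ for every admissible $\sigma$, it suffices to show that $\mathcal{M}_{\omega_{X,0}}(k_\infty;k_1,k_2)$ is empty or diffeomorphic to $S^1$.

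Next I would observe that, assuming the moduli space is nonempty, Lemma~\ref{automatic transversality} guarantees that every element has Fredholm index $1$ and is cut out transversely, so $\mathcal{M}_{\omega_{X,0}}(k_\infty;k_1,k_2)$ is a smooth $1$-manifold; combined with the compactness discussion following Lemma~\ref{vertical energy bound} (no SFT breaking, no bubbling), it is a compact smooth $1$-manifold, hence a finite disjoint union of circles. The remark after Corollary~\ref{S1 symmetry} provides a free smooth $S^1$-action on this manifold given by translating the $y$-coordinate, and Lemma~\ref{uniqueness} says precisely that any two elements of $\mathcal{M}_{\omega_{X,0}}(k_\infty;k_1,k_2)$ differ by such a translation, so the $S^1$-action is also transitive.

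Finally I would conclude: a free and transitive smooth $S^1$-action on a (nonempty) manifold identifies that manifold with $S^1$ via the orbit map $\theta\mapsto\theta\cdot u_0$ for any chosen basepoint $u_0$, which is a smooth bijection between compact $1$-manifolds and hence a diffeomorphism. Therefore $\mathcal{M}_{\omega_{X,0}}(k_\infty;k_1,k_2)$, and with it $\mathcal{M}_{dx\wedge dy+d\sigma}(k_\infty;k_1,k_2)$, is either empty or diffeomorphic to $S^1$. There is no real obstacle in this step; the only point meriting a line of justification is that the orbit map of a free transitive smooth circle action is a diffeomorphism, which is standard, and one should make sure the cited freeness, transitivity, transversality and compactness statements indeed apply to the fixed asymptotic data $(k_\infty;k_1,k_2)$ under consideration.
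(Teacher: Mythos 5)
Your argument is correct and is essentially the one the paper intends: reduce to $\omega_{X,0}$ via Corollary~\ref{same moduli}, then combine the free $S^1$-action from Corollary~\ref{S1 symmetry} with the transitivity supplied by Lemma~\ref{uniqueness} to identify the nonempty moduli space with $S^1$. The paper simply leaves this assembly implicit (``a corollary of the above discussion''), and your extra appeal to transversality and compactness, while not strictly necessary once one has a free transitive smooth $S^1$-action, is harmless and correct.
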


We now proceed to show that $\mathcal{M}_{dx\wedge dy+d\sigma}(k_\infty;k_1, k_2)$ is not empty. By Lemma \ref{shifting lemma}, we can assume that $k_\infty$, $k_1$ and $k_2$ are all positive. In light of Corollary \ref{same moduli}, it suffices to find \emph{one} special admissible $\sigma$, and show that $\mathcal{M}_{\omega}(k_\infty;k_1, k_2)$ is not empty, where $\omega=dx\wedge dy+d\sigma$. The rest of this subsection describes how one can construct an admissible $\sigma$ with a nonempty $\mathcal{M}_{dx\wedge dy+d\sigma}(k_\infty;k_1, k_2)$.

To this end, we make the following observation. Fix cylindrical ends $Z_\infty$, $Z_1$ and $Z_2$ of $B_0$ outside of $K_1$, each with conformal coordinates $(s_i,t_i)\in [N,\infty)\times S^1)$ (or $(-\infty, -N]\times S^1$), and choose cutoff functions $\chi_i:Z_i\to [0,1]$ such that $1-\chi_i$ are compactly supported. Suppose $v:B_0\to \R\times S^1$ is a smooth map, such that (notice the resemblance to the formulations in \cite{schwarz1995cohomology}):
\begin{enumerate}
    \item The restriction of $v=(x,y)$ to the cylindrical end $Z_\infty$ solves the equation:
    \begin{equation}
    \begin{cases}
    \frac{\partial x}{\partial t_\infty}+\frac{\partial y}{\partial s_\infty}=0\\
    \frac{\partial y}{\partial t_\infty}-\frac{\partial x}{\partial s_\infty}+(m+n)\chi_\infty(s_\infty,t_\infty)x=0
    \end{cases}
    \end{equation}
    \item The restriction of $v=(x,y)$ to the cylindrical end $Z_1$ solves the equation:
    \begin{equation}
    \begin{cases}
    \frac{\partial x}{\partial t_1}+\frac{\partial y}{\partial s_1}=0\\
    \frac{\partial y}{\partial t_1}-\frac{\partial x}{\partial s_1}+m\chi_1(s_1,t_1)x=0
    \end{cases}
    \end{equation}
    \item The restriction of $v=(x,y)$ to the cylindrical end $Z_2$ solves the equation:
    \begin{equation}
    \begin{cases}
    \frac{\partial x}{\partial t_2}+\frac{\partial y}{\partial s_2}=0\\
    \frac{\partial y}{\partial t_2}-\frac{\partial x}{\partial s_2}+n\chi_2(s_2,t_2)x=0
    \end{cases}
    \end{equation}
    \item The restriction of $v=(x,y)$ to the complement of $Z_1\bigcup Z_2\bigcup Z_\infty$ is holomorphic:
    \begin{equation}
    \frac{\partial x}{\partial t}+\frac{\partial y}{\partial s}=\frac{\partial y}{\partial t}-\frac{\partial x}{\partial s}=0
    \end{equation}
    \item The map $v$ approaches the projections (to the fiber $S^1\times \R$) of three Reeb orbits at $x_1=\frac{k_1}{m}, x_2=\frac{k_2}{n}, x_\infty =\frac{k_\infty}{m+n}$ at its corresponding cylindrical ends. Note the Reeb orbits whose projections the map $v$ approaches all live in $S^1$ families. We do not care which orbits in these $S^1$ families $v$ approaches.
\end{enumerate}

Then we can construct an admissible $\sigma$ such that the moduli space $$\mathcal{M}_{dx\wedge dy+d\sigma}(k_\infty;k_1, k_2)$$ is not empty. The reason is that $\Tilde{v}:(s,t)\mapsto (s,t,v(s,t))$ is a smooth section of $\bar{X}_D\to B_0$ with the desired asymptotes, and the image of $v$ is contained in $Q_R$ for some large $R$. If we define an admissible 1-form $\sigma$ such that
\begin{equation}
\sigma = \begin{cases}
         \frac{m+n}{2}x^2 \chi_\infty(s_\infty,t_\infty)dt_\infty,\quad &\text{in}\ Z_\infty\times Q_R \\
         \frac{m}{2}x^2 \chi_1(s_1,t_1)dt_1,\quad &\text{in}\ Z_1\times Q_R \\
         \frac{n}{2}x^2 \chi_2(s_2,t_2)dt_2,\quad &\text{in}\ Z_2\times Q_R \\
         0, &\text{in}\ (B_0-Z_1\bigcup Z_2\bigcup Z_\infty)\times Q_R
         \end{cases}
\end{equation}
Then Lemma \ref{dictionary to 2d} tells us that $\Tilde{v}\in \mathcal{M}_{dx\wedge dy+d\sigma}(k_\infty;k_1, k_2)$.

We now construct $v$ as described above. In the following, inside the cylindrical ends $Z_i$, we will always assume that $\chi_i$'s are $t_i$-independent and monotone, and that:
\begin{equation}
y=\begin{cases}
   -k_\infty t_\infty,\ &\text{in}\ Z_\infty\\
   -k_1 t_1,\ &\text{in}\ Z_1\\
   -k_2 t_2,\ &\text{in}\ Z_2
  \end{cases}
\end{equation}

So inside $Z_i$, $x$ is $t_i$-independent, and the equations simplify to ODE's
\begin{equation}
\frac{\partial x}{\partial s_\infty}+k_\infty-(m+n)\chi_\infty(s_\infty) x=0
\end{equation}
\begin{equation}
\frac{\partial x}{\partial s_1}+k_\infty-m\chi_1(s_1) x=0
\end{equation}
\begin{equation}
\frac{\partial x}{\partial s_2}+k_\infty-n\chi_2(s_2) x=0.
\end{equation}
If we let 
\begin{equation}
a_\infty(s_\infty):=\int_N^{s_\infty}-(m+n)\chi_\infty(\rho)d\rho
\end{equation}
\begin{equation}
a_1(s_1):=\int_{-N}^{s_1}-m\chi_1(\rho)d\rho
\end{equation}
and 
\begin{equation}
a_2(s_2):=\int_{-N}^{s_2}-n\chi_2(\rho)d\rho,
\end{equation}
then the solutions to the above ODEs can be explicitly written down: 
\begin{equation}x(s_\infty)=e^{-a_\infty}(c_\infty-k_\infty\int_N^{s_\infty}e^{a_\infty(\rho)}d\rho)\end{equation}
\begin{equation}x(s_1)=e^{-a_1}(c_1-k_1\int_{-N}^{s_1}e^{a_1(\rho)}d\rho)\end{equation}
\begin{equation}x(s_2)=e^{-a_2}(c_2-k_2\int_{-N}^{s_2}e^{a_2(\rho)}d\rho).
\end{equation}

Based on our assumptions, it's easy to show that for the first expression, there is a unique choice of $c_\infty\in\R$ such that $x(s_\infty)$ stays at $\frac{k_\infty}{m+n}$ when $s_\infty$ is large enough. On the other hand, whatever choice of $c_1$ or $c_2$ is, the second(resp. the third expression) will converge to $\frac{k_1}{m}$ (resp. $\frac{k_2}{n}$) when $s_i$ close enough to $-\infty$. Notice also that near the boundaries of $Z_i$ (i.e. when $s_i$ is close to $\pm N$) the functions $x(s_i)$ are linear with slopes $-k_i$.

For a suitable choice of $c_\infty<c_1=c_2$, it is not hard to construct a holomorphic $k_\infty$-fold branched cover $v=(x,y):B_0-Z_1\bigcup Z_2\bigcup Z_\infty\to [c_\infty, c_1]\times S^1$ such that near $\partial Z_\infty$ (resp. $\partial Z_1$, $\partial Z_2$), $(x,y)=(c_\infty-k_\infty(s_\infty-N), -k_\infty t_\infty)$ (resp. $(c_1-k_1(s_1+N), -k_1 t_1)$ and $(c_2-k_2(s_2+N), -k_2 t_2)$). So for such choices of $c_\infty<c_1=c_2$, the map $v$ glues smoothly with the three solutions on $Z_i$'s. The above discussion shows the following:
\begin{proposition}\label{morse-bott moduli space}
For any admissible $\sigma$, $\mathcal{M}_{dx\wedge dy+d\sigma}(k_\infty;k_1, k_2)$ is diffeomorphic to $S^1$.
\end{proposition}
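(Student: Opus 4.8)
The plan is to prove that $\mathcal{M}_{dx\wedge dy+d\sigma}(k_\infty;k_1,k_2)$ is non-empty; combined with Corollary \ref{uniqueness cor}, which already gives the dichotomy ``empty or diffeomorphic to $S^1$'', this finishes the proof. Two preliminary reductions make the non-emptiness statement tractable. First, by Corollary \ref{same moduli} the moduli spaces $\mathcal{M}_{dx\wedge dy+d\sigma}(k_\infty;k_1,k_2)$ for different admissible $\sigma$ are all diffeomorphic, so it suffices to exhibit \emph{one} admissible $\sigma$ with a non-empty moduli space. Second, by Lemma \ref{shifting lemma} we may apply the shift $(k_\infty,k_1,k_2)\mapsto(k_\infty+m+n,k_1+m,k_2+n)$ as many times as we like, so we may assume $k_\infty,k_1,k_2$ are all strictly positive.

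Next I would build the section by hand in the picture of Lemma \ref{dictionary to 2d}. Fix cylindrical ends $Z_\infty, Z_1, Z_2$ of $B_0$ with conformal coordinates $(s_i,t_i)$ and choose $t_i$-independent monotone cutoffs $\chi_i$ with $1-\chi_i$ compactly supported. Prescribing $y=-k_\infty t_\infty$ on $Z_\infty$, $y=-k_1 t_1$ on $Z_1$, $y=-k_2 t_2$ on $Z_2$ collapses the section equation on the ends to linear ODEs in $x$, which are solved explicitly by the integrating factors $a_\infty,a_1,a_2$. One checks that on $Z_\infty$ there is a unique constant $c_\infty$ for which $x(s_\infty)$ limits to $\tfrac{k_\infty}{m+n}$, while on $Z_1$ (resp. $Z_2$) any constant $c_1$ (resp. $c_2$) yields a solution limiting to $\tfrac{k_1}{m}$ (resp. $\tfrac{k_2}{n}$); moreover near the inner boundary of each $Z_i$ the function $x(s_i)$ is affine with slope $-k_i$. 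It then remains to fill in the complement $B_0\setminus(Z_1\cup Z_2\cup Z_\infty)$: for a suitable choice $c_\infty<c_1=c_2$ one constructs a holomorphic $k_\infty$-fold branched cover $v=(x,y):B_0\setminus(Z_1\cup Z_2\cup Z_\infty)\to[c_\infty,c_1]\times S^1$ matching the affine boundary data on the three boundary circles, and glues it to the three end solutions to obtain a smooth section $\tilde v:(s,t)\mapsto(s,t,v(s,t))$ of $\bar{X}_D$ with the prescribed asymptotics.

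Finally, I would record the admissible $\sigma$ that makes $\tilde v$ holomorphic: take $\sigma$ equal to $\tfrac{m+n}{2}x^2\chi_\infty\,dt_\infty$ on $Z_\infty\times Q_R$, to $\tfrac{m}{2}x^2\chi_1\,dt_1$ on $Z_1\times Q_R$, to $\tfrac{n}{2}x^2\chi_2\,dt_2$ on $Z_2\times Q_R$ (with $R$ large enough that the image of $v$ lies in $Q_R$) and $0$ on the remaining part of $B_0\times Q_R$, extended by $\tfrac12 x^2\beta^{m,n}$ outside a compact set so that it is admissible in the sense of Definition \ref{admissible}. By Lemma \ref{dictionary to 2d}, $\tilde v$ then solves the $J$-holomorphic section equation for the almost complex structure determined by $dx\wedge dy+d\sigma$, so $\mathcal{M}_{dx\wedge dy+d\sigma}(k_\infty;k_1,k_2)$ is non-empty, and the proposition follows from Corollaries \ref{uniqueness cor} and \ref{same moduli}.

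I expect the genuine content, and hence the main obstacle, to be the construction of the holomorphic branched cover on the middle region with the prescribed affine boundary behaviour, together with the verification that the free constants $c_\infty<c_1=c_2$ can be chosen so that everything glues smoothly and the image stays in $[c_\infty,c_1]\times S^1$; the remaining ingredients are either elementary ODE bookkeeping on the ends or direct appeals to the already-established Lemma \ref{shifting lemma}, Corollary \ref{same moduli} and Corollary \ref{uniqueness cor}.
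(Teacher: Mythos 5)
Your proposal follows essentially the same route as the paper: reduce via Corollary \ref{uniqueness cor} and Corollary \ref{same moduli} to producing a single admissible $\sigma$ with a non-empty moduli space, use Lemma \ref{shifting lemma} to make the $k_i$ positive, solve the end ODEs explicitly after prescribing $y$ linearly, and glue in a holomorphic $k_\infty$-fold branched cover over the middle. The paper leaves the existence of the branched cover at the same level of detail you do (``it is not hard to construct''), so your identification of that step as the genuine content is accurate but consistent with the paper's own treatment.
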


\subsection{Morse-Bott theory and enumeration of sections after the perturbation}\label{subsection: MB}
In this subsection we use some Morse-Bott theory to enumerate sections after we replace $\omega_{X,0}$ with $\omega_X$ by adding a small Hamiltonian perturbation that breaks the Morse-Bott degeneracy (see Section \ref{setup}). We know from Theorem \ref{cobordism eta=0} and Theorem \ref{classification for general J} that no holomorphic sections cross from $X_H$  to $X_D$ or vice versa, so we only need to consider what happens to $J$-holomorphic sections that stay in $X_D$ as we break the Morse-Bott degeneracy.

For simplicity let us work instead in $\bar{X}_D$. By abuse of notation, we will use the same letter $\omega_X$ to denote the 2-form on the completion $\bar{X}_D$, and likewise for $\omega_{X,0}$. 
We first recall some conventions for $J$-holomorphic sections whose ends land on Morse-Bott submanifolds. We fix $J$ the fibration-compatible almost complex structure (we will have a bit more to say about choice of $J$ after we describe cascades). In our case all Reeb orbits come in $S^1$ families, corresponding to $x=\frac{k_1}{m},\frac{k_2}{m},\frac{k_1+k_2}{m+n}$ in $\bar{X}_D$. Hence we have tori that are foliated by Reeb orbits (we will call such tori ``Morse-Bott tori'').

Recall for $J$-holomorphic sections ending on Morse-Bott tori, we can consider moduli spaces of sections with ``fixed'' end points and moduli spaces of sections with ``free'' end points. These are conditions we impose on the given cylindrical ends of the holomorphic section. ``Fixed'' end points condition means the given end of the section must land on a specific Reeb orbit in the Morse-Bott torus, whereas the ``free'' end condition means that the given end of a section in the moduli space is allowed to freely move around on the Morse-Bott torus. The dimension of the moduli space of course depends on how many ends are specified as fixed or free. For instance the moduli space we constructed in the previous section, $\mathcal{M}_{dx\wedge dy +d\sigma} (k_\infty;k_1,k_2)$ has all ends free by this convention (we implicitly assumed this in our previous construction).

We shall invoke some standard Morse-Bott theory (which is easily adapted to cobordisms) to figure out how to count our $J$-holomorphic sections. However, the statement of the entire theory is rather cumbersome, instead we will just state the small parts we need. We refer the reader to \cite{YY} for a more detailed account.

We recall that in breaking the Morse-Bott degeneracy via perturbations, each torus of Morse Bott orbits breaks into an hyperbolic orbit $h_k$ at maximum of $h(y)$ and an elliptic orbit $e_k$ at minimum of $h(y)$. These orbits form the generators of the fixed point Floer homology.

For our purposes,  $J$-holomorphic sections of degree one and  Fredholm index zero in  $(\bar{X}_D,\omega_X)$ which we need to count correspond to cascades in $(\bar{X}_D,\omega_{X,0})$. The cascades, generally speaking, take the following form:
\begin{itemize}
    \item There is a main level $u_0: B_0 \rightarrow (\bar{X}_D,\omega_{X,0})$;
    
    \item There are upper levels indexed by $i\in \Z_{>0}$. We write them as $u_{i>0}: S^1 \times \R \rightarrow (S^1 \times \R\times S^1 \times \R,\omega_{X,0})$;
    
    \item There are lower levels indexed by $j\in\mathbb{Z}_{<0}$ labelling the level number, and $k \in \{1,2\}$ labeling which negative puncture  it corresponds to. We write them as:
    $u^{k}_{j<0}: S^1 \times \mathbb{R} \rightarrow (S^1 \times \R\times S^1 \times \R,\omega_{X,0})$.

    \item For each map $u_i$ let $\pi$ denote the projection to the base in the codomain, then $\pi \circ u_i$ is the identity; likewise for $u_j^k$;
    \item Let $ev^\pm(u_i)$ denote the Reeb orbit $u_i$ approaches as $s\rightarrow \pm \infty$, let $\phi_T$ denote the gradient flow of $h(y)$ for time $T$, then there exists $T_i \in (0,\infty)$ so that $\phi_{ T_i}(ev^-(u_i))=ev^+(u_{i-1})$;
    \item For the main level we have numbers $T_1, T_k^0 \in (0,\infty)$ for $ k \in \{1,2\}$ so that:
    \[
    \phi_{ T_1}(ev^-(u_1))=ev^+(u_{0})
    \]
    \[
    \phi_{T^{k}_0}(ev^-(u_0))=ev^+(u^{k}_{-1}).
    \]
    \item For the lower levels, there are numbers $T_j^k \in (0,\infty)$ for $k\in \{1,2\}$ so that $\phi_{T_j^k}(ev^-(u_j^k))=ev^+(u^k_{j-1})$.
\end{itemize}

Generally speaking there are more conditions we can achieve for cascades by choosing generic $J$, however for our case we immediately observe for homological reasons $u_{i\neq0}$ are cylinders, in fact they must all be trivial cylinders by energy considerations, so there is only the main level, which for ease of notation we denote by $u$. Since our entire cascade only has one level, in order to count $u$, it must live in a moduli space of dimension zero.

Thus we arrive at the following description of the cascades we must count:
\begin{proposition} \label{cas}
The (1-level) cascades that we need to count takes the following form:
\begin{itemize}
    \item $u:B_0 \rightarrow (\bar{X}_D,\omega_{X,0})$ is a $J$-holomorphic section.
    \item Let the ends of $B_0$ be labelled $\{1,2,\infty\}$. Then one of the ends in  $\{1,2,\infty\}$ is fixed, the other 2 are free. Hence $u$ belongs in a (transversely cut out) moduli space of index zero. (Fixing one end reduces the virtual dimension by one).
    \item All free ends avoid critical points of $h(y)$ (this can be achieved for generic $J$). If the ends labelled $1,2$ are fixed, then they land on the maximum of $h(y)$. If the $\infty$ end is fixed, it lands on the minimum of $h(y)$.
\end{itemize}
\end{proposition}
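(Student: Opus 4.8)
The plan is to derive Proposition \ref{cas} from the Morse–Bott cascade correspondence of \cite{YY} together with the structural results already in hand. By Corollary \ref{coproduct sections classification}, the index-zero $J$-holomorphic sections of $(X^{m,n},\omega_X)$ that contribute to the coproduct and are not accounted for by $X_H$ lie in $X_D$, hence (as explained in Section \ref{coproduct curves}) in $\bar{X}_D$, and the cascade correspondence matches each such section with a cascade in $(\bar{X}_D,\omega_{X,0})$. As observed just before the statement, every side level of such a cascade is a degree-one cylinder in a symplectization with vanishing wrapping number; by Lemma \ref{symp cylinders} it is contained in $\R\times Y_D$, and since Reeb orbits over distinct $x$-coordinates lie in distinct classes of $H_1(Y_D)$, its two ends lie on a single Morse–Bott torus, on which the action is constant, so the cylinder has zero vertical energy and is therefore trivial by the equality case of the local energy inequality. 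Thus the cascade reduces to a single main level $u:B_0\to(\bar{X}_D,\omega_{X,0})$ together with one gradient trajectory of $h(y)$ attached at each of its three ends, running between the asymptotic orbit $ev_i(u)$ of $u$ and the generator ($e_\bullet$ or $h_\bullet$) assigned to that end.

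Next I would run the index bookkeeping. By Lemma \ref{automatic transversality} the main level $u$ has Morse–Bott index $1$ with all ends free and is cut out transversely, and by Proposition \ref{morse-bott moduli space} it lies in a moduli space diffeomorphic to $S^1$ on which each evaluation map $ev_i$ is, using Corollary \ref{S1 symmetry} and Lemma \ref{uniqueness}, an $S^1$-equivariant diffeomorphism onto the torus of Reeb orbits at the corresponding slice. Substituting $\CZ_\tau(e_\bullet)=-1$ and $\CZ_\tau(h_\bullet)=0$ into the Fredholm index formula, a perturbed section asymptotic to $\gamma_z$ at the positive end and to $\gamma_x,\gamma_y$ at the two negative ends has index $-1+\CZ_\tau(\gamma_z)-\CZ_\tau(\gamma_x)-\CZ_\tau(\gamma_y)$; writing $\mu\in\{0,1\}$ for the Morse index of the corresponding critical point of $h$ (so $\mu(e_\bullet)=0$, $\mu(h_\bullet)=1$), index zero is equivalent to $\mu(\gamma_z)=\mu(\gamma_x)+\mu(\gamma_y)$. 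The only solutions are $(\gamma_z;\gamma_x,\gamma_y)=(e_\bullet;e_\bullet,e_\bullet)$ and the configurations in which $\gamma_z=h_\bullet$ and exactly one of $\gamma_x,\gamma_y$ is hyperbolic.

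It remains to turn this into the cascade count. In the cascade moduli space the trajectory attached at the positive end imposes $ev_\infty(u)\in W^u(\gamma_z)$ and the one attached at a negative end imposes $ev_i(u)\in W^s(\gamma_i)$, the (un)stable manifolds being taken for the negative gradient flow of $h(y)$. For the perfect Morse function $h$ on $S^1$ one has $W^u(\min)=\{\min\}$ and $W^s(\max)=\{\max\}$, of codimension one, while $W^u(\max)$ and $W^s(\min)$ are $S^1$ with a point removed, of codimension zero; hence among the three matching conditions exactly those with $\gamma_z$ elliptic, or with an output orbit hyperbolic, are codimension one. Summing over either admissible configuration above, there is always exactly one codimension-one condition: at the positive end when $\gamma_z=e_\bullet$, and at the negative end carrying the hyperbolic orbit otherwise. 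Since the $ev_i$ are diffeomorphisms, this single condition cuts the $S^1$ of main levels down to one transversely cut-out point, the remaining two ends being free; this is precisely the assertion that one end is fixed and two are free, the fixed end landing on the minimum of $h$ when it is the $\infty$-end and on the maximum when it is a negative end. Finally, for generic tame $J$ the evaluation maps of this rigid main level avoid the finitely many critical orbits, which gives the statement that the free ends avoid the critical points of $h(y)$.

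The step I expect to be the main obstacle is matching the dimension of the perturbed moduli space with that of the cascade moduli space — in particular checking that no extra gluing parameter is introduced and that the ``fixed'' versus ``free'' codimension count above is exactly the one produced by the gluing theorem — which is the part that must be imported from the Morse–Bott machinery of \cite{YY}. The energy argument removing the side levels and the rigidity together with the $S^1$-symmetry of $\mathcal{M}_{\omega_{X,0}}(k_\infty;k_1,k_2)$ are what make the remaining count as clean as stated.
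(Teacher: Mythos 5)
Your argument is correct and follows the same overall strategy as the paper: defer the cascade/section correspondence to \cite{YY}, argue that all side levels of the cascade are trivial cylinders (by the homology-class/energy argument already used in the paper for SFT compactness of the Morse--Bott moduli spaces), conclude the cascade has only a main level whose moduli space must be rigid, and then match the resulting ``one fixed end, two free ends'' constraint with the elliptic/hyperbolic dictionary. The paper states this quite tersely, essentially deferring to Lemma \ref{automatic transversality}, Proposition \ref{morse-bott moduli space}, and \cite{YY}; you fill in the index bookkeeping (checking via $\CZ_\tau(e)=-1$, $\CZ_\tau(h)=0$ which end configurations give Fredholm index zero) and the Morse-theoretic codimension count on the $S^1$ of main levels. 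Two small remarks: (i) the appeal to Lemma \ref{symp cylinders} is not needed, since the cascade by construction lives entirely in $\bar X_D$, so its side levels are automatically in $\R\times Y_D$; (ii) the rigidity conclusion relies on the fact that the evaluation maps are diffeomorphisms from the $S^1$ family of Proposition \ref{morse-bott moduli space} onto the Morse--Bott torus, which is exactly what Corollary \ref{S1 symmetry} and Lemma \ref{uniqueness} give and which you correctly cite. The obstacle you flag---that the virtual-dimension drop under ``fixing'' an end agrees with the gluing theorem---is indeed the part the paper also punts to \cite{YY}, so you have located the same load-bearing input.
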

That the cascades above are the ones we need to count to compute the co-product map is supplied by the following correspondence theorem:
\begin{proposition}\label{proposition:correspondence}
Given a perfect Morse function $h(y)$, we make the Morse perturbation smaller by rescaling it to be $\bar{\delta} h(y)$, where $\bar{\delta}>0$ is a small positive real number. For small enough $\bar{\delta} >0$, there is a 1-1 correspondence between $J$-holomorphic sections in the nondegenerate case and $J$ holomorphic cascades. Given a cascade $u$ of the form described in the previous proposition, for the positive end of $u$, if it is a free end we assign the generator $h_k$, and if it a fixed end we assign it the generator $e_k$. We reverse the assignments for the two negative ends $1,2$. Then each cascade gives rise bijectively to one $J$ holomorphic section beginning and ending at the generators we assigned to the ends of cascade.
\end{proposition}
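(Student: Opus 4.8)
The plan is to combine the explicit description of the unperturbed moduli spaces from Proposition \ref{morse-bott moduli space} with a Morse--Bott correspondence (gluing) theorem, specialized to the case at hand. First I would record the structural simplification already in place: by Proposition \ref{cas} and the energy/homological argument preceding it, every cascade we must count consists of a single level, namely a degree-one $J$-holomorphic section $u:B_0\to(\bar X_D,\omega_{X,0})$, all auxiliary cylindrical levels being trivial cylinders over orbits in one Morse--Bott torus. Such a $u$ therefore lies in one of the moduli spaces $\mathcal{M}_{\omega_{X,0}}(k_\infty;k_1,k_2)$, which by Corollaries \ref{S1 symmetry}, \ref{uniqueness cor} and Proposition \ref{morse-bott moduli space} is diffeomorphic to $S^1$, with the circle acting freely and transitively by $y$-translation and with the ambient transversality guaranteed by Lemma \ref{automatic transversality}.

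Next I would analyze the evaluation maps. For each end $\ast\in\{\infty,1,2\}$ there is a map $ev_\ast:\mathcal{M}_{\omega_{X,0}}(k_\infty;k_1,k_2)\to S^1$ sending a section to the (simply covered, since $u$ is a section) Reeb orbit in the relevant Morse--Bott torus that it is asymptotic to. Because a global $y$-translation shifts the asymptotic orbit at every end by the same amount, $ev_\ast$ is $S^1$-equivariant for the free transitive action on the source; combined with Lemma \ref{uniqueness} (two sections with a common asymptotic coincide), this forces $ev_\ast$ to be a diffeomorphism of circles. I would then unwind the cascade matching conditions of Proposition \ref{cas}: since all auxiliary levels are trivial cylinders, the chain of gradient-flow conditions collapses to requiring that $ev_\ast(u)$ lie in the appropriate (un)stable manifold of the prescribed critical point of $h(y)$ --- a single point ($\{\max h\}$ or $\{\min h\}$) for the one ``fixed'' end, and the open dense complementary stratum for the two ``free'' ends, which is automatic for generic $J$. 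The fixed end thus imposes exactly one codimension-one constraint $ev_\ast(u)=p_\ast$, which has exactly one solution in $\mathcal{M}_{\omega_{X,0}}(k_\infty;k_1,k_2)$ and is automatically transverse since $ev_\ast$ is a diffeomorphism.

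Then I would pin down the generator dictionary and invoke the correspondence. Matching which critical point the truncated flowline connects to --- $W^u(\min h)=\{\min h\}$ and $W^s(\max h)=\{\max h\}$ are points, while $W^u(\max h)$ and $W^s(\min h)$ are open dense --- shows that a fixed positive end must land on $\min h$ and so is assigned $e_k$, a free positive end is assigned $h_k$, and the roles reverse at the two negative ends, exactly as in the statement; an index bookkeeping with $\CZ_\tau(e)=-1$, $\CZ_\tau(h)=0$ confirms that precisely the configurations with exactly one fixed end have Fredholm index zero (zero fixed ends give an $S^1$-family of index one, two fixed ends give virtual dimension $-1$), and $k_\infty=k_1+k_2$ is forced homologically. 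The Morse--Bott correspondence theorem (adapted to cobordisms, as in \cite{YY}) then asserts that for all sufficiently small $\bar\delta$ the rigid honest $J$-holomorphic sections of $(\bar X_D,\omega_X)$ are in bijection with these rigid single-level cascades, with asymptotics the assigned generators; combined with the previous paragraph each admissible generator pattern is realized by exactly one section, which, together with Corollary \ref{coproduct sections classification} and the PSS identification of the $X_H$-contribution with $\Delta_0$ on $H_*(\Sigma_0;\Z_2)$, yields Theorem \ref{coproduct for multiple twists}.

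The main obstacle is the Morse--Bott correspondence theorem itself: proving that, for $\bar\delta$ small, rigid perturbed sections biject with rigid cascades requires a pregluing construction, a uniform Fredholm/obstruction-bundle analysis near the Morse--Bott tori as $\bar\delta\to 0$, and an SFT-compactness argument excluding all other degenerations --- this is the technical heart, and it is what is deferred to \cite{YY}. A secondary point demanding care is fixing the orientation conventions for the gradient flow in the cascade formalism so that ``fixed positive end $\leftrightarrow e_k$'' and ``fixed negative end $\leftrightarrow h_k$'' are consistent with both the flowline analysis and the Fredholm index count; once those conventions are fixed, the remainder is the soft dimension count sketched above.
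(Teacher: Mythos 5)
Your proposal is correct and takes essentially the same route as the paper: the paper's proof of Proposition \ref{proposition:correspondence} is simply a citation to \cite{YY} (and, for the one-level case relevant here, to the appendix of \cite{colin2022embedded}), and you correctly identify that the pregluing, uniform Fredholm/obstruction analysis as $\bar\delta\to 0$, and SFT-compactness exclusion of other degenerations constitute the technical heart that is deferred to that reference. The supporting bookkeeping you supply --- the reduction to single-level cascades via Proposition \ref{cas}, the evaluation maps being $S^1$-equivariant diffeomorphisms of $\mathcal{M}_{\omega_{X,0}}(k_\infty;k_1,k_2)\cong S^1$ via Lemma \ref{uniqueness} and Proposition \ref{morse-bott moduli space}, and the fixed/free-end-to-$e_k$/$h_k$ dictionary with its index count --- matches the discussion surrounding the proposition in Sections \ref{coproduct curves}--\ref{subsection: MB}, with the minor stylistic difference that the paper achieves ``free ends avoid critical points'' by genericity of the family of Morse functions $\{h_l^i\}$ rather than by perturbing $J$, precisely so it can keep the fibration-compatible $J$ on which the explicit moduli-space construction relies.
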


\begin{proof}
See \cite{YY} for a proof of this statement and how the correspondence works in the general case of multiple level cascades. However since we are only working in the simple case of 1-level cascades, the correspondence theorem that we need is also established in the Appendix of \cite{colin2022embedded} cowritten with Yao.
\end{proof}
\begin{remark}
Note by the above correspondence theorem, the only requirement we need to impose on $J$ in the Morse-Bott case is that all moduli spaces of index zero sections listed above are transversely cut out and that the free ends avoid critical points of $h(y)$. It will be apparent from the paragraph following this remark that the fibration compatible $J$ we chose in Section \ref{coproduct curves}, with the help of automatic transversality, suffices for the purpose of showing the index zero sections are transversely cut out. To ensure the free ends avoid critical points of $h$, instead of further perturbing $J$, we shall instead choose generic $h$. This kind of strategy was also undertaken when Morse-Bott techniques were employed in \cite{ECHT3}.

We also remark that in the correspondence between cascades and $J$ holomorphic sections we need to perturb the almost complex structure from $J$ in the Morse-Bott case in $\bar{X}_D$ to a generic $J_{\bar{\delta}}$ in the non-degenerate case to establish a correspondence between cascades and holomorphic sections, see \cite{YY} for more details. The difference between $J_{\bar{\delta}}$ and $J$ can be taken to be $C^\infty$ small. Hence for small enough $\bar{\delta}>0$ we do not need to worry about this change in almost complex structure since we already established in Section \ref{general no crossing} that for $C^\infty$ small perturbations of $J$ the no-crossing results continues to hold.
\end{remark}
Recall that Proposition \ref{morse-bott moduli space} tells us the moduli space of sections with all three ends free in the Morse-Bott case come in $S^1$ families, and this $S^1$ family is precisely rotation around the $\partial_y$ direction, i.e. rotation along the Morse-Bott tori. To obtain a cascade of Fredholm index zero in the form specified above, it is readily apparent we restrict one of the three ends fixed and the rest two free. All of our cascades we need arise this way.

Instead of perturbing $J$ further to ensure that when we have fixed an end, all the remaining free ends avoid critical points of $h$, we choose generic Morse perturbations $h$. To be specific, we choose perfect Morse functions $\{h_l^i(y)\}$, where $l\in \{1,2,\infty\}$ labels which cylindrical end of the base we are referring to, and $i \in \mathbb{Z}$ refers to the specific Morse-Bott torus in that end. (For example when $l=1$ and $i=1$, this refers to the Morse Bott torus at $x=1$ for the negative end labelled by $k=1$). We then use these functions to break the Morse-Bott degeneracy.

Given some large integer $N$, for generic choice of $\{h_l^i(y)\}$ we can arrange that for tuples $(k_1,k_2,k_\infty)$ satisfying $0\leq k_1,k_2,k_\infty \leq N$ and $k_\infty=k_1+k_2$, if we consider the moduli space $\mathcal{M}_{\omega_{X,0}}(k_\infty;k_1, k_2)$ (here all ends are free) which is diffeomorphic to $S^1$, if any element in this $S^1$ family has one end landing on a critical point of any of the Morse functions in $\{h_l^i(y)\}$, then all the other ends avoid critical points of elements in $\{h_l^i(y)\}$. We can arrange for this to happen by picking generic collection $\{h_l^i(y)\}$ because we only need to consider finite number of moduli spaces.

It is then apparent if a section in the $S^1$ family $\mathcal{M}_{\omega_{X,0}}(k_\infty;k_1, k_2)$ has one end hitting a critical point, it is the only section in this $S^1$ family with an end on that critical point. 
This, combined with the cascade and holomorphic section correspondence (Proposition \ref{proposition:correspondence}) thus produces the computation required for the coproduct structure restricted to $\bar{X}_D$.
\begin{remark}
Even though all our previous proofs (e.g. no-crossing) assumed we used a single Morse function $h(y)$ to break the Morse-Bott degeneracy, one can check that using a collection of Morse functions $\{h_l^i(y)\}$ to break the degeneracy does not make a difference to our previous results.
\end{remark}

Now we are ready to compute the coproduct map induced by $X^{m,n}$. Let $h^{m+n}_{k_\infty}$ (resp. $e^{m+n}_{k_\infty}$) be the hyperbolic (resp. elliptic) orbit over $x=\frac{k_\infty}{m+n}$ at the positive end, $h^m_{k_i}$ (resp. $e^m_{k_i}$) be the hyperbolic (resp. elliptic) orbit over $x=\frac{k_i}{m}$ at the first negative end, and $h^n_{k_i}$ (resp. $e^n_{k_i}$) be the hyperbolic (resp. elliptic) orbit over $x=\frac{k_i}{n}$ at the second negative end. Let $\Delta$ be the coproduct map induced by $(X^{m,n},\omega_X)$. The above discussion shows the following:

\begin{cor}\label{coproduct result in dehn}

\begin{equation}\label{coproduct for e}
\Delta([e^{m+n}_{k_\infty}])=\sum_{k_i\in\{0,1,\cdots,m\},\ k_\infty-k_i\in\{0,1,\cdots,n\}} [e^m_{k_i}]\otimes [e^n_{k_\infty-k_i}]
\end{equation}
\begin{equation}\label{coproduct for h}
    \Delta([h^{m+n}_{k_\infty}])=\sum_{k_i\in\{0,1,\cdots,m\},\ k_\infty-k_i\in\{0,1,\cdots,n\}} [e^m_{k_i}]\otimes [h^n_{k_\infty-k_i}]+[h^m_{i}]\otimes [e^n_{k_\infty-k_i}]
\end{equation}
For all $k_\infty\in\{0,1,\cdots,m+n\}$.
\end{cor}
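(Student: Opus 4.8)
The plan is to assemble the results already established in this section, so that the statement reduces to bookkeeping on top of the cascade/section correspondence. First I would invoke Corollary \ref{coproduct sections classification} together with Theorem \ref{cobordism eta=0} to pass to sections contained in the twist region: for $k_\infty$ in the interior of its range the orbits $e^{m+n}_{k_\infty}, h^{m+n}_{k_\infty}$ and their negative-end counterparts lie strictly inside $X_D$, so only Fredholm index zero sections contained in $X_D$ can have them as asymptotes. By Lemma \ref{local energy lemma} I may work in $\bar{X}_D$, and by Proposition \ref{proposition:correspondence} the sections to be counted for the nondegenerate $\omega_X$ are in bijection with the one-level cascades for $\omega_{X,0}$ described in Proposition \ref{cas}.

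Next I would fix a triple $(k_\infty;k_1,k_2)$ with $k_\infty=k_1+k_2$, observing that the admissible ranges $k_1\in\{0,\dots,m\}$ and $k_2\in\{0,\dots,n\}$ — equivalently the index sets appearing in (\ref{coproduct for e}) and (\ref{coproduct for h}) — are precisely those carrying Reeb orbits in the twist region. By Proposition \ref{morse-bott moduli space} the all-ends-free moduli space $\mathcal{M}_{\omega_{X,0}}(k_\infty;k_1,k_2)$ is a circle; by Corollary \ref{S1 symmetry} and Lemma \ref{uniqueness} the $S^1$ acts on it freely and transitively by translating the $y$-coordinate. The structural fact I would extract is that, under this identification, evaluating a section at any one of its three ends — i.e.\ recording the position, on the corresponding Morse--Bott torus, of the orbit that end limits to — is just a translation of $S^1$, hence a diffeomorphism onto $S^1$.

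Then I would enumerate the rigid cascades. Since the free moduli space has index $1$ (Lemma \ref{automatic transversality}) and fixing an end lowers the index by one, a cascade of index $0$ has exactly one fixed end; fixing one end already pins down the section uniquely via the diffeomorphism above, so a cascade with two fixed ends would be over-determined and, for generic choice of the Morse functions $\{h_l^i(y)\}$, does not occur, matching Proposition \ref{cas}. I would split into three cases according to which end is fixed; in each, the single resulting section has its remaining two ends avoiding all critical points (again by genericity of the $\{h_l^i(y)\}$), so the mod $2$ count is exactly $1$. Applying the generator dictionary of Proposition \ref{proposition:correspondence} — a free (resp.\ fixed) positive end is labelled $h^{m+n}_{k_\infty}$ (resp.\ $e^{m+n}_{k_\infty}$), and the labelling is reversed on the two negative ends — the cases read: if the positive end is fixed, both negative ends are free and the cascade contributes $[e^m_{k_1}]\otimes[e^n_{k_2}]$ to $\Delta([e^{m+n}_{k_\infty}])$; if the first negative end is fixed (positive and the other negative then free) it contributes $[h^m_{k_1}]\otimes[e^n_{k_2}]$ to $\Delta([h^{m+n}_{k_\infty}])$; symmetrically, fixing the second negative end contributes $[e^m_{k_1}]\otimes[h^n_{k_2}]$ to $\Delta([h^{m+n}_{k_\infty}])$. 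Summing over admissible $(k_1,k_2)$ with $k_1+k_2=k_\infty$ then yields (\ref{coproduct for e}) and (\ref{coproduct for h}).

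The step I expect to be the real content, rather than routine, is the claim that fixing one end cuts the $S^1$-family down to exactly one transversely cut out point, so that each coefficient equals $1$ and no second end is accidentally forced onto a critical point: here I would lean on the explicit $S^1$-symmetry to identify the relevant evaluation map with an honest rotation of the circle, and on the genericity of $\{h_l^i(y)\}$ to separate the three evaluation conditions. Everything else — the index count, the homological constraint $k_\infty=k_1+k_2$, the range conditions, and the translation of cascades into pairs of generators — is straightforward once Propositions \ref{morse-bott moduli space}, \ref{cas} and \ref{proposition:correspondence} are in hand.
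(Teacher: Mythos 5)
Your proposal is correct and follows essentially the same route as the paper: pass to $X_D$ via the no-crossing results, invoke the cascade correspondence (Propositions \ref{cas} and \ref{proposition:correspondence}), use Proposition \ref{morse-bott moduli space} and the free transitive $S^1$-action from Corollary \ref{S1 symmetry} and Lemma \ref{uniqueness} to see that fixing any one end selects a unique section, choose the Morse data generically so no two ends simultaneously land on critical points, and then enumerate the three fixed-end cases to read off the coefficients via the generator dictionary. The one point you state slightly more explicitly than the paper --- that the evaluation map at a single end is an honest $S^1$-equivariant diffeomorphism onto the Morse--Bott circle --- is indeed exactly what Lemma \ref{uniqueness} provides, so there is no gap.
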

\begin{remark}
Notice that in equations (\ref{coproduct for e}) and (\ref{coproduct for h}), the homology classes $[e^{m+n}_0]$, $[h^{m+n}_0]$, $[e^{m+n}_{m+n}]$, $[h^{m+n}_{m+n}]$, $[e^m_0]$, $[h^m_0]$, $[e^m_m]$, $[h^m_m]$, $[e^n_0]$, $[h^n_0]$, $[e^n_n]$, $[h^n_n]$ refer to homology classes in $H_*(\Sigma_0;\Z_2)$ in the sense of the decomposition (\ref{HF isomorphism}). It's also not difficult to see that the computations for $\Delta([e^{m+n}_0]$, $\Delta([h^{m+n}_0])$, $\Delta([e^{m+n}_{m+n}])$ and $\Delta([h^{m+n}_{m+n}])$ coincide with that of the coproduct structure $\Delta_0$ on $H_*(\Sigma_0;\Z_2)$.
\end{remark}

Finally, We are able to complete the proof of Theorem \ref{coproduct for multiple twists}.
\begin{proof}[Proof of Theorem \ref{coproduct for multiple twists}]
Let $J$ be an almost complex structure on $X^{m,n}$ sufficiently close to a fibration-compatible one in the sense of Theorem \ref{classification for general J}. By Corollary \ref{coproduct sections classification}, all $J$-holomorphic sections that are counted in the cobordism map are either contained in the twist region $X_D$ or $X_H$. Similar to what we observed in the proof of Theorem \ref{product for multiple twists}, the count of $J$-holomorphic sections contained in $X_H$ precisely corresponds to the coproduct
\begin{equation}\label{easy part of coproduct}
    HF_*(\phi^{m+n}) \supset H_*(\Sigma_0;\Z_2)\xrightarrow{\quad\Delta_0\quad}H_*(\Sigma_0;\Z_2)\otimes H_*(\Sigma_0;\Z_2)\longrightarrow HF_*(\phi^{m})\otimes HF_*(\phi^{n})
\end{equation}
in the sense of the decomposition (\ref{HF isomorphism}). The remaining parts of Theorem \ref{coproduct for multiple twists} readily follows from Corollary \ref{coproduct result in dehn}.

\end{proof}

\printbibliography
\end{document}